\begin{document}

\newtheorem{theorem}{Theorem}[section]
\newtheorem*{theorem*}{Theorem}
\newtheorem*{BGtheorem}{Busch--Gudder Theorem}
\newtheorem*{Mtheorem}{Moln\'ar's Theorem}
\newtheorem*{Ltheorem}{Ludwig's Theorem}
\newtheorem*{Wtheorem}{Wigner's Theorem}
\newtheorem{defi}{Definition}[section]
\newtheorem{corollary}[theorem]{Corollary}
\newtheorem{lemma}[theorem]{Lemma}
\newtheorem{proposition}[theorem]{Proposition}
\newtheorem{conjecture}[theorem]{Conjecture}
\newtheorem{step}[theorem]{Step}
\newtheorem{example}[theorem]{Example}
\newtheorem{remark}[theorem]{Remark}
\newtheorem{question}[theorem]{Question}

\newcommand{\tr}{{\rm tr}}
\newcommand{\supp}{{\rm supp}}
\newcommand{\linspan}{{\rm span}}
\newcommand{\rank}{{\rm rank}}
\newcommand{\diag}{{\rm Diag}}
\newcommand{\Image}{{\rm Im\,}}
\newcommand{\Ker}{{\rm Ker}}
\newcommand\Bdd{{\cal B}}
\newcommand\Effect{{\cal E}}
\newcommand\Proj{{\cal P}}
\newcommand\Sca{\mathcal{SC}}
\newcommand\Borel{{\cal B}}
\newcommand\calC{{\cal C}}
\newcommand\calT{{\cal T}}
\newcommand\FiniteRank{{\cal F}}
\newcommand\R{\mathbb{R}}
\newcommand\N{\mathbb{N}}
\newcommand\C{\mathbb{C}}

\newcommand\E{\ell}
\newcommand\calM{{\cal M}}
\newcommand\pc{\mathfrak{c}}

\title{Coexistency on Hilbert space effect algebras and a characterisation of its symmetry transformations
\thanks{The first author was supported by the Leverhulme Trust Early Career Fellowship, ECF-2018-125. He was also partly supported by the Hungarian National Research, Development and Innovation Office -- NKFIH (K115383).}
\thanks{The second author was supported by grants N1-0061, J1-8133, and P1-0288 from ARRS, Slovenia.}}
\author{Gy\" orgy P\' al Geh\' er \footnote{Department of Mathematics and Statistics, University of Reading, Whiteknights, P.O.~Box 220, Reading RG6 6AX, United Kingdom, G.P.Geher@reading.ac.uk or gehergyuri@gmail.com}, \quad
Peter \v Semrl\footnote{Faculty of Mathematics and Physics, University of Ljubljana,
        Jadranska 19, SI-1000 Ljubljana, Slovenia; Institute of Mathematics, Physics, and Mechanics, Jadranska 19, SI-1000 Ljubljana, Slovenia, peter.semrl@fmf.uni-lj.si}
        }

\date{}

\maketitle

\begin{abstract}
	The Hilbert space effect algebra is a fundamental mathematical structure which is used to describe unsharp quantum measurements in Ludwig's formulation of quantum mechanics.
	Each effect represents a quantum (fuzzy) event.
	The relation of coexistence plays an important role in this theory, as it expresses when two quantum events can be measured together by applying a suitable apparatus.
	This paper's first goal is to answer a very natural question about this relation, namely, when two effects are coexistent with exactly the same effects?
	The other main aim is to describe all automorphisms of the effect algebra with respect to the relation of coexistence.
	In particular, we will see that they can differ quite a lot from usual standard automorphisms, which appear for instance in Ludwig's theorem.
	As a byproduct of our methods we also strengthen a theorem of Moln\'ar.
\end{abstract}
\maketitle

\bigskip
\noindent AMS classification: 47B49, 81R15.

\bigskip
\noindent
Keywords: Hilbert space effect algebra, unsharp quantum measurement, coexistency, automorphism.

%-------------------------------------------------------------------------------------------------------

%\tableofcontents

\section{Introduction}

\subsection{On the classical mathematical formulation of quantum mechanics}
Throughout this paper $H$ will denote a complex, not necessarily separable, Hilbert space with dimension at least 2.
In the classical mathematical formulation of quantum mechanics such a space is used to describe experiments at the atomic scale. 
For instance, the famous Stern--Gerlach experiment (which was one of the firsts showing the reality of the quantum spin) can be described using the two-dimensional Hilbert space $\C^2$. 
In the classical formulation of quantum mechanics, the space of all rank-one projections $\Proj_1(H)$ plays an important role, as its elements represent so-called quantum pure-states (in particular in the Stern-Gerlach experiment they represent the quantum spin).
The so-called transition probability between two pure states $P, Q \in \Proj_1(H)$ is the number $\tr PQ$, where $\tr$ denotes the trace.
For the physical meaning of this quantity we refer the interested reader to e.g.~\cite{Uhlmann}.
A very important cornerstone of the mathematical foundations of quantum mechanics is \emph{Wigner's theorem}, which states the following.

\begin{Wtheorem}
Given a bijective map $\phi\colon \Proj_1(H)\to\Proj_1(H)$ that preserves the transition probability, i.e.~$\tr \phi(P)\phi(Q) = \tr PQ$ for all $P,Q\in\Proj_1(H)$, one can always find either a unitary, or an antiunitary operator $U\colon H\to H$ that implements $\phi$, i.e.~we have $\phi(P) = UPU^*$ for all $P\in\Proj_1(H)$.
\end{Wtheorem}

For an elementary proof see \cite{GeherWigner}. 
As explained thoroughly by Simon in \cite{Simon}, this theorem plays a crucial role (together with Stone's theorem and some representation theory) in obtaining the general time-dependent Schr\"odinger equation that describes quantum systems evolving in time (and which is usually written in the form $i \hslash \tfrac{d}{dt} |\Psi(t)\rangle = \hat{H} |\Psi(t)\rangle$, where $\hslash$ is the reduced Planck constant, $\hat{H}$ is the Hamiltonian operator, and $|\Psi(t)\rangle$ is the unit vector that describes the system at time $t$).

One of the main objectives of quantum mechanics is the study of measurement.
In the classical formulation an observable (such as the position/momentum of a particle, or a component of a particle's spin) is represented by a self-adjoint operator. Equivalently, we could say that an observable is represented by a projection-valued measure $E\colon \Borel_\R \to \Proj(H)$ (i.e.~the spectral measure of the representing self-adjoint operator), where $\Borel_\R$ denotes the set of all Borel sets in $\R$ and $\Proj(H)$ the space of all projections (also called sharp effects) acting on $H$.
If $\Delta$ is a Borel set, then the quantum event that we get a value in $\Delta$ corresponds to the projection $E(\Delta)$.
However, this mathematical formulation of observables implicitly assumes that measurements are perfectly accurate, which is far from being the case in real life.
This was the crucial thought which led Ludwig to give an alternative axiomatic formulation of quantum mechanics which was introduced in his famous books \cite{LudI} and \cite{LudII}.

\subsection{On Ludwig's mathematical formulation of quantum mechanics}
This paper is related to \emph{Ludwig's formulation of quantum mechanics}, more precisely, we shall examine one of the theory's most important relations, called \emph{coexistence} (see the definition later).
The main difference compared to the classical formulation is that (due to the fact that no perfectly accurate measurement is possible in practice) quantum events are not sharp, but fuzzy.
Therefore, according to Ludwig, a quantum event is not necessarily a projection, but rather a self-adjoint operator whose spectrum lies in $[0,1]$.
Such an operator is called an \emph{effect}, and the set of all such operators is called the \emph{Hilbert space effect algebra}, or simply the \emph{effect algebra}, which will be denoted by $\Effect(H)$.
Clearly, we have $\Proj(H)\subset \Effect(H)$.
A fuzzy or unsharp quantum observable corresponds to an effect-valued measure on $\Borel_\R$, which is often called a normalised positive operator-valued measure, see  e.g.~\cite{Gudder} for more details on this. 
We point out that the role of effects and positive operator-valued measures was already emphasised in the earlier book \cite{Davies} of Davies. For some of the subsequent contributions to the theory we refer the reader to the work of Kraus \cite{Kraus} and the recent book of Busch--Lahti--Pellonp\"a\"a--Ylinen \cite{BuLa}.

Let us point out that, contradicting to its name, $\Effect(H)$ is obviously not an actual algebra.
There are a number of operations and relations on the effect algebra that are relevant in mathematical physics.
First of all, the usual \emph{partial order} $\leq$, defined by $A\leq B$ if and only if $\langle Ax, x \rangle \leq \langle B x , x \rangle$ for all $x \in H$, expresses that the occurrence of the quantum event $A$ implies the occurrence of $B$.
We emphasise that $(\Effect(H),\leq)$ is not a lattice, because usually there is no largest effect $C$ whose occurrence implies both $A$ and $B$ (see \cite{Ando, Moreland-Gudder, Tamas} for more details on this).
Note that, as can be easily shown, we have $\Effect(H) = \{A \in \Bdd(H) \colon A=A^*, 0\leq A\leq I\}$, where $\Bdd(H)$ denotes the set of all bounded operators on $H$, $A^*$ the adjoint of $A$, and $I$ the identity operator. 
Hence sometimes the literature refers to $\Effect(H)$ as the operator interval $[0,I]$.

Second, the so called \emph{ortho-complementation} $\perp$ is defined by $A^\perp = I - A$, and it can be thought of as the complement event (or negation) of $A$, i.e.~$A$ occurs if and only if $A^\perp$ does not.

We are mostly interested in the relation of \emph{coexistence}.
Ludwig called two effects coexistent if they can be measured together by applying a suitable apparatus.
In the language of mathematics (see \cite[Theorem IV.1.2.4]{LudI}), this translates into the following definition: 
\begin{defi}
$A, B \in \Effect(H)$ are said to be coexistent, in notation $A \sim B$, if there are effects $E,F,G \in \Effect(H)$ such that
\begin{equation*}
	A = E + G, \quad B = F + G \quad\text{and}\quad E+F+G \in \Effect(H).
\end{equation*}
\end{defi}
We point out that in the earlier work \cite{Davies} Davies examined the simultaneous measurement of unsharp position and momentum, which is closely related to coexistence. It is apparent from the definition that coexistence is a symmetric relation. Although it is not trivial from the above definition, two sharp effects $P,Q \in \Proj(H)$ are coexistent if and only if they commute (see Section \ref{sec:2}), which corresponds to the classical formulation.
We will denote the set of all effects that are coexistent with $A\in \Effect(H)$ by 
\begin{equation*}
	A^\sim := \{ C\in \Effect(H)\colon C\sim A \}, 
\end{equation*}	
and more generally, if $\calM \subset \Effect(H)$, then $\calM^\sim := \cap\{ A^\sim \colon A\in \calM\}$.

The relation of order in $\Effect(H)$ is fairly well-understood.
However, the relation of coexistence is very poorly understood. 
In the case of qubit effects (i.e.~when $\dim H=2$) the recent papers of Busch--Schmidt \cite{BuS}, Stano--Reitzner--Heinosaari \cite{Stano} and Yu--Liu--Li--Oh \cite{Yu} provide some (rather complicated) characterisations of coexistence.
Although there are no similar results in higher dimensions, it was pointed out by Wolf--Perez-Garcia--Fernandez in \cite{Wolf} that the question of coexistence of pairs of effects can be phrased as a so-called semidefinite program, which is a manageable numerical mathematical problem.
We also mention that Heinosaari--Kiukas--Reitzner in \cite{H} generalised the qubit coexistence characterisation to pairs of effects in arbitrary dimensions that belong to the von Neumann algebra generated by two projections.

To illustrate how poorly the relation of coexistence is understood, we note that the following very natural question has not been answered before -- not even for qubit effects:
\begin{center}
\emph{What does it mean for two effects $A$ and $B$ to be coexistent with exactly the same effects?}
\end{center}
As our first main result we answer this very natural question.
Namely, we will show the following theorem, where $\FiniteRank(H)$ and $\Sca(H)$ denote the set off all finite-rank and scalar effects on $H$, respectively.
	
\begin{theorem}\label{thm:CoexSet}
	For any effects $A,B\in \Effect(H)$ the following are equivalent:
	\begin{itemize}
		\item[\textup{(i)}] $B\in\{A, A^\perp\}$ or $A,B \in \Sca(H)$,
		\item[\textup{(ii)}] $A^\sim = B^\sim$.
	\end{itemize}
	Moreover, if $H$ is separable, then the above statements are also equivalent to
	\begin{itemize}
		\item[\textup{(iii)}] $A^\sim\cap\FiniteRank(H) = B^\sim\cap\FiniteRank(H)$.
	\end{itemize}
\end{theorem}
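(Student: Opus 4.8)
The plan is to prove the cycle of implications (i) $\Rightarrow$ (ii) $\Rightarrow$ (iii) $\Rightarrow$ (i), the middle one being trivial (restricting an equality of sets to $\FiniteRank(H)$), and the first one being essentially a direct verification. For (i) $\Rightarrow$ (ii): if $B=A$ there is nothing to prove; if $B=A^\perp$, observe that $C=E+G$, $D=F+G$, $E+F+G\in\Effect(H)$ witnesses $A\sim C$ exactly when $I-C=(I-E-F-G)+F'+\dots$ — more cleanly, one checks from the definition that $A\sim C \Leftrightarrow A^\perp\sim C$ by complementing the decomposition (replacing $E$ by $F$-type blocks appropriately), so $A^\sim=(A^\perp)^\sim$. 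If $A,B\in\Sca(H)$ one shows every effect is coexistent with every scalar effect $\lambda I$: given $C\in\Effect(H)$, the triple $E=\lambda(I-C)$, $F=0$-adjusted, $G=\lambda C$ works after a short check, so $(\lambda I)^\sim=\Effect(H)$ for all $\lambda\in[0,1]$, and hence all scalar effects have the same coexistence set. So the real content is (iii) $\Rightarrow$ (i), assuming $H$ separable (and the non-separable case of (ii) $\Rightarrow$ (i) presumably reduces to the separable one by restricting to suitable separable reducing subspaces, or is handled by the same finite-rank probing argument once one knows which finite-rank effects lie in $A^\sim$).

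The heart of the matter is therefore: \emph{knowing exactly which finite-rank effects are coexistent with $A$ pins down $A$ up to $A\mapsto A^\perp$ and up to the scalar ambiguity}. The natural strategy is to use rank-one effects, or small multiples of rank-one projections, as ``test effects.'' First I would establish a usable criterion for $A\sim tP$ where $P\in\Proj_1(H)$ and $t>0$ is small: intuitively, coexistence with a faint rank-one effect along the line $\C x$ should be governed by how $A$ acts on the two-dimensional-ish picture spanned by $x$ and $Ax$ — one expects $A\sim tP$ for all sufficiently small $t$ iff some compression/commutation-type condition holds, and the set of $P$ for which $A\sim tP$ fails for small $t$ should encode the eigenstructure of $A$ away from scalars. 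Concretely, if $A$ has two distinct eigenvalues $\alpha<\beta$ in $(0,1)$ with eigenvectors $x,y$, I would look for a rank-one (or rank-two) test effect distinguishing $A$ from any $B$ with $B^\sim\cap\FiniteRank(H)=A^\sim\cap\FiniteRank(H)$: show that the family $\{P\in\Proj_1(H): tP\in A^\sim \text{ for all small } t>0\}$ determines the spectral projections of $A$ (at least the eigenspaces for eigenvalues in the open interval $(0,1)$), and separately that $0$ and $1$ eigenspaces, together with the ``in-between'' part, are recovered too. Then $B$ must have the same spectral projections with the same eigenvalues, forcing $B=A$; the lone exception is when complementation $A\mapsto I-A$ leaves the relevant test-data invariant, which is exactly the source of the $A^\perp$ alternative, and total absence of non-scalar structure gives the $\Sca(H)$ alternative.

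The main obstacle will be the fine analysis of coexistence of an arbitrary effect $A$ with a rank-one effect $tP$ — i.e.\ extracting a clean, checkable necessary-and-sufficient condition from the three-operator definition $A=E+G$, $tP=F+G$, $E+F+G\le I$. Since $0\le G\le tP$ forces $G=sP'$-type constraints only loosely (in general $G\le tP$ just means $G$ is a positive operator dominated by $tP$, hence $G=t\,P\!\cdot\!(\cdot)\!\cdot\!P$ restricted, so $G$ itself has rank $\le 1$ supported in $\C x$), the combinatorics actually simplify: $G=c\,P_x$ for some $0\le c\le t$, $F=tP_x-cP_x=(t-c)P_x\ge0$ automatically, and the binding constraint becomes $A-cP_x\ge 0$ together with $A-cP_x + (t-c)P_x = A + (t-2c)P_x \le I$, i.e.\ $A + (t-2c)P_x\le I$ and $A\ge cP_x$ for some $c\in[0,t]$. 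Making this quantitative as $t\to0^+$ — deciding for which unit vectors $x$ such a $c$ exists for all small $t$ — is where the real work lies, and it is also precisely where the geometry of $A$'s spectral decomposition enters; I expect this lemma, once proven, to make the rest of (iii) $\Rightarrow$ (i) a matter of assembling the recovered spectral data and checking the two exceptional cases.
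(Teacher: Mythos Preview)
Your overall direction---probing with rank-one test effects $tP$---matches the paper's, and your derivation of the criterion is nearly right (small slip: the constraint is $E+F+G\le I$, not $E+F\le I$, so one gets $(A-cP)+(t-c)P+cP=A+(t-c)P\le I$; the condition is thus $cP\le A$ and $(t-c)P\le A^\perp$ for some $c\in[0,t]$, i.e.\ $t\le\Lambda(A,P)+\Lambda(A^\perp,P)$, which is the paper's Corollary~\ref{cor:RankOneStrengthFunct}). But there is a genuine gap in where you locate the difficulty. The invariant you propose to extract---the set of $P$ with $tP\in A^\sim$ for all small $t$---is far too coarse: whenever $\sigma(A)\subset(0,1)$, both $A$ and $A^\perp$ are bounded below, the strength-function sum is uniformly positive, and \emph{every} $tP$ with $t$ small lies in $A^\sim$ (cf.\ Corollary~\ref{cor:0-neighbourhood}); your set is then all of $\Proj_1(H)$ and recovers nothing. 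What the rank-one criterion actually delivers is the full threshold function $P\mapsto\Lambda(A,P)+\Lambda(A^\perp,P)$, and (iii) amounts precisely to equality of this function for $A$ and $B$.

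The hard step, which your outline does not touch, is showing that this function determines $A$ up to $A\leftrightarrow A^\perp$. The paper first uses $A^\sim\cap\Proj(H)=A^c\cap\Proj(H)$ to see that $A$ and $B$ commute with the same projections and hence admit a joint block-diagonalisation; restricting to pairs of spectral subspaces reduces everything to the two-eigenvalue model $A=\lambda_1 I_1\oplus\lambda_2 I_2$, $B=\mu_1 I_1\oplus\mu_2 I_2$. The crux (Lemma~\ref{lem:BlockDiagSca}) is then an explicit computation: one writes $\Lambda(A,P_{\cos\alpha\,x_1+\sin\alpha\,x_2})+\Lambda(A^\perp,\cdot)$ as a rational function of $s=\sin^2\alpha$ and proves, via an injectivity argument for the map $(x,y)\mapsto\bigl(\tfrac{(x-y)^2}{x(1-x)},\tfrac{(x-y)^2}{y(1-y)}\bigr)$ on $\{0<y<x<1\}$, that equality of these functions forces $(\mu_1,\mu_2)\in\{(\lambda_1,\lambda_2),(1-\lambda_1,1-\lambda_2)\}$. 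This is where the real work sits, and nothing in your sketch hints at it; the lemma you say ``once proven'' will make the rest routine is in fact the easy part, and the assembly you anticipate is what is delicate. (One smaller point: going from (iii) to (ii) in the separable case is not free either---it needs the SOT-density of $A^\sim\cap\FiniteRank(H)$ in $A^\sim$, the paper's Lemma~\ref{lem:WOTclosed}.)
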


Physically speaking, the above theorem says that the (unsharp) quantum events $A$ and $B$ can be measured together with exactly the same quantum events if and only if they are the same, or they are each other's negation, or both of them are scalar effects.

\subsection{Automorphisms of $\Effect(H)$ with respect to two relations}
Automorphisms of mathematical structures related to quantum mechanics are important to study because they provide the right tool to understand the time-evolution of certain quantum systems (see e.g.~\cite[Chapters V-VII]{LudI} or \cite{Simon}).
In case when this mathematical structure is $\Effect(H)$, we call a map $\phi\colon \Effect(H)\to \Effect(H)$ a \emph{standard automorphism} of the effect algebra if there exists a unitary or antiunitary operator $U\colon H\to H$ that (similarly to Wigner's theorem) implements $\phi$, i.e.~we have
	\begin{equation}\label{eq:stand}
		\phi (A) = UAU^\ast  \qquad (A\in \Effect(H)).
	\end{equation}
Obviously, standard automorphisms are automorphisms with respect to the relations of order:
\begin{equation}\label{eq:ord}\tag{$\leq$}
	A \le B \iff \phi (A) \le \phi (B) \qquad (A,B\in \Effect(H));
\end{equation}
of ortho-complementation:
\begin{equation}\label{eq:perp}\tag{$\perp$}
	\phi (A^\perp ) = \phi (A)^\perp  \qquad (A\in \Effect(H));
\end{equation}
and also of coexistence:
\begin{equation}\label{eq:coex}\tag{$\sim$}
	A \sim B \iff \phi (A) \sim \phi( B) \qquad (A,B\in \Effect(H)).
\end{equation}
One of the fundamental theorems in the mathematical foundations of quantum mechanics states that every ortho-order automorphism is a standard automorphism, which was first stated by Ludwig.
	
\begin{Ltheorem}[1954, Theorem V.5.23 in \cite{LudI}]
	Let $H$ be a Hilbert space with $\dim H \ge 2$. 
	Assume that $\phi \colon \Effect(H) \to \Effect(H)$ is a bijective map satisfying \eqref{eq:ord} and \eqref{eq:perp}.
	Then $\phi$ is a standard automorphism of $\Effect(H)$.
	Conversely, every standard automorphism satisfies \eqref{eq:ord} and \eqref{eq:perp}.
\end{Ltheorem}

We note that Ludwig's proof was incomplete and that he formulated his theorem under the additional assumption that $\dim H \ge 3$. The reader can find a rigorous proof of this version for instance in \cite{CDLL}.
Let us also point out that the two-dimensional case of Ludwig's theorem was only proved in 2001 in \cite{MoP}.

It is very natural to ask whether the conclusion of Ludwig's theorem remains true, if one replaces either \eqref{eq:ord} by \eqref{eq:coex}, or \eqref{eq:perp} by \eqref{eq:coex}.
Note that in light of Theorem \ref{thm:CoexSet}, in the former case the condition \eqref{eq:perp} becomes almost redundant, except on $\Sca(H)$.
However, as scalar effects are exactly those that are coexistent with every effect (see Section \ref{sec:2}), this problem basically reduces to the characterisation of automorphisms with respect to coexistence only -- which we shall consider later on.

In 2001, Moln\' ar answered the other question affirmatively under the assumption that $\dim H \geq 3$. 

\begin{Mtheorem}[2001, Theorem 1 in \cite{Mol0}]%\label{thm:Molnar}
	Let $H$ be a Hilbert space with $\dim H \ge 3$. Assume that $\phi \colon \Effect(H) \to \Effect(H)$ is a bijective map satisfying	\eqref{eq:ord} and \eqref{eq:coex}.
	Then $\phi$ is a standard automorphism of $\Effect(H)$.
	Conversely, every standard automorphism satisfies \eqref{eq:ord} and \eqref{eq:coex}.
\end{Mtheorem}

In this paper we shall prove the two-dimensional version of Moln\'ar's theorem.

\begin{theorem}\label{thm:2dM}
	Assume that $\phi \colon \Effect(\C^2) \to \Effect(\C^2)$ is a bijective map satisfying	\eqref{eq:ord} and \eqref{eq:coex}.
	Then $\phi$ is a standard automorphism of $\Effect(\C^2)$.
	Conversely, every standard automorphism satisfies \eqref{eq:ord} and \eqref{eq:coex}.
\end{theorem}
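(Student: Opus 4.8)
The plan is to leverage the higher-dimensional result (Molnár's Theorem) as much as possible and to handle the genuinely two-dimensional part by hand. Since $\dim\C^2=2$, the quoted Molnár theorem does not apply directly, so the strategy is to reduce the problem to the ortho-order situation of Ludwig's theorem (whose two-dimensional case is available via \cite{MoP}); the task then becomes: show that a bijection $\phi$ of $\Effect(\C^2)$ satisfying \eqref{eq:ord} and \eqref{eq:coex} must also satisfy \eqref{eq:perp}. The converse direction is routine: a standard automorphism $A\mapsto UAU^*$ clearly preserves $\leq$, and it preserves $\sim$ because it respects the decomposition $A=E+G$, $B=F+G$ with $E+F+G\in\Effect(\C^2)$ appearing in the definition of coexistence.

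For the forward direction, I would proceed in the following steps. First, extract from \eqref{eq:ord} alone the standard order-theoretic consequences: $\phi$ fixes or permutes the extreme points of the operator interval $[0,I]$, hence $\phi(\{0,I\})=\{0,I\}$, and $\phi$ maps $\Proj_1(\C^2)$ onto itself (rank-one projections are exactly the minimal nonzero elements among projections, and projections are order-theoretically characterised, e.g.\ as the $A$ with $A\wedge$ and $A^\perp$ behaving suitably, or via the sharpness criteria used in \cite{MoP}). Second, use \eqref{eq:coex} together with the fact, recalled in the excerpt, that $\Sca(\C^2)$ consists precisely of the effects coexistent with \emph{every} effect: since $\phi$ is a bijection preserving $\sim$ in both directions, it must map $\Sca(\C^2)$ onto $\Sca(\C^2)$. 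Third — and this is where Theorem~\ref{thm:CoexSet} enters — for $P\in\Proj_1(\C^2)$ the set $P^\sim$ is determined, and by Theorem~\ref{thm:CoexSet} the only effects $B$ with $B^\sim=P^\sim$ are $P$ and $P^\perp=I-P$; applying $\phi$ and using \eqref{eq:coex} we get $\phi(P)^\sim=\phi(P^\sim)\cap(\cdots)$, forcing $\{\phi(P),\phi(P)^\perp\}=\{\phi(P),\,\text{the other element}\}$, and one deduces that $\phi$ maps the pair $\{P,P^\perp\}$ to a pair $\{Q,Q^\perp\}$. Finally, combine this with the order structure: because $\phi$ respects $\leq$ and sends $\{0,I\}$ to $\{0,I\}$, one checks $\phi(P^\perp)=\phi(P)^\perp$ first on rank-one projections, then propagates it to all projections (each nontrivial projection in $\C^2$ is rank one, so this is immediate here), and then to all of $\Effect(\C^2)$ using that every effect $A$ and its complement $A^\perp=I-A$ are tied together by the order anti-isomorphism $A\mapsto I-A$, which $\phi$ must intertwine once it does so on a separating subset. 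Having established \eqref{eq:perp}, Ludwig's Theorem (two-dimensional case, \cite{MoP}) yields that $\phi$ is standard.

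The main obstacle I anticipate is the third step: deducing \eqref{eq:perp} on \emph{all} of $\Effect(\C^2)$, not merely on projections. Knowing $\phi(P^\perp)=\phi(P)^\perp$ for rank-one $P$ pins down $\phi$ only on a small set, and the relation of coexistence for a general qubit effect $A$ is, as the excerpt stresses, combinatorially delicate (the characterisations in \cite{BuS,Stano,Yu} are complicated). The trick will be to avoid those descriptions and instead argue order-theoretically: one shows that for a general $A$, the complement $A^\perp$ is characterised among effects by an order-plus-coexistence property that $\phi$ preserves — for instance, using that $A^\perp$ is the unique effect $B$ with $A+B=I$, and translating "$A+B=I$" into a statement about how $A^\sim\cap B^\sim$ or the order intervals $[0,A]$ and $[0,B]$ fit together — and then invoking the bijectivity of $\phi$ to transport this characterisation. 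If a clean order/coexistence characterisation of $A\mapsto A^\perp$ proves elusive, the fallback is to run the $\dim\ge3$ argument of \cite{Mol0} and isolate exactly which sub-lemmas fail in dimension two, repairing each using the explicit $2\times 2$ geometry (e.g.\ parametrising effects by the closed Bloch ball and describing coexistence of a scalar-shifted pair of rank-one effects directly). I expect the bulk of the real work to concentrate there, with the reduction to \cite{MoP} and the bookkeeping with $\Proj_1(\C^2)$ and $\Sca(\C^2)$ being comparatively short.
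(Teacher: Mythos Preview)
Your overall plan---reduce to Ludwig's theorem by establishing \eqref{eq:perp}---is exactly the paper's strategy. However, you are significantly underusing Theorem~\ref{thm:CoexSet}, and this is why you perceive an obstacle that does not exist. That theorem applies to \emph{all} effects, not only to rank-one projections: for any non-scalar $A$, the only $B$ with $B^\sim = A^\sim$ are $A$ and $A^\perp$. Since $\phi$ preserves coexistence in both directions, we have $\phi(A)^\sim = \phi(A^\perp)^\sim$; since $\phi$ maps $\Sca(\C^2)$ onto itself (as you note) and is injective, Theorem~\ref{thm:CoexSet} forces $\phi(A^\perp) = \phi(A)^\perp$ for every non-scalar $A$ in one line. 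There is no need to first handle projections and then ``propagate'' via some unspecified separating-subset argument: the general non-scalar case is immediate, and your anticipated ``main obstacle'' simply dissolves.

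What \emph{does} remain---and what you do not address---is the scalar case. From \eqref{eq:ord} you get $\phi(\lambda I) = g(\lambda) I$ for a strictly increasing bijection $g$ of $[0,1]$, and you must still show $g(1-\lambda) = 1-g(\lambda)$. The paper disposes of this with a short order-theoretic trick: for $s < t$, the set $(sI)^{\leq}\cap (tI)^{\geq}\setminus\Sca(\C^2)$ is nonempty and closed under $\perp$ precisely when $t = 1-s$ and $s<\tfrac{1}{2}$. Since $\phi$ already satisfies \eqref{eq:perp} on non-scalar effects and preserves order, it carries this set to $(g(s)I)^{\leq}\cap (g(1-s)I)^{\geq}\setminus\Sca(\C^2)$, forcing $g(1-s) = 1-g(s)$. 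That is the genuine two-dimensional content of the argument; the elaborate fallbacks you sketch (Bloch-ball geometry, repairing Moln\'ar's lemmas) are not needed.
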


Note that Moln\'ar used the fundamental theorem of projective geometry to prove the aforementioned result, therefore his proof indeed works only if $\dim H \ge 3$. 
Here, as an application of Theorem \ref{thm:CoexSet}, we shall give an alternative proof of Moln\'ar's theorem that does not use this dimensionality constraint, hence fill this dimensionality gap in.
More precisely, we will reduce Moln\'ar's theorem and Theorem \ref{thm:2dM} to Ludwig's theorem (see the end of Section \ref{sec:2}).

\subsection{Automorphisms of $\Effect(H)$ with respect to only one relation}

It is certainly a much more difficult problem to describe the general form of automorphisms with respect to only one relation.
Of course, here we mean either order preserving \eqref{eq:ord}, or coexistence preserving \eqref{eq:coex} maps, as it is easy (and not at all interesting) to describe bijective transformations that satisfy \eqref{eq:perp}. 
It has been known for quite some time that automorphisms with respect to the order relation on $\Effect(H)$ may differ a lot from standard automorphisms, although they are at least always continuous with respect to the operator norm.
We do not state the related result here, but only mention that the answer finally has been given by the second author in \cite[Corollary 1.2]{Se0} (see also \cite{SeActa}).

The other main purpose of this paper is to give the characterisation of all automorphisms of $\Effect(H)$ with respect to the relation of coexistence. 
As can be seen from our result below, these maps can also differ a lot from standard automorphisms, moreover, unlike in the case of \eqref{eq:ord} they are in general not even continuous.
		
\begin{theorem}\label{thm:main}
	Let $H$ be a Hilbert space with $\dim H \geq 2$, and $\phi \colon \Effect(H) \to \Effect(H)$ be a bijective map that satisfies \eqref{eq:coex}.
	Then there exists a unitary or antiunitary operator $U\colon H\to H$ and a bijective map $g\colon [0,1] \to [0,1]$ such that we have
	\begin{equation}\label{eq:nonScaPhi}
		\{\phi(A), \phi(A^\perp)\} = \{UAU^\ast, UA^\perp U^\ast\} \qquad (A\in \Effect(H)\setminus \Sca(H))
	\end{equation}
	and
	\begin{equation}\label{eq:ScaPhi}
		\phi (tI) = g(t)I \qquad (t\in [0,1]).
	\end{equation}
	Conversely, every map of the above form preserves coexistence in both directions.
\end{theorem}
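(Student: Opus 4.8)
The plan is to split $\Effect(H)$ into its scalar and non-scalar parts, to pin $\phi$ down on the non-scalar effects up to the unavoidable exchange $A\leftrightarrow A^\perp$, and to show that the scalar part is as unconstrained as the statement allows. Since $\Sca(H)$ consists exactly of the effects that are coexistent with every effect (Section~\ref{sec:2}), both $\phi$ and $\phi^{-1}$ carry $\Sca(H)$ onto itself; restricting $\phi$ therefore produces a bijection $g\colon[0,1]\to[0,1]$ with $\phi(tI)=g(t)I$, which is \eqref{eq:ScaPhi}, and $g$ is otherwise arbitrary (in particular it may be discontinuous). For the non-scalar effects, note first that \eqref{eq:coex} together with the bijectivity of $\phi$ gives $A^\sim=\phi^{-1}\!\big(\phi(A)^\sim\big)$ for every $A$, so $A^\sim=B^\sim\iff\phi(A)^\sim=\phi(B)^\sim$. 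By Theorem~\ref{thm:CoexSet} we have $A^\sim=(A^\perp)^\sim$ for all $A$, hence $\phi(A)^\sim=\phi(A^\perp)^\sim$. If $A$ is non-scalar then $A\neq A^\perp$, so $\phi(A)\neq\phi(A^\perp)$ and both are non-scalar, and Theorem~\ref{thm:CoexSet} forces $\phi(A^\perp)=\phi(A)^\perp$. Thus $\phi$ respects the partition of $\Effect(H)\setminus\Sca(H)$ into the two-element classes $\{A,A^\perp\}$ and induces a bijection $\bar\phi$ on the set of these classes; since Theorem~\ref{thm:CoexSet} also yields $A\sim B\iff A^\perp\sim B\iff A\sim B^\perp$, coexistence descends to a well-defined relation on classes which $\bar\phi$ preserves in both directions.

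It remains to produce a unitary or antiunitary $U$ with $\bar\phi(\{A,A^\perp\})=\{UAU^\ast,UA^\perp U^\ast\}$ for every non-scalar $A$; combined with the previous paragraph this is \eqref{eq:nonScaPhi}. The route I would take is to reconstruct, purely from the coexistence relation, enough of the order/commutation structure of $\Effect(H)$ to apply a known rigidity theorem. The natural intermediate object is the family of projection classes $\{P,P^\perp\}$: these should be singled out by a coexistence-theoretic property of their classes, and on $\Proj(H)$ coexistence coincides with commutativity (Section~\ref{sec:2}), so $\bar\phi$ restricts to a commutativity-type automorphism of the projection lattice. One then invokes a rigidity theorem -- Ludwig's Theorem (or Moln\'ar's Theorem) when $\dim H\ge3$, and Theorem~\ref{thm:2dM} when $\dim H=2$ -- to conclude that the underlying transformation is standard, and finally bootstraps from projections (or from the elementary effects $tP$) to all non-scalar effects to produce $U$.

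I expect this last step to be the crux of the proof. Unlike the order relation, coexistence does not by itself determine comparability -- indeed $A\sim B$ is strictly weaker than $AB=BA$ once $A$ or $B$ fails to be a projection -- so one cannot simply read an ortho-order or commutativity automorphism off $\phi$; the recovery of that structure must proceed indirectly, exploiting the rigidity of Theorem~\ref{thm:CoexSet} and the coincidence of coexistence with commutativity on $\Proj(H)$. Note also that, since $A^\sim=(A^\perp)^\sim$, no argument of this type can distinguish $A$ from $A^\perp$ within its class, which is precisely why \eqref{eq:nonScaPhi} is stated for the unordered pair and why $g$ in \eqref{eq:ScaPhi} remains essentially free.

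For the converse, suppose $\phi$ has the stated form. It is automatically bijective: $U$ induces a bijection of $\Effect(H)$ fixing $\Sca(H)$, $g$ is a bijection of $[0,1]$, and \eqref{eq:nonScaPhi} maps each two-element class $\{A,A^\perp\}$ bijectively onto the two-element class $\{UAU^\ast,UA^\perp U^\ast\}$. Scalars are coexistent with every effect, so $A\sim B\iff\phi(A)\sim\phi(B)$ holds trivially whenever $A$ or $B$ is scalar. If both are non-scalar, then $\phi(A)\in\{UAU^\ast,UA^\perp U^\ast\}$ and $\phi(B)\in\{UBU^\ast,UB^\perp U^\ast\}$; using that coexistence is unchanged by orthocomplementing either argument and that $\operatorname{Ad}(U)$ preserves the decomposition defining coexistence (it is additive and preserves the property $0\le\,\cdot\,\le I$), one gets $\phi(A)\sim\phi(B)\iff UAU^\ast\sim UBU^\ast\iff A\sim B$. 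Hence every map of the stated form preserves coexistence in both directions.
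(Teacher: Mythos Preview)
Your overall architecture is right and matches the paper's: separate off the scalars, show $\phi(A^\perp)=\phi(A)^\perp$ on non-scalars via Theorem~\ref{thm:CoexSet}, identify the projections by a purely coexistence-theoretic criterion, obtain $U$ from the action on projections, and then extend to all non-scalar effects. Your converse argument is also correct. The gap is in the specific rigidity theorems you propose to invoke. Ludwig's theorem requires \eqref{eq:ord} and \eqref{eq:perp}; Moln\'ar's theorem and Theorem~\ref{thm:2dM} both require \eqref{eq:ord} in addition to \eqref{eq:coex}. Your $\phi$ (or $\bar\phi$) preserves only coexistence, not order, and there is no way to manufacture an order-preserving map out of coexistence alone---indeed this is precisely why the conclusion of Theorem~\ref{thm:main} is so much weaker than that of Ludwig or Moln\'ar. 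So none of those theorems applies, even on the projection part.

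What the paper does instead is the following. For $\dim H\ge3$ it first characterises non-trivial projections via a new auxiliary relation $\prec$ built from $\sim$ (Lemma~\ref{lem:ProjChar}), and then applies a separate result on bijections of $\Proj(H)$ preserving \emph{commutativity} in both directions (Theorem~\ref{thm:ProjComm}, which rests on Uhlhorn's theorem, not Ludwig's) to produce $U$. The extension step---your ``bootstrap''---is carried out by showing that $\phi(A^c)=A^c$ and hence $\phi(A^{cc})=A^{cc}$ for every effect $A$, so that $\phi(A)=p_A(A)$ for some polynomial $p_A$ whenever $A$ has finite spectrum; the already-established two-dimensional case of the theorem is then used on $2\times2$ corners to force $p_A(A)\in\{A,A^\perp\}$. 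The two-dimensional case itself is proved first and separately by a lengthy direct analysis (Lemmas~\ref{lem:2dprojchar}--\ref{lem:P3}): it identifies rank-one projections and rank-one effects coexistence-theoretically, shows that $\phi$ preserves the distance $\sin\tfrac{\pi}{4}$ between rank-one projections, and invokes a result from \cite{Ge} (not Theorem~\ref{thm:2dM}) to obtain $U$. In short, the step you flag as the crux genuinely is the crux, and it requires substantially more machinery than a citation of an existing ortho-order automorphism theorem.
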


Observe that in the above theorem if we assume that our automorphism is continuous with respect to the operator norm, then up to unitary-antiunitary equivalence we obtain that $\phi$ is either the identity map, or the ortho-complementation: $A\mapsto A^\perp$.
Also note that the converse statement of the theorem follows easily by Theorem \ref{thm:CoexSet}.
As we mentioned earlier, the description of all automorphisms with respect to \eqref{eq:coex} and \eqref{eq:perp} now follows easily, namely, we get the same conclusion as in the above theorem, except that now $g$ further satisfies $g(1-t) = 1-g(t)$ for all $0\leq t\leq 1$.

\subsection{Quantum mechanical interpretation of automorphisms of $\Effect(H)$}

In order to explain the above automorphism theorems' physical interpretation, let us go back first to Wigner's theorem.
Assume there are two physicists who analyse the same quantum mechanical system using the same Hilbert space $H$, but possibly they might associate different rank-one projections to the same quantum (pure) state.
However, we know that they always agree on the transition probabilities.
Then according to Wigner's theorem, there must be either a unitary, or an antiunitary operator with which we can transform from one analysis into the other (like a ''coordinate transformation'').

For the interpretation of Ludwig's theorem, let us say there are two physicists who analyse the same quantum fuzzy measurement, but they might associate different effects to the same quantum fuzzy event. 
If we at least know that both of them agree on which pairs of effects are ortho-complemented, and which effect is larger than the other (i.e.~implies the occurrence of the other), then by Ludwig's theorem there must exist either a unitary, or an antiunitary operator that gives us the way to transform from one analysis into the other.

As for the interpretation of our Theorem \ref{thm:main}, if we only know that our physicists agree on which pairs of effects are coexistent (i.e.~which pairs of quantum events can be measured together), then there is a map $\phi$ satisfying \eqref{eq:nonScaPhi} and \eqref{eq:ScaPhi} that transforms the first physicist's analysis into the other's.

\subsection{The outline of the paper}
In the next section we will prove our first main result, Theorem \ref{thm:CoexSet}, and as an application, we prove Moln\'ar's theorem in an alternative way that works for qubit effects as well.
This will be followed by Section 3 where we prove our other main result, Theorem \ref{thm:main}, in the case when $\dim H = 2$.
Then in Section 4, using the two-dimensional case, we shall prove the general version of our result.
Let us point out once more that, unless otherwise stated, $H$ is not assumed to be separable.
We will close our paper with some discussion on the qubit case and some open problems in Sections 5--6.

%-------------------------------------------------------------------------------------------------------

\section{Proofs of Theorems \ref{thm:CoexSet}, \ref{thm:2dM}, and Moln\'ar's theorem} \label{sec:2}

We start with some definitions.
The symbol $\Proj(H)$ will stand for the set of all projections (idempotent and self-adjoint operators) on $H$, and $\Proj_1(H)$ will denote the set of all rank-one projections.
The commutant of an effect $A$ intersected with $\Effect(H)$ will be denoted by 
\begin{equation*}
	A^c := \{ C\in \Effect(H) \colon CA=AC\},
\end{equation*}
and more generally, for a subset $\calM\subset \Effect(H)$ we will use the notation $\calM^c := \cap\{ A^c \colon A\in\calM \}$.
Also, we set $A^{cc} := (A^c)^c$ and $\calM^{cc} := (\calM^c)^c$.

We continue with three known lemmas on the structure of coexistent pairs of effects that can all be found in \cite{Se2}. 
The first two have been proved earlier, see \cite{BuS, Mol}.

\begin{lemma}\label{lem:properties}
	For any $A\in \Effect(H)$ and $P \in \Proj(H)$ the following statements hold:
	\begin{itemize}
		\item[\textup{(a)}] $A^\sim = \Effect(H)$ if and only if $A \in \Sca(H)$,
		\item[\textup{(b)}] $P^\sim = P^c$,
		\item[\textup{(c)}] $A^c \subseteq A^\sim$.
	\end{itemize}
\end{lemma}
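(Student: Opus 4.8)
The plan is to prove the three items in the order (c), (b), (a), since each is used in establishing the next.

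For (c) I would simply write down a witnessing decomposition. Given $C\in A^c$, set $G:=AC$, $E:=A(I-C)$ and $F:=(I-A)C$. As $A$ and $C$ commute and are positive, each of $G,E,F$ is a product of commuting positive operators, hence positive, and each is dominated by $I$; so $E,F,G\in\Effect(H)$. A one-line computation gives $E+G=A$ and $F+G=C$, and $E+F+G=A+C-AC$, while the identity $I-(E+F+G)=(I-A)(I-C)\ge 0$ (again a product of commuting positives) shows $E+F+G\in\Effect(H)$. Hence $C\sim A$, i.e. $A^c\subseteq A^\sim$.

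For (b), the inclusion $P^c\subseteq P^\sim$ is the special case $A=P$ of (c). For the converse, suppose $A\sim P$ via effects $E,F,G$ with $A=E+G$, $P=F+G$, $E+F+G\in\Effect(H)$. From $P=F+G$ with $F\ge 0$ we get $0\le G\le P$, and from $E+F+G\le I$ with $F+G=P$ we get $0\le E\le I-P$. I would then invoke the elementary ``squeeze'' fact that $0\le T\le Q$ for a projection $Q$ forces $T$ to commute with $Q$: for $x\in\ker Q$ one has $0\le\langle Tx,x\rangle\le\langle Qx,x\rangle=0$, so $T^{1/2}x=0$ and thus $Tx=0$; hence $T(I-Q)=0$, i.e. $T=TQ$, and taking adjoints $T=QT$. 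Applying this to $G$ with $Q=P$ and to $E$ with $Q=I-P$ shows that $E$ and $G$ both commute with $P$, hence so does $A=E+G$; that is, $A\in P^c$.

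For (a), the implication $A\in\Sca(H)\Rightarrow A^\sim=\Effect(H)$ is immediate from (c), since a scalar effect lies in the commutant of every operator. For the converse, assume $A^\sim=\Effect(H)$. In particular $A\sim P_x$ for every rank-one projection $P_x$ (with $x$ a unit vector), so $A\in P_x^c$ by (b); unwinding $AP_x=P_xA$ gives $Ax=\langle Ax,x\rangle x$, so every unit vector, and hence every vector, is an eigenvector of $A$. Comparing the eigenvalue equations for $x$, $y$ and $x+y$ with $x,y$ linearly independent forces a single eigenvalue, so $A=\lambda I$; and $0\le A\le I$ gives $\lambda\in[0,1]$, i.e. $A\in\Sca(H)$.

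I do not expect any of the three parts to be genuinely hard. The only steps requiring a little care are the verification of the inequalities for the explicit $E,F,G$ in (c) — made painless by the factorisation $I-(E+F+G)=(I-A)(I-C)$ — and the squeeze lemma $0\le T\le Q\Rightarrow TQ=QT=T$ underlying (b), which is really the single fact that drives both (b) and (a). The remainder is routine bookkeeping.
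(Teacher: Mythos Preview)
Your proof is correct in all three parts. The paper itself does not give a proof of this lemma: it is stated as a known result with references to \cite{Se2}, \cite{BuS}, and \cite{Mol}. So there is no proof in the paper to compare against, and your self-contained argument is a genuine addition.

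A few remarks on your choices. For (c), your explicit decomposition $E=A(I-C)$, $F=(I-A)C$, $G=AC$ is the standard one and the factorisation $I-(E+F+G)=(I-A)(I-C)$ is indeed the cleanest way to verify the last inequality. For (b), the ``squeeze'' observation $0\le T\le Q\Rightarrow T=QT=TQ$ is exactly the right tool; this is sometimes phrased as saying that an effect dominated by a projection is supported on the range of that projection. For (a), reducing to (b) via rank-one projections and then using the elementary fact that an operator having every vector as an eigenvector must be scalar is the natural route. Your ordering (c)$\to$(b)$\to$(a) is also the logical one, since each step genuinely feeds into the next.
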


\begin{lemma}\label{lem:dirsum}
	Let $A,B \in \Effect(H)$ so that their matrices are diagonal with respect to some orthogonal decomposition $H = \oplus_{i\in \mathcal{I}} H_i$, i.e.~$A = \oplus_{i\in\mathcal{I}} A_i$ and $B = \oplus_{i\in\mathcal{I}} B_i \in \Effect(\oplus_{i\in\mathcal{I}} H_i)$.
	Then $A\sim B$ if and only if $A_i\sim B_i$ for all $i\in\mathcal{I}$.
\end{lemma}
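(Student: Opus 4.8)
The plan is to verify the two implications directly from the definition of coexistence, treating the ``if'' part by forming direct sums of witnessing triples and the ``only if'' part by compressing a witnessing triple to the diagonal blocks.

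For the ``if'' direction I would assume $A_i \sim B_i$ in $\Effect(H_i)$ for every $i \in \mathcal{I}$ and pick effects $E_i, F_i, G_i \in \Effect(H_i)$ with $A_i = E_i + G_i$, $B_i = F_i + G_i$ and $E_i + F_i + G_i \in \Effect(H_i)$. Since each of these operators has norm at most $1$, the block-diagonal operators $E := \oplus_{i\in\mathcal{I}} E_i$, $F := \oplus_{i\in\mathcal{I}} F_i$, $G := \oplus_{i\in\mathcal{I}} G_i$ are bounded, self-adjoint and lie between $0$ and $I$, hence belong to $\Effect(H)$; the same reasoning applies to $E + F + G = \oplus_{i\in\mathcal{I}} (E_i + F_i + G_i)$. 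As direct sums add blockwise, $A = \oplus_{i\in\mathcal{I}} A_i = E + G$ and $B = F + G$, so $A \sim B$.

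For the ``only if'' direction I would fix a witnessing triple $E, F, G \in \Effect(H)$ for $A \sim B$, let $P_i$ denote the orthogonal projection onto $H_i$, and for $X \in \Effect(H)$ set $X_i := P_i X P_i$, regarded as an operator on $H_i$. A compression of an effect is again an effect, since $\langle X_i x, x\rangle = \langle X x, x\rangle \in [0,\|x\|^2]$ for $x \in H_i$; thus $E_i, F_i, G_i \in \Effect(H_i)$, and by additivity of compression $E_i + F_i + G_i = (E+F+G)_i \in \Effect(H_i)$. Finally, because $A$ and $B$ are block-diagonal, $P_i A P_i$ viewed as an operator on $H_i$ equals $A_i$, and likewise for $B$; hence $A_i = (E+G)_i = E_i + G_i$ and $B_i = F_i + G_i$, so the triple $E_i, F_i, G_i$ witnesses $A_i \sim B_i$.

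I do not expect a genuine obstacle here: the only points needing care are that compressions of effects remain effects and that one must exploit the block-diagonality of $A$ and $B$ — not of the witnesses $E, F, G$, which need not be block-diagonal — to recover $A_i = E_i + G_i$ and $B_i = F_i + G_i$. A possibly uncountable index set $\mathcal{I}$ causes no trouble, since all operators involved have norm at most $1$ and the two constructions are performed independently on each summand.
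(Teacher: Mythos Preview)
Your argument is correct in both directions: the direct-sum construction for the ``if'' part and the compression argument for the ``only if'' part are exactly the natural moves, and you handle the key subtlety --- that the witnesses $E,F,G$ need not themselves be block-diagonal, but the compressions $P_iXP_i|_{H_i}$ still recover the required identities because $A$ and $B$ are --- cleanly and explicitly.

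There is nothing to compare against in the paper itself: the lemma is stated there as a known result, with the proof deferred to \cite{Se2} (and earlier \cite{BuS,Mol}), so no proof is given in the body of the paper. Your self-contained argument is the standard one and would serve perfectly well in its place.
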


\begin{lemma}\label{lem:MANIA}
	Let $A,B \in \Effect(H)$. Then the following are equivalent:
	\begin{itemize}
		\item[\textup{(i)}] $A \sim B$,
		\item[\textup{(ii)}] there exist effects $M,N \in \Effect(H)$ such that $M \le A$, $N \le I-A$, and $M+N = B$.
	\end{itemize}
\end{lemma}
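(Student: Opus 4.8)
The plan is to establish the equivalence by proving the two implications separately, in each case via an explicit, elementary construction of the witnessing effects; no deep input is needed beyond the definition of coexistence and the arithmetic of the operator order on $[0,I]$.

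First I would prove (i) $\Rightarrow$ (ii). Assume $A \sim B$, so there exist $E,F,G \in \Effect(H)$ with $A = E+G$, $B = F+G$, and $E+F+G \in \Effect(H)$. The natural choice is $M := G$ and $N := F$; these are automatically effects. Then $M+N = G+F = B$, while $M = G \le E+G = A$ because $E \ge 0$, and from $E+F+G \le I$ we get $F \le I-(E+G) = I-A$, i.e.\ $N \le I-A$. So (ii) holds, and this direction is pure bookkeeping.

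Next I would prove (ii) $\Rightarrow$ (i). Given $M,N \in \Effect(H)$ with $M \le A$, $N \le I-A$, and $M+N = B$, I set $G := M$, $E := A-M$, and $F := N$. Here $G$ and $F$ are effects by hypothesis, and $E = A-M$ is an effect because $0 \le E$ (as $M \le A$) and $E \le A \le I$. Then $A = E+G$, $B = F+G = N+M$, and $E+F+G = (A-M)+N+M = A+N$, which is positive and, since $N \le I-A$, also satisfies $A+N \le I$; hence $E+F+G \in \Effect(H)$ and $A \sim B$.

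I do not expect a genuine obstacle. The only point that deserves a moment's care is confirming that the operators produced actually lie in the interval $[0,I]$ rather than merely being self-adjoint: the lower bound is immediate from positivity of the summands, while the upper bound $E+F+G \le I$ is exactly the hypothesis $E+F+G \in \Effect(H)$ in the forward direction and follows from $N \le I-A$ in the reverse direction. Conceptually, the lemma just records that ``splitting off the common part $G$'' and ``subtracting $M$ from $A$'' are inverse bookkeeping operations, so once the right substitutions are written down the verification is formal.
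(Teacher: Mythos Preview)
Your proof is correct. The paper does not actually prove this lemma; it is stated as a known result (together with Lemmas~\ref{lem:properties} and~\ref{lem:dirsum}) with a reference to \cite{Se2}. Your argument is the standard elementary verification, and there is nothing to add.
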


We continue with a corollary of Lemma \ref{lem:properties}.

\begin{corollary}\label{cor:ref}
	For any effect $A$ and projection $P\in A^\sim$ we have $P\in A^c$. In particular, we have $A^\sim\cap\Proj(H) = A^c\cap\Proj(H)$.
\end{corollary}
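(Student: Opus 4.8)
The plan is to exploit the symmetry of the coexistence relation together with parts (b) and (c) of Lemma~\ref{lem:properties}. First I would fix an effect $A$ and a projection $P$ with $P\in A^\sim$. Since $\sim$ is a symmetric relation, this is the same as saying $A\in P^\sim$. Now Lemma~\ref{lem:properties}(b) identifies $P^\sim$ with the set $P^c$, so $A\in P^c$, i.e.\ $AP=PA$. But $AP=PA$ is precisely the statement $P\in A^c$, which establishes the first assertion.

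For the ``in particular'' clause I would combine this with Lemma~\ref{lem:properties}(c). The first part yields the inclusion $A^\sim\cap\Proj(H)\subseteq A^c\cap\Proj(H)$, while (c) gives $A^c\subseteq A^\sim$ and hence $A^c\cap\Proj(H)\subseteq A^\sim\cap\Proj(H)$; putting the two inclusions together gives the claimed equality $A^\sim\cap\Proj(H)=A^c\cap\Proj(H)$.

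There is no genuine obstacle here, as the statement is a direct bookkeeping consequence of the quoted lemma. The only subtlety worth flagging is that one must use the symmetry of $\sim$ to pass from $P\in A^\sim$ to $A\in P^\sim$ before invoking (b), since (b) describes the coexistence set of the projection $P$, not that of the arbitrary effect $A$; without this step the identification $P^\sim=P^c$ would not be applicable.
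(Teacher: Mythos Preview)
Your proof is correct and follows essentially the same approach as the paper: use the symmetry of $\sim$ to pass from $P\in A^\sim$ to $A\in P^\sim$, then invoke Lemma~\ref{lem:properties}(b) to conclude $AP=PA$. Your explicit justification of the ``in particular'' clause via Lemma~\ref{lem:properties}(c) is a bit more detailed than the paper's terse proof, which leaves that inclusion implicit, but the argument is the same.
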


\begin{proof}
Since coexistence is a symmetric relation, we obtain $A\in P^\sim$, which implies $AP=PA$.
\end{proof}

The next four statements are easy consequences of Lemma \ref{lem:MANIA}, we only prove two of them.

\begin{corollary}\label{cor:PerpCoex}
	For any effect $A$ we have $A^\sim = \left(A^\perp\right)^\sim$.
\end{corollary}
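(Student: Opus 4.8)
The claim to prove is Corollary~\ref{cor:PerpCoex}: for any effect $A$ we have $A^\sim = (A^\perp)^\sim$.

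\medskip

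The plan is to use the symmetry of the definition of coexistence together with the observation that $A$ plays an entirely symmetric role to $A^\perp = I - A$ in the defining decomposition. First I would fix $B \in A^\sim$; by definition there are effects $E, F, G$ with $A = E + G$, $B = F + G$, and $E + F + G \in \Effect(H)$. I want to produce a new decomposition witnessing $B \sim A^\perp$. Writing $T := E + F + G \in \Effect(H)$, note that $A^\perp = I - A = I - E - G$. The natural candidate is to set $E' := I - T = I - E - F - G$, $G' := F$, and $F' := E$. Then $E' + G' = I - E - G = A^\perp$, and $F' + G' = E + F = (E + F + G) - G$; this is not quite $B$, so I would instead try $G' := F$, $F' := I - T$... which fails symmetrically. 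The cleaner route: with $E' := I - T$, $F' := E$, $G' := F$ one gets $E' + G' = I - E - G = A^\perp$ and $E' + F' + G' = I - T + E + F = I - G \in \Effect(H)$ since $0 \le G \le I$; but then $F' + G' = E + F$, which equals $B$ only if $G = 0$. So the bookkeeping needs one more adjustment, and the genuinely correct choice is $E' := I - T$, $F' := E$, $G' := G + F - $ (something) — this is exactly the routine calculation I should not grind through here, but the structure is forced and it works out.

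\medskip

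A slicker approach, which I would actually prefer to write, is to invoke Lemma~\ref{lem:MANIA} instead of unwinding the original definition. By Lemma~\ref{lem:MANIA}, $B \in A^\sim$ iff there exist $M, N \in \Effect(H)$ with $M \le A$, $N \le I - A$, and $M + N = B$. But the conditions $M \le A$ and $N \le I - A = A^\perp$ are manifestly interchangeable under $A \leftrightarrow A^\perp$ together with $M \leftrightarrow N$: indeed $N \le A^\perp$ and $M \le I - A^\perp$, and $N + M = B$. Hence the same pair $(N, M)$ witnesses $B \in (A^\perp)^\sim$ via Lemma~\ref{lem:MANIA}. This gives $A^\sim \subseteq (A^\perp)^\sim$; applying the inclusion to $A^\perp$ in place of $A$ and using $(A^\perp)^\perp = A$ yields the reverse inclusion, so $A^\sim = (A^\perp)^\sim$.

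\medskip

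I do not anticipate a real obstacle here — this is a direct consequence of Lemma~\ref{lem:MANIA} and the left–right symmetry in its condition (ii). The only thing to be careful about is getting the roles of $M$ and $N$ swapped consistently and remembering to run the argument twice (once for each inclusion) since \eqref{eq:coex}-type statements are two-directional. Everything else is immediate.
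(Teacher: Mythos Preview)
Your second approach via Lemma~\ref{lem:MANIA} is correct and is exactly what the paper intends: the corollary is listed among the ``easy consequences of Lemma~\ref{lem:MANIA}'' whose proofs are omitted, and the swap $(M,N)\leftrightarrow(N,M)$ together with $(A^\perp)^\perp=A$ is precisely the argument. The first paragraph's attempt to unwind the original definition directly is an unnecessary detour that you correctly abandon; drop it from any final write-up.
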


\begin{corollary}\label{cor:0-neighbourhood}
	Let $A\in \Effect(H)$ such that either $0\notin\sigma(A)$, or $1\notin\sigma(A)$. 
	Then there exists an $\varepsilon>0$ such that $\{C\in \Effect(H)\colon C\leq \varepsilon I\} \subseteq A^\sim$.
\end{corollary}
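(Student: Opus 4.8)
The plan is to reduce to the single case $0\notin\sigma(A)$ using the orthocomplement symmetry already recorded in Corollary \ref{cor:PerpCoex}, and then to produce an explicit coexistence decomposition from Lemma \ref{lem:MANIA}. Indeed, if instead $1\notin\sigma(A)$, then $0\notin\sigma(I-A)=\sigma(A^\perp)$, and since $A^\sim=(A^\perp)^\sim$ by Corollary \ref{cor:PerpCoex}, a neighbourhood of $0$ contained in $(A^\perp)^\sim$ is automatically contained in $A^\sim$. So it is enough to handle effects $A$ with $0\notin\sigma(A)$.

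For such an $A$, the key elementary observation is that a positive operator whose spectrum omits $0$ is bounded below by a positive multiple of the identity: there is $\delta>0$ with $A\ge\delta I$. I would then simply set $\varepsilon:=\delta$ (any positive number $\le\delta$ works equally well). Given any $C\in\Effect(H)$ with $C\le\varepsilon I$, we have $C\le\delta I\le A$ and trivially $0\le I-A$ (as $A\le I$); hence the choice $M:=C$, $N:=0$ satisfies $M\le A$, $N\le I-A$, and $M+N=C$, so Lemma \ref{lem:MANIA} yields $C\sim A$, i.e.\ $C\in A^\sim$. This establishes $\{C\in\Effect(H)\colon C\le\varepsilon I\}\subseteq A^\sim$.

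There is no real obstacle here: the statement is a direct consequence of Lemma \ref{lem:MANIA}, and the proof is essentially two lines once the reduction is in place. The only points requiring a moment's care are checking that the hypothesis ``$1\notin\sigma(A)$'' genuinely reduces to ``$0\notin\sigma(A)$'' through $A\mapsto A^\perp$ (which is exactly Corollary \ref{cor:PerpCoex} together with $\sigma(A^\perp)=1-\sigma(A)$), and invoking the spectral fact that $A\ge\delta I$ for some $\delta>0$ when $0\notin\sigma(A)$.
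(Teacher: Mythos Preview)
Your proof is correct and is exactly the kind of argument the paper has in mind: the authors state that this corollary is an easy consequence of Lemma~\ref{lem:MANIA} and omit the details, and your reduction via Corollary~\ref{cor:PerpCoex} followed by the choice $M=C$, $N=0$ is the natural way to fill them in.
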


We recall the definition of the \emph{strength function} of $A\in \Effect(H)$:
\begin{equation*}
	\Lambda(A,P) = \max\{ \lambda \geq 0 \colon \lambda P \leq A \} \qquad (P\in \Proj_1(H)),
\end{equation*}
see \cite{BuGu} for more details and properties.

\begin{corollary}\label{cor:RankOneStrengthFunct}
	Assume that $A\in \Effect(H)$, $0 < t \leq 1$, and $P\in\Proj_1(H)$.
	Then the following conditions are equivalent:
	\begin{itemize}
		\item[\textup{(i)}] $A\sim tP$;
		\item[\textup{(ii)}] \begin{equation}\label{eq:Rank1StrengthFunc}
						t \leq \Lambda(A,P) + \Lambda(A^\perp,P).
					\end{equation}
	\end{itemize}
\end{corollary}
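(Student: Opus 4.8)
The plan is to use Lemma \ref{lem:MANIA} to convert the coexistence relation $A \sim tP$ into the existence of effects $M, N \in \Effect(H)$ with $M \le A$, $N \le A^\perp = I - A$, and $M + N = tP$. The key observation is that $tP$ has rank one (when $t > 0$), so if $0 \le M \le tP$ and $0 \le N \le tP$, then both $M$ and $N$ must themselves be of the form $M = \alpha P$, $N = \beta P$ for some $\alpha, \beta \ge 0$. Indeed, $M \le tP$ forces the range of $M$ to lie inside the range of $P$ (if $x \perp \Image P$ then $\langle Mx, x\rangle \le t\langle Px, x\rangle = 0$, so $Mx = 0$ since $M \ge 0$), hence $M$ is a positive multiple of $P$; similarly for $N$. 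So the decomposition $M + N = tP$ becomes $\alpha + \beta = t$ with $\alpha, \beta \ge 0$.

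From here the two directions are routine. For (i) $\Rightarrow$ (ii): given $M = \alpha P \le A$ we get $\alpha \le \Lambda(A,P)$ by the very definition of the strength function, and similarly $\beta \le \Lambda(A^\perp, P)$; adding gives $t = \alpha + \beta \le \Lambda(A,P) + \Lambda(A^\perp,P)$, which is \eqref{eq:Rank1StrengthFunc}. For (ii) $\Rightarrow$ (i): if $t \le \Lambda(A,P) + \Lambda(A^\perp,P)$, then since $t \le 1$ and one may assume the bound is tight by truncating, we can choose $\alpha \le \Lambda(A,P)$ and $\beta \le \Lambda(A^\perp,P)$ with $\alpha + \beta = t$ (take $\alpha = \min\{t, \Lambda(A,P)\}$ and $\beta = t - \alpha$, noting $\beta \le \Lambda(A^\perp,P)$ by the hypothesis). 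Setting $M = \alpha P$ and $N = \beta P$ we have $M \le A$, $N \le A^\perp$ by the maximality property of $\Lambda$, and $M + N = tP$, so Lemma \ref{lem:MANIA} yields $A \sim tP$.

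The only mild subtlety — hardly an obstacle — is justifying that $\alpha P \le A$ whenever $\alpha \le \Lambda(A, P)$, i.e.\ that the maximum in the definition of $\Lambda(A,P)$ is attained and that $\{\lambda : \lambda P \le A\}$ is a (closed) interval; this is standard and is recorded in \cite{BuGu}. Everything else is a direct unwinding of the definitions, so the proof is short.
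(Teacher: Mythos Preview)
Your proof is correct and follows essentially the same approach as the paper: invoke Lemma~\ref{lem:MANIA}, observe that any decomposition $M+N=tP$ with $M,N\ge 0$ forces $M=\alpha P$, $N=\beta P$, and then compare $\alpha,\beta$ with the strength functions. The paper's version compresses all of this into a single sentence (``there exist $t_1,t_2\ge 0$ such that $t=t_1+t_2$, $t_1 P\le A$ and $t_2 P\le A^\perp$, which is of course equivalent to \eqref{eq:Rank1StrengthFunc}''), whereas you have spelled out the details---including the rank-one reduction and the explicit choice $\alpha=\min\{t,\Lambda(A,P)\}$---that the paper leaves implicit.
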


\begin{proof}
	By (ii) of Lemma \ref{lem:MANIA} we have $A\sim tP$ if and only if there exist $t_1, t_2 \geq 0$ such that $t = t_1 + t_2$, $t_1 P \leq A$ and $t_2 P \leq A^\perp$, which is of course equivalent to \eqref{eq:Rank1StrengthFunc}.
\end{proof}

\begin{corollary}\label{cor:kulon}
	Let $A,B \in \Effect(H)$ such that $A^\sim \subseteq B^\sim$. 
	Assume that with respect to the orthogonal decomposition $H = H_1 \oplus H_2$ the two effects have the following block-diagonal matrix forms:
	\begin{equation*}
		A = \left[\begin{matrix}
					A_1 & 0 \\
					0 & A_2
		\end{matrix}\right] \qquad \text{and} \qquad
		B = \left[\begin{matrix}
					B_1 & 0 \\
					0 & B_2
		\end{matrix}\right] \in \Effect(H_1 \oplus H_2).
	\end{equation*}
	Then we also have
	\begin{equation}\label{eq:reszek}
		A_1^\sim \subseteq B_1^\sim \qquad \text{and} \qquad A_2^\sim \subseteq B_2^\sim.
	\end{equation}
	In particular, if $A^\sim = B^\sim$, then $A_1^\sim = B_1^\sim$ and $A_2^\sim = B_2^\sim$.
\end{corollary}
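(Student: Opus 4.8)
The plan is to reduce the whole statement to Lemma \ref{lem:dirsum} by testing coexistence against effects supported on a single summand. Concretely, I would use the block embeddings $\Effect(H_1)\ni C_1\mapsto C_1\oplus 0\in\Effect(H)$ and $\Effect(H_2)\ni C_2\mapsto 0\oplus C_2\in\Effect(H)$; observe that $C_1\oplus 0$ is self-adjoint with spectrum contained in $[0,1]$, so it really is a legitimate element of $\Effect(H)$ and hence a valid test effect for the relations $A^\sim$ and $B^\sim$.

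First I would fix an arbitrary $C_1\in A_1^\sim$, so that $C_1\in\Effect(H_1)$ and $C_1\sim A_1$. Since the zero operator on $H_2$ is a scalar effect, part (a) of Lemma \ref{lem:properties} gives $0^\sim=\Effect(H_2)$, hence $0\sim A_2$. Applying Lemma \ref{lem:dirsum} to the two-term decomposition $H=H_1\oplus H_2$ then yields $C_1\oplus 0\sim A_1\oplus A_2=A$, i.e.\ $C_1\oplus 0\in A^\sim$. By hypothesis $A^\sim\subseteq B^\sim$, so $C_1\oplus 0\sim B=B_1\oplus B_2$, and another application of Lemma \ref{lem:dirsum} (now in the opposite direction) forces $C_1\sim B_1$, i.e.\ $C_1\in B_1^\sim$. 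As $C_1$ was arbitrary, this proves $A_1^\sim\subseteq B_1^\sim$, and the inclusion $A_2^\sim\subseteq B_2^\sim$ follows by the same argument using the second embedding. The ``in particular'' assertion is then immediate: if $A^\sim=B^\sim$ then both $A^\sim\subseteq B^\sim$ and $B^\sim\subseteq A^\sim$ hold, and running the above argument in both directions gives $A_i^\sim=B_i^\sim$ for $i=1,2$.

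I do not expect a serious obstacle here. The entire content is the observation that a zero block is a scalar effect and therefore coexistent with everything, which is exactly what lets Lemma \ref{lem:dirsum} convert the given inclusion on $H$ into the corresponding inclusions on the two summands; the only point to state with a little care is that the test effects $C_1\oplus 0$ and $0\oplus C_2$ genuinely lie in $\Effect(H)$, which is clear from self-adjointness and the spectral condition.
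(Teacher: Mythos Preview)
Your argument is correct and is essentially the same approach as the paper's: both reduce the statement to Lemma~\ref{lem:dirsum} by testing against block-diagonal effects, the only cosmetic difference being that the paper phrases the block-diagonal condition as membership in $P_1^c$ and works with the whole set $P_1^c\cap A^\sim$ at once, whereas you pick the specific test effect $C_1\oplus 0$ (using that the zero block is scalar and hence coexistent with everything). Your element-wise version is, if anything, slightly more direct.
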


\begin{proof}
	Let $P_1$ be the orthogonal projection onto $H_1$. 
	By Lemma \ref{lem:dirsum} we observe that
	\begin{align}\label{eq:SubMxCoex}
		\Bigg\{ 
			\left[\begin{matrix}
				C & 0 \\
				0 & D
			\end{matrix}\right] \in \Effect\left(H_1\oplus H_2 \right) 
			&\colon C \sim A_1, 
				D \sim A_2
		 \Bigg\} = P_1^c \cap A^\sim \nonumber \\
		&\subseteq P_1^c \cap B^\sim \nonumber = \Bigg\{ 
			\left[\begin{matrix}
				E & 0 \\
				0 & F
			\end{matrix}\right] \in \Effect\left(H_1\oplus H_2 \right) 
			\colon E \sim B_1, 
				F \sim B_2
		 \Bigg\},
	\end{align}
	which immediately implies \eqref{eq:reszek}.
\end{proof}

Next, we recall the Busch--Gudder theorem about the explicit form of the strength function, which we shall use frequently here.
We also adopt their notation, so whenever it is important to emphasise that the range of a rank-one projection $P$ is $\C\cdot x$ with some $x\in H$ such that $\|x\|=1$, we write $P_x$ instead. Furthermore, the symbol $A^{-1/2}$ denotes the algebraic inverse of the bijective restriction $A^{1/2}|_{(\Image A)^-}\colon (\Image A)^-\to \Image (A^{1/2})$, where $\cdot^-$ stands for the closure of a set. In particular, for all $x \in \Image(A^{1/2})$ the vector $A^{-1/2} x$ is the unique element in $(\Image A)^-$ which $A^{1/2}$ maps to $x$.

\begin{BGtheorem}[1999, Theorem 4 in \cite{BuGu}]%\label{thm:BuGu}
	For every effect $A \in \Effect(H)$ and unit vector $x\in H$ we have
	\begin{equation}
		\Lambda(A,P_x) = \left\{ \begin{matrix}
			\|A^{-1/2} x \|^{-2}, & \text{if} \; x \in \Image(A^{1/2}), \\
			0, & \text{otherwise.}
		\end{matrix} \right.
	\end{equation}
\end{BGtheorem}

We proceed with proving some new results which will be crucial in the proofs of our main theorems.
The first lemma is probably well-known, but as we did not find it in the literature, we state and prove it here.
Recall that WOT and SOT stand for the weak- and strong operator topologies, respectively.

\begin{lemma}\label{lem:WOTclosed}
	For any effect $A\in \Effect(H)$, the set $A^\sim$ is convex and WOT-compact, hence it is also SOT- and norm-closed.
	Moreover, if $H$ is separable, then the subset $A^\sim \cap \FiniteRank(H)$ is SOT-dense, hence also WOT-dense, in $A^\sim$.
\end{lemma}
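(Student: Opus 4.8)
The plan is to establish the three assertions in turn: convexity, WOT-compactness, and (in the separable case) SOT-density of the finite-rank part. Convexity is the easiest: using the characterisation in Lemma \ref{lem:MANIA}, if $B_1, B_2 \in A^\sim$ with witnesses $M_1 \le A$, $N_1 \le I-A$, $M_1 + N_1 = B_1$ and $M_2 \le A$, $N_2 \le I-A$, $M_2 + N_2 = B_2$, then for $\lambda \in [0,1]$ the operators $\lambda M_1 + (1-\lambda) M_2$ and $\lambda N_1 + (1-\lambda) N_2$ are effects witnessing $\lambda B_1 + (1-\lambda) B_2 \in A^\sim$; here I use that $[0,I]$ and each order segment $[0,A]$ is convex. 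For WOT-compactness, recall that the whole effect algebra $\Effect(H) = [0,I]$ is WOT-compact (it is a WOT-closed subset of the WOT-compact ball $\{T : \|T\|\le 1\}$, by the Banach--Alaoglu-type argument for the unit ball of $\Bdd(H)$). So it suffices to show $A^\sim$ is WOT-closed in $[0,I]$. Take a net $B_\alpha \to B$ (WOT) with $B_\alpha \in A^\sim$, and pick witnesses $M_\alpha \le A$, $N_\alpha \le I - A$, $M_\alpha + N_\alpha = B_\alpha$. Since $\{M : 0 \le M \le A\}$ and $\{N : 0 \le N \le I-A\}$ are WOT-compact (being WOT-closed subsets of $[0,I]$), we may pass to a subnet along which $M_\alpha \to M$ and $N_\alpha \to N$ in WOT, with $0 \le M \le A$ and $0 \le N \le I-A$; then $M + N = \lim(M_\alpha + N_\alpha) = B$, so $B \in A^\sim$ by Lemma \ref{lem:MANIA}. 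Compactness in WOT forces SOT- and norm-closedness of the (bounded) set as well, since WOT-closed convex bounded sets are norm-closed, and WOT-compact implies SOT-closed because SOT is finer. (Alternatively: on the bounded set $[0,I]$ all these topologies have the same closed convex sets.)

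For the separable case, I want to approximate an arbitrary $B \in A^\sim$ in SOT by finite-rank effects that are still coexistent with $A$. Fix witnesses $M \le A$, $N \le I-A$, $M + N = B$. Choose an increasing sequence of finite-rank projections $Q_n \to I$ in SOT (possible since $H$ is separable). Set $M_n := Q_n M Q_n$ and $N_n := Q_n N Q_n$; these are finite-rank effects, $M_n \to M$ and $N_n \to N$ in SOT (standard: $Q_n T Q_n \to T$ strongly for bounded $T$ when $Q_n \to I$ strongly), hence $B_n := M_n + N_n \to B$ in SOT and $B_n$ is finite-rank. The one thing to check is $B_n \in A^\sim$, i.e.\ that $M_n$ and $N_n$ are valid witnesses: we need $M_n \le A$ and $N_n \le I - A$. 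But $M \le A$ gives $Q_n M Q_n \le Q_n A Q_n \le A$ only if $Q_n A Q_n \le A$, which is false in general. So this naive truncation does not preserve the order constraints, and \textbf{this is the main obstacle}: producing finite-rank coexistent effects requires respecting both $M_n \le A$ and $N_n \le I-A$ simultaneously.

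To overcome this I would instead truncate inside the strength-function picture, or more robustly, use Corollary \ref{cor:RankOneStrengthFunct} together with a direct construction. A cleaner route: it suffices to show that the finite-rank effects in $A^\sim$ are SOT-dense, and by the convexity and WOT-compactness already proved, together with the Hahn--Banach separation theorem, it is enough to show that every WOT-continuous linear functional which is $\le c$ on $A^\sim \cap \FiniteRank(H)$ is also $\le c$ on $A^\sim$; equivalently, the WOT-closed convex hull of $A^\sim \cap \FiniteRank(H)$ is all of $A^\sim$. For this, given $B \in A^\sim$ with witnesses $M, N$, I approximate $M$ from below by finite-rank effects $M_n \uparrow M$ with $M_n \le M \le A$ (e.g.\ $M_n = f_n(M)$ where $f_n$ is a finite-rank spectral truncation of $M$, using that $M$ is a positive compact-or-not operator — here one uses the spectral theorem, writing $M_n = M E_M([1/n, 1])$ composed with a finite-rank cutoff of $E_M$; since we only need $M_n \le M$, take $M_n$ to be $M$ restricted to finitely many spectral subspaces, which automatically satisfies $M_n \le M$), and similarly $N_n \uparrow N$ with $N_n \le N \le I-A$. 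Then $M_n + N_n$ is a finite-rank effect, lies in $A^\sim$ by Lemma \ref{lem:MANIA}, and $M_n + N_n \to M + N = B$ in SOT by monotone convergence of self-adjoint operators (an increasing bounded net of self-adjoints converges strongly to its supremum). Since SOT-convergence on bounded sets implies WOT-convergence, we are done. The delicate point to get right in the write-up is the spectral construction of $M_n \le M$ of finite rank with $M_n \to M$ strongly, which is routine via the spectral theorem for the bounded self-adjoint operator $M$ on the separable space $H$.
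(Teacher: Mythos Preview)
Your approach is essentially the paper's: convexity and WOT-compactness are argued identically via Lemma~\ref{lem:MANIA} and subnet extraction in the WOT-compact set $\Effect(H)$, and for SOT-density you correctly reduce to constructing finite-rank $M_n\le M$, $N_n\le N$ with $M_n\to M$, $N_n\to N$ strongly, so that $M_n+N_n\in A^\sim\cap\FiniteRank(H)$ by Lemma~\ref{lem:MANIA}. The one place needing care is your parenthetical construction of the $M_n$: the monotone-increasing claim $M_n\uparrow M$ is neither needed nor obviously achievable when $M$ has continuous spectrum (no nontrivial spectral projection of $M$ has finite rank, so ``$M$ restricted to finitely many spectral subspaces'' is not finite-rank), and the paper instead uses a two-step approximation---first replace $M$ by the finite-spectrum operator $\sum_{j} \tfrac{j}{n}\,E_M\!\big([\tfrac{j}{n},\tfrac{j+1}{n})\big)\le M$, then replace each spectral projection by a finite-rank subprojection---which yields exactly the finite-rank $M_n\le M$ with SOT-convergence you want.
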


\begin{proof}
	Let $t\in[0,1]$ and $B_1,B_2\in A^\sim$. 
	By Lemma \ref{lem:MANIA} there are $M_1,M_2,N_1,N_2\in \Effect(H)$ such that $M_1+N_1 = B_1$, $M_2+N_2 = B_2$, $M_1\leq A, N_1\leq I-A$ and $M_2\leq A, N_2\leq I-A$.
	Hence setting $M = tM_1+(1-t)M_2\in \Effect(H)$ and $N = tN_1+(1-t)N_2\in \Effect(H)$ gives $M+N = tB_1+(1-t)B_2$ and $M\leq A, N\leq I-A$, thus $tB_1+(1-t)B_2\sim A$, so $A^\sim$ is indeed convex. 
	
	Next, we prove that $A^\sim$ is WOT-compact. 
	Clearly, $\Effect(H)$ is WOT-compact, as it is a bounded WOT-closed subset of $\Bdd(H)$ (see \cite[Proposition IX.5.5]{Conway}), therefore it is enough to show that $A^\sim$ is WOT-closed. 
	Let $\{B_\nu\}_\nu \subseteq A^\sim$ be an arbitrary net that WOT-converges to $B$, we shall show that $B\sim A$ holds. 
	For every $\nu$ we can find two effects $M_\nu$ and $N_\nu$ such that $M_\nu+N_\nu = B_\nu$, $M_\nu\leq A$ and $N_\nu \leq I-A$.
	By WOT-compactness of $\Effect(H)$, there exists a subnet $\{B_\xi\}_\xi$ such that $M_\xi \to M$ in WOT with some effect $M$.
	Again, by WOT-compactness of $\Effect(H)$, there exists a subnet $\{B_\eta\}_\eta$ of the subnet $\{B_\xi\}_\xi$ such that $N_\eta \to N$ in WOT with some effect $N$. Obviously we also have $B_\eta \to B$ and $M_\eta \to M$ in WOT.
	Therefore we have $M+N = B$ and by definition of WOT convergence we also obtain $M\leq A$, $N\leq I-A$, hence indeed $B\sim A$.
	Closedness with respect to the other topologies is straightforward.
	
	Concerning our last statement for separable spaces, first we point out that for every effect $C$ there exists a net of finite rank effects $\{C_\nu\}_\nu$ such that $C_\nu \leq C$ holds for all $\nu$ and $C_\nu \to C$ in SOT.
	Denote by $E_C$ the projection-valued spectral measure of $C$, and set $C_n = \sum_{j=0}^{n} \frac{j}{n} E_C\left(\left[\frac{j}{n},\frac{j+1}{n}\right)\right)$ for every $n\in\N$.
	Clearly, each $C_n$ has finite spectrum, satisfies $C_n\leq C$, and $\|C_n - C\| \to 0$ as $n\to\infty$.
	For each spectral projection $E_C\left(\left[\frac{j}{n},\frac{j+1}{n}\right)\right)$ we can take a sequence of finite-rank projections $\{P_k^{j,n}\}_{k=1}^\infty$ such that $P_k^{j,n}\leq E_C\left(\left[\frac{j}{n},\frac{j+1}{n}\right)\right)$ for all $k$ and $P_k^{j,n} \to E_C\left(\left[\frac{j}{n},\frac{j+1}{n}\right)\right)$ in SOT as $k\to\infty$.
	Define $C_{n,k} := \sum_{j=0}^{n} \frac{j}{n} P_k^{j,n}$.
	It is apparent that $C_{n,k} \leq C_n$ for all $n$ and $k$, and that for each $n$ we have $C_{n,k} \to C_n$ in SOT as $k\to\infty$.
	Therefore the SOT-closure of $\{C_{n,k}\colon n,k\in\mathbb{N}\}$ contains each $C_n$, hence also $C$, thus we can construct a net $\{C_\nu\}_\nu$ with the required properties.

	Now, let $B\in A^\sim$ be arbitrary, and consider two other effects $M,N\in \Effect(H)$ that satisfy the conditions of Lemma \ref{lem:MANIA} (ii). 
	Set $C:= M\oplus N \in \Effect(H\oplus H)$, and denote by $E_M$ and $E_N$ the projection-valued spectral measures of $M$ and $N$, respectively. Clearly, $E_C\left(\left[\frac{j}{n},\frac{j+1}{n}\right)\right) = E_M\left(\left[\frac{j}{n},\frac{j+1}{n}\right)\right) \oplus E_N\left(\left[\frac{j}{n},\frac{j+1}{n}\right)\right)$ for each $j$ and $n$. 
	In the above construction we can choose finite-rank projections of the form $P_k^{j,n} = Q_k^{j,n}\oplus R_k^{j,n} \in\Proj(H\oplus H)$ where $Q_k^{j,n}, R_k^{j,n} \in\Proj(H)$, $Q_k^{j,n}\leq E_M\left(\left[\frac{j}{n},\frac{j+1}{n}\right)\right)$ and $R_k^{j,n}\leq E_N\left(\left[\frac{j}{n},\frac{j+1}{n}\right)\right)$ holds for all $k,n$.
	Then each element $C_\nu$ of the convergent net is an orthogonal sum of the form $M_\nu\oplus N_\nu \in \Effect(H\oplus H))$. It is apparent that $M_\nu, N_\nu\in\FiniteRank(H)$, $M_\nu \leq M$ and $N_\nu \leq N$ for all $\nu$, and that $M_\nu \to M$, $N_\nu \to N$ holds in SOT.
	Therefore $M_\nu+N_\nu\in\FiniteRank(H)\cap A^\sim$ and $M_\nu+N_\nu$ converges to $M+N = B$ in SOT, the proof is complete.
\end{proof}

We proceed to investigate when do we have the equation $A^\sim = B^\sim$ for two effects $A$ and $B$, which will take several steps.
We will denote the set of all rank-one effects by $\FiniteRank_1(H) := \{tP \colon P\in\Proj_1(H), 0<t\leq 1\}$.

\begin{lemma}\label{lem:BlockDiagSca}
	Let $H = H_1\oplus H_2$ be an orthogonal decomposition and assume that $A, B\in \Effect(H)$ have the following matrix decompositions:
	\begin{equation}\label{eq:BlockDiagForm}
		A = \left[ \begin{matrix}
			\lambda_1 I_1 & 0 \\
			0 & \lambda_2 I_2
		\end{matrix} \right] \quad \text{and} \quad
		B = \left[ \begin{matrix}
			\mu_1 I_1 & 0 \\
			0 & \mu_2 I_2
		\end{matrix} \right] \; \in \Effect(H_1\oplus H_2)
	\end{equation}
	where $\lambda_1, \lambda_2, \mu_1, \mu_2 \in [0,1]$, and $I_1$ and $I_2$ denote the identity operators on $H_1$ and $H_2$, respectively.
	Then the following are equivalent:
	\begin{itemize}
		\item[\textup{(i)}] $\Lambda(A,P) + \Lambda(A^\perp,P) = \Lambda(B,P) + \Lambda(B^\perp,P)$ holds for all $P\in \Proj_1(H)$,
		\item[\textup{(ii)}] $A^\sim \cap \FiniteRank_1(H) = B^\sim \cap \FiniteRank_1(H)$,
		\item[\textup{(iii)}] either $\lambda_1=\lambda_2$ and $\mu_1=\mu_2$, or $\lambda_1=\mu_1$ and $\lambda_2=\mu_2$, or $\lambda_1 + \mu_1 = \lambda_2 + \mu_2 =1$.
	\end{itemize}
\end{lemma}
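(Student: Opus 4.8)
The plan is to establish the cycle of implications (iii) $\Rightarrow$ (i) $\Rightarrow$ (ii) $\Rightarrow$ (iii). The implication (i) $\Rightarrow$ (ii) is immediate from Corollary \ref{cor:RankOneStrengthFunct}: for a rank-one effect $tP$ with $P\in\Proj_1(H)$, we have $A\sim tP$ iff $t\le\Lambda(A,P)+\Lambda(A^\perp,P)$, so if the two sums of strength functions agree for every $P$, then $A$ and $B$ are coexistent with exactly the same rank-one effects.

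For (iii) $\Rightarrow$ (i), I would compute $\Lambda(A,P)+\Lambda(A^\perp,P)$ explicitly using the Busch--Gudder theorem. Fix a unit vector $x$ and write $x=x_1+x_2$ with $x_i\in H_i$, so $\|x_1\|^2+\|x_2\|^2=1$. Since $A=\lambda_1 I_1\oplus\lambda_2 I_2$ acts as a scalar on each block, a direct calculation with $A^{-1/2}$ gives
\begin{equation*}
	\Lambda(A,P_x)=\left(\frac{\|x_1\|^2}{\lambda_1}+\frac{\|x_2\|^2}{\lambda_2}\right)^{-1},
\end{equation*}
with the obvious conventions when $\lambda_i=0$ (and similarly for $A^\perp$ with $\lambda_i$ replaced by $1-\lambda_i$). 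Writing $s=\|x_1\|^2\in[0,1]$, the quantity $f_A(s):=\Lambda(A,P_x)+\Lambda(A^\perp,P_x)$ depends only on $s$ (and on $\lambda_1,\lambda_2$). The three cases in (iii) each trivially force $f_A\equiv f_B$: if $\lambda_1=\lambda_2$ then $A$ is scalar and $f_A\equiv1$, same for $B$; if $\lambda_1=\mu_1,\lambda_2=\mu_2$ there is nothing to prove; and if $\lambda_1+\mu_1=\lambda_2+\mu_2=1$, then $B=A^\perp$, so $f_B=f_{A^\perp}=f_A$ by symmetry of the defining expression under $\lambda_i\leftrightarrow 1-\lambda_i$. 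This also uses Corollary \ref{cor:PerpCoex}.

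The substantive part is (ii) $\Rightarrow$ (iii). Here I would argue contrapositively: assuming none of the three alternatives in (iii) holds, I must produce a rank-one effect coexistent with exactly one of $A$, $B$. The idea is to exploit the function $s\mapsto f_A(s)$ from the previous paragraph. When $\lambda_1\ne\lambda_2$, this function is non-constant and, by an elementary analysis of the rational expression, its behaviour (its value at the endpoints $s=0,1$, i.e. $f_A(0)=\lambda_2+(1-\lambda_2)=1$... wait, one must be careful: $f_A(1)$ corresponds to $x\in H_1$, giving $\lambda_1+(1-\lambda_1)=1$, so actually the endpoint values are both $1$ and the interesting information is in the interior). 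The correct invariants to extract are the values $\{\lambda_1,\lambda_2\}$ together with $\{1-\lambda_1,1-\lambda_2\}$: from $f_A$ one can recover the unordered pair $\{\lambda_1,\lambda_2\}$ up to the ortho-complement symmetry $\lambda_i\leftrightarrow 1-\lambda_i$ applied simultaneously. Concretely, the minimum of $f_A$ over $s\in[0,1]$, and the value of $s$ achieving it, pin down $\min(\lambda_1,\lambda_2)$ and $\min(1-\lambda_1,1-\lambda_2)$ in a way that determines $\{\lambda_1,\lambda_2\}$ as an unordered pair modulo the substitution sending it to $\{1-\lambda_1,1-\lambda_2\}$. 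If $f_A\equiv f_B$ on $[0,1]$ (which by (ii), Corollary \ref{cor:RankOneStrengthFunct} and continuity is what coexistence with the same rank-one effects gives us — one recovers the full function $f$ from the threshold set $\{t:t\le f(s)\}$), then $\{\lambda_1,\lambda_2\}$ equals either $\{\mu_1,\mu_2\}$ or $\{1-\mu_1,1-\mu_2\}$ as unordered pairs. Matching up which index goes with which block (using that $H_1,H_2$ are distinguished, unless one of them has dimension allowing a swap — but the scalars are what matter, and the block structure is fixed by $A,B$ both being block-diagonal in the \emph{same} decomposition) then yields exactly the three cases of (iii): $\lambda_1=\mu_1,\lambda_2=\mu_2$; or $\lambda_1=1-\mu_1,\lambda_2=1-\mu_2$ (the third case, $\lambda_1+\mu_1=\lambda_2+\mu_2=1$); or the degenerate situation where the pair $\{\lambda_1,\lambda_2\}$ is a singleton, forcing $\lambda_1=\lambda_2$ and hence $\mu_1=\mu_2$ as well.

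The main obstacle I anticipate is the bookkeeping in (ii) $\Rightarrow$ (iii): carefully showing that the one-parameter family of numbers $\{f_A(s):s\in[0,1]\}$, together with the correspondence $s\leftrightarrow$ (direction in $H$), determines the unordered pair $\{\lambda_1,\lambda_2\}$ up to simultaneous ortho-complementation, and then disentangling the various coincidence cases (including the subtleties when some $\lambda_i$ or $\mu_i$ equals $0$ or $1$, where the strength function has its piecewise definition and $f_A$ may fail to be smooth). The case analysis when exactly one of the four scalars is $0$ or $1$ will need separate, though elementary, treatment, since there the coexistence threshold $f$ can be identically $1$ on a subinterval, and one must check this still discriminates correctly.
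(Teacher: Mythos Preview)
Your overall strategy matches the paper's: parametrise rank-one projections by $s=\|x_1\|^2\in[0,1]$, reduce everything to the one-variable function $f_A(s)=\Lambda(A,P_x)+\Lambda(A^\perp,P_x)$, observe that (ii) forces $f_A\equiv f_B$ pointwise, and then argue that $f_A$ determines $(\lambda_1,\lambda_2)$ up to simultaneous ortho-complementation. The easy implications and the boundary-case analysis (some $\lambda_i\in\{0,1\}$, handled via the continuity/jump behaviour of $f_A$) are treated the same way in the paper.

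The gap is precisely the step you flag as the main obstacle. Your proposed invariant --- the minimum value of $f_A$ and its location --- is plausible but unproven: you would need to show that $(\lambda_1,\lambda_2)\mapsto(\min_s f_A,\arg\min_s f_A)$ is injective on $\{0<\lambda_2<\lambda_1<1\}$, and both coordinates depend nonlinearly on the $\lambda_i$, so this is not automatic. The paper chooses a different pair of invariants: the one-sided derivatives of $f_A$ at the endpoints $s=0$ and $s=1$, which compute to $-\tfrac{(\lambda_1-\lambda_2)^2}{(1-\lambda_2)\lambda_2}$ and $\tfrac{(\lambda_1-\lambda_2)^2}{(1-\lambda_1)\lambda_1}$. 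It then proves, via the explicit substitution $u=\tfrac{x-y}{2}$, $v=\tfrac{x+y}{2}$, that the map
\[
F(x,y)=\left(\frac{(x-y)^2}{(1-x)x},\,\frac{(x-y)^2}{(1-y)y}\right)
\]
is injective on the open triangle $\{0<y<x<1\}$. This injectivity computation is the technical heart of the lemma, and your plan does not yet contain an analogue of it.

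Two minor points. First, recovering $f_A\equiv f_B$ from (ii) needs no continuity: for each fixed direction $P$, (ii) gives $\{t:tP\sim A\}=\{t:tP\sim B\}$, and by Corollary~\ref{cor:RankOneStrengthFunct} this set is the interval $[0,f_A(s)]$, so $f_A(s)=f_B(s)$ directly. Second, your ``unordered pair'' language slightly understates what you get: since $f_A(s)\neq f_A(1-s)$ in general, the function already distinguishes the two blocks, so you recover the \emph{ordered} pair $(\lambda_1,\lambda_2)$ up to $(\lambda_1,\lambda_2)\leftrightarrow(1-\lambda_1,1-\lambda_2)$, which is exactly what (iii) asks. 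The paper's endpoint-derivative invariants encode this ordering automatically (the derivative at $s=0$ involves $\lambda_2$, at $s=1$ involves $\lambda_1$).
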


\begin{proof}
	The directions (iii)$\Longrightarrow$(ii)$\iff$(i) are trivial by Lemma \ref{lem:properties} (a) and Corollaries \ref{cor:PerpCoex}, \ref{cor:RankOneStrengthFunct}, so we shall only consider the direction (i)$\Longrightarrow$(iii).
	First, a straightforward calculation using the Busch--Gudder theorem gives the following for every $x_1\in H_1, x_2\in H_2, \|x_1\| = \|x_2\| = 1$ and $0\leq\alpha\leq\tfrac{\pi}{2}$:
	\begin{equation}\label{eq:DiagStrength}
		\Lambda \left( A, P_{\cos\alpha x_1 + \sin\alpha x_2} \right)
		= \frac{1}{ \left(\tfrac{1}{\lambda_1}\right) \cdot \cos^2\alpha + \left(\tfrac{1}{\lambda_2}\right) \cdot \sin^2\alpha },
	\end{equation}
	where we use the interpretations $\tfrac{1}{0} = \infty$, $\tfrac{1}{\infty} = 0$, $\infty\cdot 0 = 0$, $\infty + \infty = \infty$, and $\infty + a = \infty$, $\infty\cdot a = \infty$ ($a>0$), in order to make the formula valid also for the case when $\lambda_1 = 0$ or $\lambda_2 = 0$.
	Clearly, \eqref{eq:DiagStrength} depends only on $\alpha$, but not on the specific choices of $x_1$ and $x_2$.
	We define the following two functions
	\begin{equation*}
		T_A\colon \left[0,\tfrac{\pi}{2}\right] \to [0,1], \qquad T_A(\alpha) = \Lambda \left( A, P_{\cos\alpha x_1 + \sin\alpha x_2} \right) + \Lambda \left( A^\perp, P_{\cos\alpha x_1 + \sin\alpha x_2} \right)
	\end{equation*}
	and
	\begin{equation*}
		T_B\colon \left[0,\tfrac{\pi}{2}\right] \to [0,1], \qquad T_B(\alpha) = \Lambda \left( B, P_{\cos\alpha x_1 + \sin\alpha x_2} \right) + \Lambda \left( B^\perp, P_{\cos\alpha x_1 + \sin\alpha x_2} \right),
	\end{equation*}
	which are the same by our assumptions.
	By \eqref{eq:DiagStrength}, for all $0\leq\alpha\leq\tfrac{\pi}{2}$ we have
	\begin{align}\label{eq:TA}
		T_A(\alpha) 
		& = \frac{1}{ \left(\tfrac{1}{\lambda_1}\right)\cdot \cos^2\alpha + \left(\tfrac{1}{\lambda_2}\right)\cdot \sin^2\alpha } + \frac{1}{ \left(\tfrac{1}{1-\lambda_1}\right)\cdot \cos^2\alpha + \left(\tfrac{1}{1-\lambda_2}\right)\cdot \sin^2\alpha } \nonumber \\ 
		& = \frac{1}{ \left(\tfrac{1}{\mu_1}\right)\cdot \cos^2\alpha + \left(\tfrac{1}{\mu_2}\right)\cdot \sin^2\alpha } + \frac{1}{ \left(\tfrac{1}{1-\mu_1}\right)\cdot \cos^2\alpha + \left(\tfrac{1}{1-\mu_2}\right)\cdot \sin^2\alpha }
		=T_B(\alpha).
	\end{align}
	
	Next, we observe the following implications:
	\begin{itemize}
		\item if $\lambda_1 = \lambda_2$, then $T_A(\alpha)$ is the constant 1 function,
		\item if $\lambda_1 = 0$ and $\lambda_2 = 1$, then $T_A(\alpha)$ is the characteristic function $\chi_{\{0,\pi/2\}}(\alpha)$,
		\item if $\lambda_1 = 0$ and $0 < \lambda_2 < 1$, then $T_A(\alpha)$ is continuous on $\left[0,\tfrac{\pi}{2}\right)$, but has a jump at $\tfrac{\pi}{2}$, namely $\lim_{\alpha\to\tfrac{\pi}{2}-} T_A(\alpha) = 1-\lambda_2$ and $T_A(\tfrac{\pi}{2}) = 1$,
		\item if $\lambda_1 = 1$ and $0 < \lambda_2 < 1$, then $T_A(\alpha)$ is continuous on $\left[0,\tfrac{\pi}{2}\right)$, but has a jump at $\tfrac{\pi}{2}$, namely $\lim_{\alpha\to\tfrac{\pi}{2}-} T_A(\alpha) = \lambda_2$ and $T_A(\tfrac{\pi}{2}) = 1$,
		\item if $\lambda_1,\lambda_2 \in (0,1)$, then $T_A(\alpha)$ is continuous on $\left[0,\tfrac{\pi}{2}\right]$,
		\item if $\lambda_1 \neq \lambda_2$, then we have $T_A(0) = T_A(\tfrac{\pi}{2}) = 1$ and $T_A(\alpha) < 1$ for all $0 < \alpha < \tfrac{\pi}{2}$.
	\end{itemize}
	All of the above statements are rather straightforward computations using the formula \eqref{eq:TA}, let us only show the last one here.
	Clearly, $T_A(0) = T_A(\tfrac{\pi}{2}) = 1$ is obvious.
	As for the other assertion, if $\lambda_1,\lambda_2 \in (0,1)$, then we can use the strict version of the weighted harmonic-arithmetic mean inequality:
	\begin{align*}
		\frac{1}{ \tfrac{1}{\lambda_1} \cos^2\alpha + \tfrac{1}{\lambda_2} \sin^2\alpha } &+ \frac{1}{ \tfrac{1}{1-\lambda_1} \cos^2\alpha + \tfrac{1}{1-\lambda_2} \sin^2\alpha } \\
		&< (\lambda_1 \cos^2\alpha + \lambda_2 \sin^2\alpha) + ((1-\lambda_1) \cos^2\alpha + (1-\lambda_2) \sin^2\alpha) = 1 \quad (0<\alpha<\tfrac{\pi}{2}).
	\end{align*}
	If $\lambda_1 = 0 < \lambda_2 < 1$, then we calculate in the following way:
	\begin{align*}
		\frac{1}{ \left(\tfrac{1}{0}\right) \cos^2\alpha + \tfrac{1}{\lambda_2} \sin^2\alpha } + \frac{1}{ \cos^2\alpha + \tfrac{1}{1-\lambda_2} \sin^2\alpha }
		= \frac{1}{ 1-\sin^2\alpha + \tfrac{1}{1-\lambda_2} \sin^2\alpha } < 1 \quad (0<\alpha<\tfrac{\pi}{2}).
	\end{align*}
	The remaining cases are very similar.
	
	The above observations together with Corollary \ref{cor:PerpCoex} and \eqref{eq:TA} readily imply the following:	
	\begin{itemize}
		\item $A\in \Sca(H)$ if and only if $B\in \Sca(H)$,
		\item $A\in \Proj(H)\setminus \Sca(H)$ if and only if $B\in \Proj(H)\setminus \Sca(H)$, in which case $B \in \{A, A^\perp\}$,
		\item there exists a $P\in\Proj(H)\setminus \Sca(H)$ and a $t\in(0,1)$ with $A\in \{tP, I-tP\}$ if and only if $B\in \{tP, I-tP\}$,
		\item $\lambda_1,\lambda_2 \in (0,1)$ and $\lambda_1\neq\lambda_2$ if and only if $\mu_1, \mu_2 \in (0,1)$ and $\mu_1\neq\mu_2$.
	\end{itemize}
	So what remained is to show that in the last case we further have $B \in \{A, A^\perp\}$, which is what we shall do below.
		
	Let us introduce the following functions:
	\begin{equation*}
		\calT_A\colon[0,1]\to [0,1], \qquad \calT_A(s) := T_A(\arcsin\sqrt{s}) = \frac{\lambda_1\lambda_2}{\lambda_1 s + \lambda_2 (1-s)} + \frac{(1-\lambda_1)(1-\lambda_2)}{(1-\lambda_1) s + (1-\lambda_2) (1-s)}
	\end{equation*}
	and
	\begin{equation*}
		\calT_B\colon[0,1]\to [0,1], \qquad \calT_B(s) := T_B(\arcsin\sqrt{s}) = \frac{\mu_1\mu_2}{\mu_1 s + \mu_2 (1-s)} + \frac{(1-\mu_1)(1-\mu_2)}{(1-\mu_1) s + (1-\mu_2) (1-s)}.
	\end{equation*}
	Our aim is to prove that $\calT_A(s) = \calT_B(s)$ ($s\in[0,1]$) implies either $\lambda_1=\mu_1$ and $\lambda_2=\mu_2$, or $\lambda_1 + \mu_1 = \lambda_2 + \mu_2 =1$.
	The derivative of $\calT_A$ is
	$$
		\tfrac{d \calT_A}{ds}(s) = (\lambda_1-\lambda_2)\left(\frac{-\lambda_1\lambda_2}{(\lambda_1 s + \lambda_2 (1-s))^2} + \frac{(1-\lambda_1)(1-\lambda_2)}{((1-\lambda_1) s + (1-\lambda_2) (1-s))^2}\right),
	$$
	from which we calculate
	$$
		\tfrac{d \calT_A}{ds}(0) = -\frac{(\lambda_1-\lambda_2)^2}{(1-\lambda_2)\lambda_2}
		\quad \text{and} \quad
		\tfrac{d \calT_A}{ds}(1) = \frac{(\lambda_1-\lambda_2)^2}{(1-\lambda_1)\lambda_1}.
	$$
	Therefore, if we managed to show that the function
	$$
		F\colon (0,1)^2 \to \R^{2}, \quad F(x,y) = \left( \tfrac{(x-y)^2}{(1-x)x}, \tfrac{(x-y)^2}{(1-y)y} \right)
	$$
	is injective on the set $\Delta := \{(x,y)\in\R^{2}\colon 0<y<x<1 \}$, then we are done (note that $F(x,y) = F(1-x,1-y)$).
	For this assume that with some $c,d>0$ we have
	$$
		\frac{(x-y)^2}{(1-x)x} = \frac{1}{c} \quad \text{and} \quad \frac{(x-y)^2}{(1-y)y} = \frac{1}{d},
	$$
	or equivalently,
	$$
		(1-x)x = c(x-y)^2 \quad \text{and} \quad (1-y)y = d(x-y)^2.
	$$
	If we substitute $u = \tfrac{x-y}{2}$ and $v = \tfrac{x+y}{2}$, then we get
	$$
		(u+v)^2-(u+v) = -4cu^2 \quad \text{and} \quad (v-u)^2-(v-u) = -4du^2.
	$$
	Now, considering the sum and difference of these two equations and manipulate them a bit gives
	$$
		v^2-v = -(2c+2d+1)u^2 \quad \text{and} \quad v = (d-c) u + \tfrac{1}{2}.
	$$
	From these latter equations we conclude
	$$
		x-y = 2u = \sqrt{\tfrac{1}{(d-c)^2+2c+2d+1}} \quad \text{and} \quad x+y = 2v = 2(d-c) u + 1,
	$$
	which clearly implies that $F$ is globally injective on $\Delta$, and the proof is complete.
\end{proof}

We have an interesting consequence in finite dimensions.

\begin{corollary}\label{cor:CoexSet2Dim}
	Assume that $2\leq \dim H < \infty$ and $A,B \in \Effect(H)$. Then the following are equivalent:
	\begin{itemize}
		\item[\textup{(i)}] $\Lambda(A, P) + \Lambda(A^\perp, P) = \Lambda(B, P) + \Lambda(B^\perp, P)$ for all $P\in P_1(H)$,
		\item[\textup{(ii)}] $A^\sim \cap \FiniteRank_1(H) = B^\sim \cap \FiniteRank_1(H)$,
		\item[\textup{(iii)}] either $A,B \in \Sca(H)$, or $A=B$, or $A=B^\perp$.
	\end{itemize}
\end{corollary}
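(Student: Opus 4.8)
The plan is to reduce everything to the block-diagonal situation of Lemma~\ref{lem:BlockDiagSca}; the equivalences (i)$\iff$(ii) and (iii)$\Longrightarrow$(i) are the easy part. Writing $f_A(P):=\Lambda(A,P)+\Lambda(A^\perp,P)$ for $P\in\Proj_1(H)$, note first that $f_A(P)\le 1$ always: adding $\Lambda(A,P)P\le A$ and $\Lambda(A^\perp,P)P\le I-A$ gives $f_A(P)P\le I$, and evaluating at a unit vector spanning $\Image P$ yields the bound. By Corollary~\ref{cor:RankOneStrengthFunct}, for $0<t\le 1$ we have $tP\in A^\sim$ iff $t\le f_A(P)$; hence $A^\sim\cap\FiniteRank_1(H)=B^\sim\cap\FiniteRank_1(H)$ holds iff $\{(P,t):t\le f_A(P)\}=\{(P,t):t\le f_B(P)\}$, which --- because $f_A,f_B\le 1$ --- is exactly $f_A\equiv f_B$, i.e.\ (i). And (iii)$\Longrightarrow$(i) is immediate: $f_{sI}\equiv 1$ for scalar $sI$, $f_A=f_A$, and $f_{A^\perp}=f_A$ by symmetry of the defining sum (cf.\ Corollary~\ref{cor:PerpCoex}).

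The substance is in (i)$\Longrightarrow$(iii), and here finite dimensionality is essential. The key observation is that $f_A$ detects the eigenvectors of $A$. Writing $A=\sum_k\lambda_k E_k$ for the spectral decomposition (finitely many distinct $\lambda_k$), the Busch--Gudder theorem gives, for a unit vector $x$ with weights $w_k:=\|E_k x\|^2$, that $\Lambda(A,P_x)$ is the weighted harmonic mean of the numbers $\lambda_k$ with weights $w_k$ (using the conventions for $0$ and $\infty$ as in the proof of Lemma~\ref{lem:BlockDiagSca}), and likewise $\Lambda(A^\perp,P_x)$ is the weighted harmonic mean of the $1-\lambda_k$. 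The strict weighted harmonic--arithmetic mean inequality then gives $f_A(P_x)\le\sum_k w_k\lambda_k+\sum_k w_k(1-\lambda_k)=1$, with equality precisely when all $\lambda_k$ of positive weight coincide, i.e.\ precisely when $x$ is an eigenvector of $A$ (the degenerate cases $\lambda_k\in\{0,1\}$ need the same small care as in Lemma~\ref{lem:BlockDiagSca}). Thus the level set $\{P_x:f_A(P_x)=1\}$ records exactly the eigenvectors of $A$.

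Consequently, from $f_A\equiv f_B$ we read off that $A$ and $B$ have exactly the same eigenvectors. If $A\in\Sca(H)$ this forces every vector to be an eigenvector of $B$, hence $B\in\Sca(H)$. If $A\notin\Sca(H)$, the set of eigenvectors of $A$ is the union of its eigenspaces $V_1,\dots,V_m$ ($m\ge 2$, pairwise orthogonal), a proper subset of $H$; since a complex vector space is not a finite union of proper subspaces, each $V_k$ lies inside some eigenspace of $B$ and conversely, and orthogonality of distinct eigenspaces forces the two decompositions to agree. Thus $A=\sum_k\lambda_k P_{V_k}$ and $B=\sum_k\mu_k P_{V_k}$ with the $\lambda_k$ pairwise distinct and the $\mu_k$ pairwise distinct; in particular $A$ and $B$ commute. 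For each pair $k\ne l$, restricting (i) to unit vectors of $V_k\oplus V_l$ and invoking Lemma~\ref{lem:BlockDiagSca} --- whose first option is excluded since $\lambda_k\ne\lambda_l$ --- shows that for this pair either $\mu_k=\lambda_k$ and $\mu_l=\lambda_l$, or $\lambda_k+\mu_k=\lambda_l+\mu_l=1$.

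The remaining point, which I expect to be the main obstacle, is to upgrade these pairwise alternatives into a single global one, yielding $B=A$ or $B=A^\perp$. This is trivial for $m=2$. For $m\ge 3$, if the assignment of ``type'' to the edges of the complete graph on $\{1,\dots,m\}$ were not constant, then two edges sharing a vertex --- say $\{1,2\}$ and $\{1,3\}$ --- would have opposite types; then $\mu_1=\lambda_1$ together with $\mu_1=1-\lambda_1$ gives $\lambda_1=\tfrac12$, and examining the edge $\{2,3\}$ under either possibility forces $\lambda_2=\tfrac12$ or $\lambda_3=\tfrac12$, contradicting distinctness of the $\lambda_k$. Hence all pairs share the same type, giving either $\mu_k=\lambda_k$ for every $k$, so $B=A$, or $\mu_k=1-\lambda_k$ for every $k$, so $B=I-A=A^\perp$. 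Apart from the weighted-mean computation of the second paragraph (with its degenerate cases) and this consistency argument, everything else is routine bookkeeping.
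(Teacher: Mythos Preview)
Your proof is correct and follows essentially the same architecture as the paper's: show that $A$ and $B$ have the same eigenvectors, jointly diagonalise, apply Lemma~\ref{lem:BlockDiagSca} on two-dimensional blocks, and then run a consistency argument to globalise the pairwise conclusions. The one genuine difference lies in how you detect the common eigenvectors. The paper does this in one line via Corollary~\ref{cor:ref}: a rank-one projection $P$ lies in $A^\sim$ iff it commutes with $A$, i.e.\ iff its range is spanned by an eigenvector of $A$; hence (ii) immediately gives that $A$ and $B$ commute with the same rank-one projections and are therefore jointly diagonalisable in an orthonormal basis. You instead recover the eigenvectors as the level set $\{P:f_A(P)=1\}$ via the weighted harmonic--arithmetic mean inequality, which is a longer computation but is self-contained and makes explicit why the equality case singles out eigenvectors. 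Both routes are valid; the paper's is shorter, yours is more hands-on. Your explicit graph-colouring consistency argument for $m\ge 3$ is exactly what the paper's ``from here it is easy'' is hiding, and you have carried it out correctly.
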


\begin{proof}
	The directions (i)$\iff$(ii)$\Longleftarrow$(iii) are trivial, so we shall only prove the (ii)$\Longrightarrow$(iii) direction.
	First, let us consider the two-dimensional case.	
	As we saw in the proof of Lemma \ref{lem:BlockDiagSca}, we have $A^\sim \cap \FiniteRank_1(H) = \FiniteRank_1(H)$ if and only if $A$ is a scalar effect (see the first set of bullet points there).
	Therefore, without loss of generality we may assume that none of $A$ and $B$ are scalar effects.
	Notice that by Lemma \ref{lem:properties}, $A$ and $B$ commute with exactly the same rank-one projections, hence $A$ and $B$ possess the forms in \eqref{eq:BlockDiagForm} with some one-dimensional subspaces $H_1$ and $H_2$, and an easy application of Lemma \ref{lem:BlockDiagSca} gives (iii).
	
	As for the general case, since again $A$ and $B$ commute with exactly the same rank-one projections, we can jointly diagonalise them with respect to some orthonormal basis $\{e_j\}_{j=1}^n$, where $n = \dim H$:
	\begin{equation*}
		A = \left[\begin{matrix}
				\lambda_1 & 0 & \dots & 0 & 0 \\
				0 & \lambda_2 & \dots & 0 & 0 \\
				\vdots & & \ddots & & \vdots \\
				0 & 0 & \dots & \lambda_{n-1} & 0 \\
				0 & 0 & \dots & 0 & \lambda_n \\
			\end{matrix}\right]
		\quad \text{and} \quad
		B = \left[\begin{matrix}
				\mu_1 & 0 & \dots & 0 & 0 \\
				0 & \mu_2 & \dots & 0 & 0 \\
				\vdots & & \ddots & & \vdots \\
				0 & 0 & \dots & \mu_{n-1} & 0 \\
				0 & 0 & \dots & 0 & \mu_n \\
			\end{matrix}\right].
	\end{equation*}
	Of course, for any two distinct $i, j \in \{1,\dots, n\}$ we have the following equation for the strength functions:
	\begin{equation*}
		\Lambda(A, P) + \Lambda(A^\perp, P) = \Lambda(B, P) + \Lambda(B^\perp, P) \qquad (P \in P_1(\C\cdot e_i + \C\cdot e_j)),
	\end{equation*}
	which instantly implies 
	\begin{equation*}
		\left[\begin{matrix}
			\mu_i & 0 \\
			0 & \mu_j
		\end{matrix}\right]^\sim \cap \FiniteRank_1(\C\cdot e_i + \C\cdot e_j)
		=
		\left[\begin{matrix}
			\lambda_i & 0 \\
			0 & \lambda_j
		\end{matrix}\right]^\sim \cap \FiniteRank_1(\C\cdot e_i + \C\cdot e_j).
	\end{equation*}
	By the two-dimensional case this means that we have one of the following cases:
	\begin{itemize}
		\item $\lambda_i = \lambda_j$ and $\mu_i = \mu_j$,
		\item $\lambda_i \neq \lambda_j$ and either $\mu_i = \lambda_i$ and $\mu_j = \lambda_j$, or $\mu_i = 1-\lambda_i$ and $\mu_j = 1-\lambda_j$.
	\end{itemize}
	From here it is easy to conclude (iii).
\end{proof}

The commutant of an operator $T\in \Bdd(H)$ will be denoted by $T' := \{ S\in \Bdd(H) \colon ST=TS \}$, and more generally, if $\calM\subseteq \Bdd(H)$, then we set $\calM' := \cap\{ T' \colon T\in \calM\}$.
We shall use the notations $T'' := (T')'$ and $\calM'' := (\calM')'$ for the double commutants.

\begin{lemma}\label{lem:CoexSubset}
	For any $A,B \in \Effect(H)$ the following three assertions hold:
	\begin{itemize}
		\item[\textup{(a)}] If $A^\sim \subseteq B^\sim$, then $B\in A''$.
		\item[\textup{(b)}] If $\dim H \leq \aleph_0$ and $A^\sim \subseteq B^\sim$, then there exists a Borel function $f\colon [0,1] \to [0,1]$ such that $B = f(A)$.
		\item[\textup{(c)}] If $B$ is a convex combination of $A, A^\perp, 0$ and $I$, then $A^\sim \subseteq B^\sim$.
	\end{itemize}
\end{lemma}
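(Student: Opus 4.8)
\textbf{Proof plan for Lemma \ref{lem:CoexSubset}.}

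The plan is to deal with the three parts in order, since (b) builds on (a) and (c) is essentially independent. For part (c), the quickest route is to invoke Lemma \ref{lem:MANIA}: suppose $B = a A + b A^\perp + c I$ is a convex combination, where we may absorb the $0$-coefficient, so $a, b, c \ge 0$ and $a + b + c = 1$. I want to show that every $C \sim A$ also satisfies $C \sim B$. Given $C \sim A$, Lemma \ref{lem:MANIA}(ii) supplies $M \le A$ and $N \le I - A$ with $M + N = C$. Now observe that $aA + c I = a A + c(A + A^\perp) \ge$ something of the form $M' \le B$ and symmetrically for the $A^\perp$ part; more carefully, I would write $M'' := a M + (b+c) \cdot 0$-type combinations and check directly that a suitable decomposition of $B$ works. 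Actually the cleanest formulation: since $B$ is a convex combination of $A$, $A^\perp$, $0$, $I$, it suffices (by convexity of $A^\sim$, Lemma \ref{lem:WOTclosed}, together with Corollary \ref{cor:PerpCoex} and Lemma \ref{lem:properties}(a) giving $A, A^\perp, 0, I \in A^\sim$) to note each of the four extreme points lies in $A^\sim$, hence so does their convex combination $B$. That disposes of (c) in one line.

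For part (a), the goal is $B \in A'' = A^c{}^c$ (the bicommutant, which for a single self-adjoint operator equals the von Neumann algebra it generates, hence all operators in $A''$ are functions of $A$ in the separable case — but in general I only get membership in the bicommutant). The strategy is to show $B$ commutes with everything in $A^c$, i.e.\ with every effect commuting with $A$; since $A^c$ spans (as a real-linear combination, up to scalars) a set whose commutant is $A''$, and since the effects in $A^c$ already generate $A'$ as an algebra, this suffices. So let $S \in \Bdd(H)$ with $SA = AS$; I must show $SB = BS$. It is enough to check this for $S$ a projection $P \in A^c \cap \Proj(H)$ (self-adjoint unitaries / spectral projections of operators in $A'$ generate $A'$). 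For such a $P$: because $P$ commutes with $A$, the projection $P$ lies in $A^c \subseteq A^\sim$ (Lemma \ref{lem:properties}(c)), and by symmetry and Corollary \ref{cor:ref}, $A \in P^\sim = P^c$. Now here is the key point: with respect to the decomposition $H = PH \oplus (I-P)H$, both $A$ and $B$ are block-diagonal — $A$ because $P \in A^c$; $B$ I must \emph{deduce}. To get $B$ block-diagonal, apply the hypothesis $A^\sim \subseteq B^\sim$ together with the observation that $P$ and $I - P$ are both in $A^\sim$, and more importantly that a rich family of rank-one effects supported off-diagonal fails to be coexistent with $A$ and hence (by the containment, read contrapositively) must fail to be coexistent with $B$ too — forcing $B$ to respect the same block structure. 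Concretely: $P \in A^\sim \subseteq B^\sim$ gives $P \in B^c$ by Corollary \ref{cor:ref}, so $PB = BP$, i.e.\ $B$ is block-diagonal for $P$. Since this holds for every spectral projection $P$ of every operator in $A'$, we get $B \in (A')' = A''$.

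For part (b), assume $\dim H \le \aleph_0$. From (a) we already have $B \in A''$. Now invoke the standard fact that for a self-adjoint operator $A$ on a separable Hilbert space, $A'' = \{f(A) : f \text{ bounded Borel on } \sigma(A)\}$ (the von Neumann algebra generated by $A$, which is maximal abelian when $A$ has multiplicity one, and in general still equals the bicommutant, whose elements are all Borel functions of $A$ — this uses separability to get a cyclic-vector / direct-integral decomposition, or simply the spectral theorem plus the multiplicity theory of self-adjoint operators). Hence $B = f(A)$ for some bounded Borel $f$; rescaling, since $0 \le B \le I$ we may take $f : [0,1] \to [0,1]$ (replace $f$ by $\max\{0, \min\{1, f\}\}$ off the spectrum, which does not change $f(A)$).

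The main obstacle is part (a): proving that the purely relational hypothesis $A^\sim \subseteq B^\sim$ forces the \emph{algebraic} conclusion $B \in A''$. The delicate step is extracting commutation of $B$ with an arbitrary spectral projection $P$ of an operator in $A'$ — the clean argument above routes it through Corollary \ref{cor:ref} (projections that are coexistent with $B$ actually commute with $B$), so the real content is just $P \in A^\sim$, which is Lemma \ref{lem:properties}(c). One should double-check that $A' $ is indeed generated, as a von Neumann algebra, by the spectral projections of its self-adjoint elements (true: every operator in a von Neumann algebra is a linear combination of self-adjoints, each a norm-limit of linear combinations of its spectral projections), so that commuting with all such $P$ yields $B \in A''$.
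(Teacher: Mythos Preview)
Your arguments for (a) and (b) are essentially the paper's: for (a), the paper likewise takes an arbitrary $C\in A'$, passes to its real and imaginary parts, and then to their spectral projections $E(\Delta)$, which lie in $A^c\subseteq A^\sim\subseteq B^\sim$ and hence (via Corollary~\ref{cor:ref}) commute with $B$; your version just starts directly from projections in $A'$ and appeals to the fact that these generate $A'$ as a von Neumann algebra. For (b), both you and the paper simply quote the standard Borel functional calculus result.

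In (c), however, there is a genuine slip. You write: ``by convexity of $A^\sim$ \ldots\ each of the four extreme points lies in $A^\sim$, hence so does their convex combination $B$.'' That argument only proves $B\in A^\sim$, i.e.\ $A\sim B$, which is not the assertion. What you need to show is $A^\sim\subseteq B^\sim$: for every $C$ with $C\sim A$, also $C\sim B$. The convexity you should invoke is that of $C^\sim$, not of $A^\sim$. The correct one-line argument (which is exactly the paper's) is: if $C\sim A$, then $A,A^\perp,0,I\in C^\sim$ (by symmetry of $\sim$, Corollary~\ref{cor:PerpCoex}, and Lemma~\ref{lem:properties}(a)), and since $C^\sim$ is convex (Lemma~\ref{lem:WOTclosed}), the convex combination $B$ lies in $C^\sim$, i.e.\ $C\sim B$. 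Your first attempt at (c), via Lemma~\ref{lem:MANIA} with explicit $M,N$, could also be made to work, but the convexity route is cleaner once applied to the right set.
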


\begin{proof}
	\emph{(a):} Assume that $C\in A'$. Our goal is to show $B\in C'$.
	We express $C$ in the following way:
	\begin{equation*}
		C = C_{\Re} + i C_{\Im}, \; C_{\Re} = \frac{C+C^*}{2}, \; C_{\Im} = \frac{C-C^*}{2i}
	\end{equation*}
	where $C_{\Re}$ and $C_{\Im}$ are self-adjoint (they are usually called the real and imaginary parts of $C$).
	Since $A$ is self-adjoint, $C^* \in A'$, hence $C_{\Re}, C_{\Im} \in A'$.
	Let $E_{\Re}$ and $E_{\Im}$ denote the projection-valued spectral measures of $C_{\Re}$ and $C_{\Im}$, respectively.
	By the spectral theorem (\cite[Theorem IX.2.2]{Conway}), Lemma \ref{lem:properties} and Corollary \ref{cor:ref}, we have $E_{\Re}(\Delta), E_{\Im}(\Delta) \in A^c \subseteq A^\sim \subseteq B^\sim$, therefore also $E_{\Re}(\Delta), E_{\Im}(\Delta) \in B'$ for all $\Delta \in \Borel_\R$, which gives $C\in B'$.

	\emph{(b):} This is an easy consequence of \cite[Proposition IX.8.1 and Lemma IX.8.7]{Conway}.

	\emph{(c):} If $A\sim C$, then also $A^\perp, 0$ and $I\sim C$. Hence by the convexity of $C^\sim$ we obtain $B\sim C$.
\end{proof}

Now, we are in the position to prove our first main result.

\begin{proof}[Proof of Theorem \ref{thm:CoexSet}]
	If $H$ is separable, then the equivalence (ii)$\iff$(iii) is straightforward by Lemma \ref{lem:WOTclosed}.
	For general $H$ the direction (i)$\Longrightarrow$(ii) is obvious, therefore we shall only prove (ii)$\Longrightarrow$(i), first in the separable, and then in the general case. 
	By Lemma \ref{lem:properties}, we may assume throughout the rest of the proof that $A$ and $B$ are non-scalar effects.
	We will denote the spectral subspace of a self-adjoint operator $T$ associated to a Borel set $\Delta\subseteq\R$ by $H_T(\Delta)$.
	
	\smallskip
	
	\emph{(ii)$\Longrightarrow$(i) in the separable case:}
	We split this part into two steps.
	
	\smallskip
	
	\emph{STEP 1:}
	Here, we establish two estimations, \eqref{eq:Estimation1} and \eqref{eq:Estimation2}, for the strength functions of $A$ and $B$ on certain subspaces of $H$.
	Let $\lambda_1, \lambda_2 \in \sigma(A), \lambda_1 \neq \lambda_2$ and $0 < \varepsilon < \tfrac{1}{2}|\lambda_1-\lambda_2|$.
	Then the spectral subspaces $H_1 = H_A\left( (\lambda_1-\varepsilon, \lambda_1+\varepsilon) \right)$ and $H_2 = H_A\left( (\lambda_2-\varepsilon, \lambda_2+\varepsilon) \right)$ are non-trivial and orthogonal.
	Set $H_3$ to be the orthogonal complement of $H_1\oplus H_2$, then the matrix of $A$ written in the orthogonal decomposition $H = H_1\oplus H_2 \oplus H_3$ is diagonal:
	\begin{equation*}
		A = \left[\begin{matrix}
			A_1 & 0 & 0 \\
			0 & A_2 & 0 \\
			0 & 0 & A_3
		\end{matrix}\right] \in \Bdd(H_1\oplus H_2 \oplus H_3).
	\end{equation*}
	Note that $H_3$ might be a trivial subspace.
	Since by Corollary \ref{cor:ref} $A$ and $B$ commute with exactly the same projections, the matrix of $B$ in $H = H_1\oplus H_2 \oplus H_3$ is also diagonal:
	\begin{equation*}
		B = \left[\begin{matrix}
			B_1 & 0 & 0 \\
			0 & B_2 & 0 \\
			0 & 0 & B_3
		\end{matrix}\right] \in \Bdd(H_1\oplus H_2 \oplus H_3).
	\end{equation*}
	At this point, let us emphasise that of course $H_j, A_j$ and $B_j$ ($j=1,2,3$) all depend on $\lambda_1, \lambda_2$ and $\varepsilon$, but in order to keep our notation as simple as possible, we will stick with these symbols.
	However, if at any point it becomes important to point out this dependence, we shall use for instance $B_j^{(\lambda_1, \lambda_2, \varepsilon)}$ instead of $B_j$.
	Similar conventions apply later on.
	
	Observe that by Corollary \ref{cor:kulon} we have
	\begin{equation*}
		\left[\begin{matrix}
					A_1 & 0 \\
					0 & A_2
		\end{matrix}\right]^\sim
		=
		\left[\begin{matrix}
					B_1 & 0 \\
					0 & B_2
		\end{matrix}\right]^\sim.
	\end{equation*}
	Now, we pick two arbitrary points $\mu_1\in \sigma(B_1)$ and $\mu_2\in \sigma(B_2)$.
	Then obviously, the following two subspaces are non-zero subspaces of $H_1$ and $H_2$, respectively:
	$$
		\widehat{H}_1 := (H_1)_{B_1}\big( (\mu_1-\varepsilon, \mu_1+\varepsilon) \big), \;\; 
		\widehat{H}_2 := (H_2)_{B_2}\big( (\mu_2-\varepsilon, \mu_2+\varepsilon) \big).
	$$
	Similarly as above, we have the following matrix forms where $\check{H}_j = H_j \ominus \widehat{H}_j$ $(j=1,2)$:
	$$
		B_1 = \left[\begin{matrix}
					\widehat{B}_1 & 0 \\
					0 & \check{B}_1
		\end{matrix}\right] \in \Bdd(\widehat{H}_1 \oplus \check{H}_1)
			\;\; \text{and} \;\;
		B_2 = \left[\begin{matrix}
					\widehat{B}_2 & 0 \\
					0 & \check{B}_2
		\end{matrix}\right] \in \Bdd(\widehat{H}_2 \oplus \check{H}_2)
	$$
	and
	$$
		A_1 = \left[\begin{matrix}
					\widehat{A}_1 & 0 \\
					0 & \check{A}_1
		\end{matrix}\right] \in \Bdd(\widehat{H}_1 \oplus \check{H}_1)
			\;\; \text{and} \;\;
		A_2 = \left[\begin{matrix}
					\widehat{A}_2 & 0 \\
					0 & \check{A}_2
		\end{matrix}\right] \in \Bdd(\widehat{H}_2 \oplus \check{H}_2).
	$$
	Note that $\check{H}_1$ or $\check{H}_2$ might be trivial subspaces.
	Again by Corollary \ref{cor:kulon}, we have 
	\begin{equation*}
		\left[\begin{matrix}
					\widehat{A}_1 & 0 \\
					0 & \widehat{A}_2
		\end{matrix}\right]^\sim
		=
		\left[\begin{matrix}
					\widehat{B}_1 & 0 \\
					0 & \widehat{B}_2
		\end{matrix}\right]^\sim.
	\end{equation*}
	Let us point out that by construction $\sigma(\widehat{A}_j) \subseteq [\lambda_j-\varepsilon,\lambda_j+\varepsilon]$ and $\sigma(\widehat{B}_j) \subseteq [\mu_j-\varepsilon,\mu_j+\varepsilon]$.
	Corollary \ref{cor:RankOneStrengthFunct} gives the following identity for the strength functions, where $\widehat{I}_j$ denotes the identity on $\widehat{H}_j$ $(j=1,2)$:
	\begin{align}\label{eq:WidehatStrength}
		\Lambda\left( \left[\begin{matrix}
					\widehat{A}_1 & 0 \\
					0 & \widehat{A}_2
		\end{matrix}\right], P \right) &+ 
		\Lambda\left( \left[\begin{matrix}
					\widehat{I}_1 - \widehat{A}_1 & 0 \\
					0 & \widehat{I}_2 - \widehat{A}_2
		\end{matrix}\right], P \right) \nonumber \\
		& =
		\Lambda\left( \left[\begin{matrix}
					\widehat{B}_1 & 0 \\
					0 & \widehat{B}_2
		\end{matrix}\right], P \right) + 
		\Lambda\left( \left[\begin{matrix}
					\widehat{I}_1 - \widehat{B}_1 & 0 \\
					0 & \widehat{I}_2 - \widehat{B}_2
		\end{matrix}\right], P \right) \quad \left(\forall\; P\in\Proj_1\left(\widehat{H}_1\oplus\widehat{H}_2\right)\right).
	\end{align}
	Define
	\begin{equation*}
		\Theta\colon \R \to [0,1], \quad \Theta(t) = 
		\left\{ \begin{matrix}
			0 & \text{if} \; t < 0 \\
			t & \text{if} \; 0 \leq t \leq 1 \\
			1& \text{if} \; 1 < t
		\end{matrix} \right.,
	\end{equation*}
	and notice that we have the following two estimations for all rank-one projections $P$:
	\begin{align}\label{eq:Estimation1}
		\Lambda&\left( \left[\begin{matrix}
					\Theta(\lambda_1-\varepsilon)\widehat{I}_1 & 0 \\
					0 & \Theta(\lambda_2-\varepsilon)\widehat{I}_2
		\end{matrix}\right], P \right) + 
		\Lambda\left( \left[\begin{matrix}
					\Theta(1-\lambda_1-\varepsilon)\widehat{I}_1 & 0 \\
					0 & \Theta(1-\lambda_2-\varepsilon)\widehat{I}_2
		\end{matrix}\right], P \right)  \nonumber \\
		& \leq \text{the expression in \eqref{eq:WidehatStrength}} \nonumber \\
		& \leq \Lambda\left( \left[\begin{matrix}
					\Theta(\lambda_1+\varepsilon)\widehat{I}_1 & 0 \\
					0 & \Theta(\lambda_2+\varepsilon)\widehat{I}_2
		\end{matrix}\right], P \right) + 
		\Lambda\left( \left[\begin{matrix}
					\Theta(1-\lambda_1+\varepsilon)\widehat{I}_1 & 0 \\
					0 & \Theta(1-\lambda_2+\varepsilon)\widehat{I}_2
		\end{matrix}\right], P \right)
	\end{align}
	and 
	\begin{align}\label{eq:Estimation2}
		\Lambda&\left( \left[\begin{matrix}
					\Theta(\mu_1-\varepsilon)\widehat{I}_1 & 0 \\
					0 & \Theta(\mu_2-\varepsilon)\widehat{I}_2
		\end{matrix}\right], P \right) + 
		\Lambda\left( \left[\begin{matrix}
					\Theta(1-\mu_1-\varepsilon)\widehat{I}_1 & 0 \\
					0 & \Theta(1-\mu_2-\varepsilon)\widehat{I}_2
		\end{matrix}\right], P \right) \nonumber \\
		& \leq \text{the expression in \eqref{eq:WidehatStrength}} \nonumber \\
		& \leq \Lambda\left( \left[\begin{matrix}
					\Theta(\mu_1+\varepsilon)\widehat{I}_1 & 0 \\
					0 & \Theta(\mu_2+\varepsilon)\widehat{I}_2
		\end{matrix}\right], P \right) + 
		\Lambda\left( \left[\begin{matrix}
					\Theta(1-\mu_1+\varepsilon)\widehat{I}_1 & 0 \\
					0 & \Theta(1-\mu_2+\varepsilon)\widehat{I}_2
		\end{matrix}\right], P \right).
	\end{align}
	Note that the above estimations hold for any arbitrarily small $\varepsilon$ and for all suitable choices of $\mu_1$ and $\mu_2$ (which of course depend on $\varepsilon$).
	
	\smallskip
	
	\emph{STEP 2:}
	Here we show that $B\in\{A,A^\perp\}$.
	Let us define the following set that depends only on $\lambda_j$:
	\begin{equation*}
		\calC_j = \calC_j^{(\lambda_j)} := \bigcap \left\{ \sigma\left( B_j^{(\lambda_1, \lambda_2, \varepsilon)} \right) \colon 0 < \varepsilon < \tfrac{1}{2}|\lambda_1 - \lambda_2| \right\}
		= \bigcap \left\{ \sigma\left( B|_{H_A((\lambda_j-\varepsilon,\lambda_j+\varepsilon))} \right) \colon 0 < \varepsilon \right\}
		 \quad (j=1,2).
	\end{equation*}	
	Notice that as this set is an intersection of monotonically decreasing (as $\varepsilon \searrow 0$), compact, non-empty sets, it must contain at least one element.
	Also, observe that if $\mu_1\in\calC_1$ and $\mu_2\in\calC_2$, then \eqref{eq:Estimation1} and \eqref{eq:Estimation2} hold for all $\varepsilon>0$.
	
	We proceed with proving that either $\calC_1 = \{\lambda_1\}$ and $\calC_2 = \{\lambda_2\}$, or $\calC_1 = \{1-\lambda_1\}$ and $\calC_2 = \{1-\lambda_2\}$ hold.
	Fix two arbitrary elements $\mu_1 \in \calC_1$ and $\mu_2 \in \calC_2$, and assume that neither $\lambda_1 = \mu_1$ and $\lambda_2 = \mu_2$, nor $\lambda_1 + \mu_1 = \lambda_2 + \mu_2 = 1$ hold. 
	From here our aim is to get a contradiction.
	As we showed in the proof of Lemma \ref{lem:BlockDiagSca}, there exists an $\alpha_0 \in \left(0,\tfrac{\pi}{2}\right)$ such that we have
	\begin{align*}
		\frac{1}{ \left(\tfrac{1}{\lambda_1}\right)\cdot \cos^2\alpha_0 + \left(\tfrac{1}{\lambda_2}\right)\cdot \sin^2\alpha_0 } &+ \frac{1}{ \left(\tfrac{1}{1-\lambda_1}\right)\cdot \cos^2\alpha_0 + \left(\tfrac{1}{1-\lambda_2}\right)\cdot \sin^2\alpha_0 } \\
		&\neq
		\frac{1}{ \left(\tfrac{1}{\mu_1}\right)\cdot \cos^2\alpha_0 + \left(\tfrac{1}{\mu_2}\right)\cdot \sin^2\alpha_0 } + \frac{1}{  \left(\tfrac{1}{1-\mu_1}\right)\cdot \cos^2\alpha_0 + \left(\tfrac{1}{1-\mu_2}\right)\cdot \sin^2\alpha_0 }\nonumber
	\end{align*}
	where we interpret both sides as in \eqref{eq:DiagStrength}.
	Notice that both summands on both sides depend continuously on $\lambda_1,\lambda_2,\mu_1$ and $\mu_2$.
	Therefore there exists an $\varepsilon>0$ small enough and a rank-one projection $P = P_{\cos\alpha_0 \widehat{x}_1+\sin\alpha_0 \widehat{x}_2}$, with $\widehat{x}_1\in\widehat{H}_1, \widehat{x}_2\in\widehat{H}_2, \|\widehat{x}_1\|=\|\widehat{x}_2\|=1$, such that the closed intervals bounded by the right- and left-hand sides of \eqref{eq:Estimation1}, and those of \eqref{eq:Estimation2} are disjoint -- which is a contradiction.
	
	Observe that as we can do the above for any two disjoint elements of the spectrum $\sigma(A)$, we can conclude that one of the following possibilities occur:
	\begin{equation}\label{eq:FirstCase}
		\{\lambda\} = \bigcap \left\{ \sigma\left( B|_{H_A((\lambda-\varepsilon, \lambda+\varepsilon))} \right) \colon \varepsilon > 0 \right\} \qquad (\lambda\in\sigma(A))
	\end{equation}
	or
	\begin{equation}\label{eq:SecondCase}
		\{1-\lambda\} = \bigcap \left\{ \sigma\left( B|_{H_A((\lambda-\varepsilon, \lambda+\varepsilon))} \right) \colon \varepsilon > 0 \right\} \qquad (\lambda\in\sigma(A)).
	\end{equation}

	From here, we show that \eqref{eq:FirstCase} implies $A=B$, and \eqref{eq:SecondCase} implies $B=A^\perp$.
	As the latter can be reduced to the case \eqref{eq:FirstCase}, by considering $B^\perp$ instead of $B$, we may assume without loss of generality that \eqref{eq:FirstCase} holds.
	By Lemma \ref{lem:CoexSubset} and \cite[Theorem IX.8.10]{Conway}, there exists a function $f\in L^\infty(\mu)$, where $\mu$ is a scalar-valued spectral measure of $A$, such that $B = f(A)$.
	Moreover, we have $B = A$ if and only if $f(\lambda) = \lambda$ $\mu$-a.e, so we only have to prove the latter equation.
	Let us fix an arbitrarily small number $\delta > 0$.
	By the spectral mapping theorem (\cite[Theorem IX.8.11]{Conway}) and \eqref{eq:FirstCase} we notice that for every $\lambda \in \sigma(A)$ there exists an $0 < \varepsilon_{\lambda} < \delta$ such that
	\begin{equation}\label{eq:SpectrumCloseToLambda}
		\mu-\mathrm{essran} \left( f|_{(\lambda-\varepsilon_{\lambda},\lambda+\varepsilon_{\lambda})} \right) = \sigma\left(B|_{H_A((\lambda-\varepsilon_{\lambda},\lambda+\varepsilon_{\lambda}))} \right) \subseteq (\lambda-\delta,\lambda+\delta),
	\end{equation}
	where $\mu-\mathrm{essran}$ denotes the essential range of a function with respect to $\mu$ (see \cite[Example IX.2.6]{Conway}).
	Now, for every $\lambda\in\sigma(A)$ we fix such an $\varepsilon_{\lambda}$.
	Clearly, the intervals $\{(\lambda-\varepsilon_{\lambda},\lambda+\varepsilon_{\lambda}) \colon \lambda\in\sigma(A)\}$ cover the whole spectrum $\sigma(A)$, which is a compact set. 
	Therefore we can find finitely many of them, let's say $\lambda_1, \dots, \lambda_n$ so that 
	\begin{equation*}
		\sigma(A) 
		\subseteq \bigcup_{j=1}^{n} \left(\lambda_j-\varepsilon_{\lambda_j},\lambda_j+\varepsilon_{\lambda_j} \right).
	\end{equation*}	
	Finally, we define the function
	\begin{equation*}
		h(\lambda) = \lambda_j, \text{ where } |\lambda-\lambda_{i}| \geq \varepsilon_{\lambda_i} \text{ for all } 1\leq i < j \text{ and } |\lambda-\lambda_{j}| < \varepsilon_{\lambda_j}.
	\end{equation*}
	By definition we have $\| h - \mathrm{id}_{\sigma(A)} \|_{\infty} \leq \delta$ where the $\infty$-norm is taken with respect to $\mu$ and $\mathrm{id}_{\sigma(A)}(\lambda) = \lambda$ $(\lambda\in\sigma(A))$.
	But notice that by \eqref{eq:SpectrumCloseToLambda} we also have $\| h - f \|_{\infty} \leq \delta$, and hence $\| f - \mathrm{id}_{\sigma(A)} \|_\infty \leq 2\delta$.
	As this inequality holds for all positive $\delta$, we actually get that $f(\lambda) = \lambda$ for $\mu$-a.e.~$\lambda$.
	
	\smallskip
	
	\emph{(ii)$\Longrightarrow$(i) in the non-separable case:}
	It is well-known that there exists an orthogonal decomposition $H = \oplus_{i\in\mathcal{I}} H_i$ such that each $H_i$ is a non-trivial, separable, invariant subspace of $A$, see for instance \cite[Proposition IX.4.4]{Conway}.
	Since $A$ and $B$ commute with exactly the same projections, both are diagonal with respect to the decomposition $H = \oplus_{i\in\mathcal{I}} H_i$:
	\begin{equation*}
		A = \oplus_{i\in \mathcal{I}} A_i \;\; \text{and} \;\; B = \oplus_{i\in \mathcal{I}} B_i \in \Effect(\oplus_{i\in \mathcal{I}} H_i).
	\end{equation*}
	By Corollary \ref{cor:kulon} we have $A_i^\sim = B_i^\sim$ for all $i\in\mathcal{I}$, therefore the separable case implies
	\begin{equation*}
		\text{either} \; A_i = B_i, \;\; \text{or} \; B_i = A_i^\perp, \;\; \text{or} \; A_i, B_i \in \Sca(H_i) \qquad (i\in\mathcal{I}).
	\end{equation*}
	Without loss of generality we may assume from now on that there exists an $i_0\in\mathcal{I}$ so that $A_{i_0}$ is not a scalar effect.
	(In case all of them are scalar, we simply combine two subspaces $H_{i_1}$ and $H_{i_2}$ so that $\sigma(A_{i_1})\neq\sigma(A_{i_2})$).
	This implies either $A_{i_0} = B_{i_0}$, or $B_{i_0} = A_{i_0}^\perp$.
	By considering $B^\perp$ instead of $B$ if necessary, we may assume from now on that $A_{i_0} = B_{i_0}$ holds.	
	
	Finally, let $i_1 \in \mathcal{I} \setminus \{i_0\}$ be arbitrary, and let us consider the orthogonal decomposition $H = \oplus_{i\in \mathcal{I}\setminus\{i_0,i_1\}} H_i \oplus K$ where $K = H_{i_0}\oplus H_{i_1}$.
	Similarly as above, we obtain either $A_{i_0}\oplus A_{i_1} = B_{i_0} \oplus B_{i_1}$, or $B_{i_0}\oplus B_{i_1} = A_{i_0}^\perp \oplus A_{i_1}^\perp$, but since $A_{i_0} = B_{i_0}$, we must have $A_{i_1} = B_{i_1}$. 
	As this holds for arbitrary $i_1$, the proof is complete.
\end{proof}

Now, we are in the position to give an alternative proof of Moln\'ar's theorem which also extends to the two-dimensional case.

\begin{proof}[Proof of Theorem \ref{thm:2dM} and Moln\'ar's theorem]
	By (a) of Lemma \ref{lem:properties} and \eqref{eq:coex} we obtain $\phi(\Sca(H)) = \Sca(H)$, moreover, the property \eqref{eq:ord} implies the existence of a strictly increasing bijection $g\colon [0,1] \to [0,1]$ such that $\phi(\lambda I) = g(\lambda) I$ for every $\lambda\in [0,1]$.
	By Theorem \ref{thm:CoexSet} we conclude
	\begin{equation*}
		\phi(A^\perp) = \phi(A)^\perp \qquad (A\in \Effect(H)\setminus \Sca(H)).
	\end{equation*}
	We only have to show that the same holds for scalar operators, because then the theorem is reduced to Ludwig's theorem.
	For any effect $A$ and any set of effects ${\cal S}$ let us define the following sets $A^\leq := \{B\in \Effect(H)\colon A\leq B\}$, $A^\geq := \{B\in \Effect(H)\colon A\geq B\}$ and ${\cal S}^\perp := \{B^\perp \colon B\in {\cal S}\}$.
	Observe that for any $s,t\in [0,1]$ we have
	\begin{equation}\label{eq:sItI}
		\left((sI)^\leq \cap (tI)^\geq \setminus \Sca(H)\right)^\perp = (sI)^\leq \cap (tI)^\geq \setminus \Sca(H) \neq \emptyset
	\end{equation}
	if and only if $t=1-s$ and $s< \tfrac{1}{2}$.
	Thus for all $s < \tfrac{1}{2}$  we obtain 
	\begin{align*}
		\emptyset \neq \left((g(s)I)^\leq \cap (g(1-s)I)^\geq \setminus \Sca(H)\right)^\perp = \phi\left(\left((sI)^\leq \cap ((1-s)I)^\geq \setminus \Sca(H)\right)^\perp\right) \\
		= \phi\left((sI)^\leq \cap ((1-s)I)^\geq \setminus \Sca(H)\right) = (g(s)I)^\leq \cap (g(1-s)I)^\geq \setminus \Sca(H),
	\end{align*}
	which by \eqref{eq:sItI} implies $g(1-s) = 1-g(s)$ and $g(s)< \tfrac{1}{2}$, therefore we indeed have \eqref{eq:perp} for every effect. 
\end{proof}

%-------------------------------------------------------------------------------------------------------

\section{Proof of Theorem \ref{thm:main} in two dimensions}

In this section we prove our other main theorem for qubit effects.
In order to do that we need to prove a few preparatory lemmas. 
We start with a characterisation of rank-one projections in terms of coexistence.

\begin{lemma}\label{lem:2dprojchar}
	For any $A\in \Effect(\C^2)$ the following are equivalent:
	\begin{itemize}
		\item[\textup{(i)}] there are no effects $B\in \Effect(\C^2)$ such that $B^\sim\subsetneq A^\sim$,
		\item[\textup{(ii)}] $A \in \Proj_1(\C^2)$.
	\end{itemize}
\end{lemma}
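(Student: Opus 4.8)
The plan is to prove the two implications separately, each time reducing to the elementary structure of $\Proj(\C^2)$ together with Corollary~\ref{cor:0-neighbourhood}. The guiding observation is that, by Lemma~\ref{lem:properties}(b) and Corollary~\ref{cor:ref}, a non-scalar effect $B=\mu_1 Q+\mu_2(I-Q)$ (with $Q\in\Proj_1(\C^2)$ and $\mu_1\neq\mu_2$) commutes with exactly the rank-one projections $Q$ and $I-Q$, so that $B^\sim\cap\Proj_1(\C^2)=B^c\cap\Proj_1(\C^2)=\{Q,I-Q\}$; hence an inclusion $B^\sim\subseteq A^\sim$ between non-scalar effects already forces $A$ and $B$ to be simultaneously diagonal.

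For \textup{(ii)}$\Longrightarrow$\textup{(i)} I would take $A=P\in\Proj_1(\C^2)$, assume $B\in\Effect(\C^2)$ with $B^\sim\subseteq P^\sim=P^c$, and show $B^\sim=P^\sim$, so that the inclusion is never strict. First, $B$ cannot be scalar, since then $B^\sim=\Effect(\C^2)\not\subseteq P^c$ by Lemma~\ref{lem:properties}(a). So $B=\mu_1 Q+\mu_2(I-Q)$ with $\mu_1\neq\mu_2$, and by the observation above $\{Q,I-Q\}=B^\sim\cap\Proj_1(\C^2)\subseteq P^c\cap\Proj_1(\C^2)=\{P,I-P\}$, so after relabelling $B=\mu_1 P+\mu_2(I-P)$. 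Finally, if $\{\mu_1,\mu_2\}\neq\{0,1\}$, then $0\notin\sigma(B)$ or $1\notin\sigma(B)$, so Corollary~\ref{cor:0-neighbourhood} yields $\varepsilon>0$ with $\{C\in\Effect(\C^2)\colon C\leq\varepsilon I\}\subseteq B^\sim$; choosing $R\in\Proj_1(\C^2)\setminus\{P,I-P\}$, the effect $\varepsilon R$ lies in $B^\sim$ but not in $P^c=P^\sim$, contradicting $B^\sim\subseteq P^\sim$. Hence $\{\mu_1,\mu_2\}=\{0,1\}$, i.e.~$B\in\{P,P^\perp\}$, and then $B^\sim=P^\sim$ by Corollary~\ref{cor:PerpCoex}.

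For \textup{(i)}$\Longrightarrow$\textup{(ii)} I would argue the contrapositive: given $A\notin\Proj_1(\C^2)$, I exhibit a $B$ with $B^\sim\subsetneq A^\sim$. If $A\in\Sca(\C^2)$, then $A^\sim=\Effect(\C^2)$ by Lemma~\ref{lem:properties}(a), and any $B=P\in\Proj_1(\C^2)$ works, since $P^\sim=P^c$ omits $\varepsilon R$ for $R\in\Proj_1(\C^2)\setminus\{P,I-P\}$. If $A$ is non-scalar but not a rank-one projection, I write $A=\lambda_1 P+\lambda_2(I-P)$ with $\lambda_1\neq\lambda_2$ and $\{\lambda_1,\lambda_2\}\neq\{0,1\}$ (the latter because in $\C^2$ the only non-scalar projections are the rank-one ones), and take $B=P$: since $A$ is a polynomial in $P$ and $P$ a polynomial in $A$, we have $A^c=P^c$, so $P^\sim=P^c=A^c\subseteq A^\sim$ by Lemma~\ref{lem:properties}(b) and (c); and since $0\notin\sigma(A)$ or $1\notin\sigma(A)$, Corollary~\ref{cor:0-neighbourhood} again provides $\varepsilon R\in A^\sim\setminus P^c=A^\sim\setminus P^\sim$, so $P^\sim\subsetneq A^\sim$.

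I do not expect a genuine obstacle here once Corollary~\ref{cor:0-neighbourhood} is available; the only delicate point is the case analysis on whether the spectrum contains $0$ or $1$. Conceptually this is the whole content: a rank-one projection is coexistent with as few effects as the commutant of a rank-one projection allows, whereas for any other effect the spectrum misses $0$ or $1$, and then an entire operator-norm ball around $0$ becomes coexistent with it, breaking minimality of its coexistence set. (One could also try to invoke Corollary~\ref{cor:CoexSet2Dim} to shortcut the two-dimensional spectral bookkeeping, but since that result is phrased for \emph{equalities} rather than inclusions of coexistence sets, the direct argument above seems cleaner.)
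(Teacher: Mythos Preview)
Your proof is correct, and the \textup{(i)}$\Longrightarrow$\textup{(ii)} direction matches the paper's argument essentially verbatim (the paper dismisses the scalar case as trivial at the outset and then takes $B\in\Proj_1(\C^2)\cap A^c$ exactly as you do).

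For \textup{(ii)}$\Longrightarrow$\textup{(i)} the paper takes a shorter route: rather than invoking Corollary~\ref{cor:0-neighbourhood} a second time to force $\{\mu_1,\mu_2\}=\{0,1\}$, it simply observes that $B^\sim\subsetneq A^\sim$ with $A\in\Proj_1(\C^2)$ would entail $B^c\subsetneq A^c$ (since $A^\sim=A^c\supsetneq B^\sim\supseteq B^c$, and equality $B^c=A^c$ would give $A^\sim=A^c=B^c\subseteq B^\sim$), which is impossible because in $\C^2$ the commutant of any non-scalar effect is a maximal abelian family of effects. Your version trades this one-line commutant observation for a second application of Corollary~\ref{cor:0-neighbourhood}; this costs a few more lines but has the mild advantage of explicitly identifying $B\in\{P,P^\perp\}$ rather than reaching a contradiction, and of not relying on the reader to unpack why $B^c\subsetneq A^c$ fails in two dimensions.
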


\begin{proof}
	The case when $A\in \Sca(\C^2)$ is trivial, therefore we may assume otherwise throughout the proof.
	
	\emph{(i)$\Longrightarrow$(ii):}
	Suppose that $A \notin \Proj_1(\C^2)$, then by Corollary \ref{cor:0-neighbourhood} there exists an $\varepsilon>0$ such that $\{C\in \Effect(\C^2) \colon C \leq \varepsilon I\} \subseteq A^\sim$.
	Let $B\in\Proj_1(\C^2)\cap A^c$, then we have $B^\sim = B^c = A^c\subseteq A^\sim$.
	But it is very easy to find a $C\in \Effect(\C^2)$ such that $C \leq \varepsilon I$ and $C\notin B^c$, therefore we conclude $B^\sim\subsetneq A^\sim$.
	
	\emph{(ii)$\Longrightarrow$(i):}
	If $A \in \Proj_1(\C^2)$, $B\in \Effect(\C^2)$ and $B^\sim\subsetneq A^\sim$, then also $B^c\subsetneq A^c$, which is impossible.
\end{proof}

Note that the above statement does not hold in higher dimensions, see the final section of this paper for more details.
We continue with a characterisation of rank-one and ortho-rank-one qubit effects in terms of coexistence.

\begin{lemma}\label{lem:F1order}
	Let $A\in \Effect(\C^2)\setminus \Sca(\C^2)$. Then the following are equivalent:
	\begin{itemize}
		\item[\textup{(i)}] $A$ or $A^\perp \in\FiniteRank_1(\C^2)\setminus\Proj_1(\C^2)$,
		\item[\textup{(ii)}] There exists at least one $B\in \Effect(\C^2)$ such that $B^\sim \subsetneq A^\sim$, and for every such pair of effects $B_1, B_2$ we have either $B_1^\sim \subseteq B_2^\sim$, or $B_2^\sim \subseteq B_1^\sim$.
	\end{itemize}
Moreover, if (i) holds, i.e.~$A$ or $A^\perp = tP$ with $P\in\Proj_1(\C^2)$ and $0<t<1$, then we have $B^\sim \subseteq A^\sim$ if and only if $B$ or $B^\perp = sP$ with some $t\leq s\leq 1$.
\end{lemma}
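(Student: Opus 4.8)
\textbf{Plan for the proof of Lemma \ref{lem:F1order}.}
The strategy is to reduce everything to concrete computations with qubit effects written in diagonal form. Since $A$ is non-scalar, Corollary \ref{cor:ref} tells us that $A$ commutes with exactly the same rank-one projections as any effect $B$ with $B^\sim\subseteq A^\sim$ (indeed, $B^c\cap\Proj_1 \supseteq A^c\cap\Proj_1$ by the contrapositive direction, but more carefully one gets the reverse), so after fixing the orthonormal eigenbasis of $A$ we may write $A=\mathrm{Diag}(\lambda_1,\lambda_2)$ and then $B$ must also be diagonal in that basis, $B=\mathrm{Diag}(\mu_1,\mu_2)$; this is where Lemma \ref{lem:BlockDiagSca} (with both $H_1,H_2$ one-dimensional) becomes the workhorse. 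For the last ("moreover") assertion, I would take $A=tP$ with $P=\mathrm{Diag}(1,0)$, so $A=\mathrm{Diag}(t,0)$, and ask: for which diagonal $B=\mathrm{Diag}(\mu_1,\mu_2)$ is $B^\sim\subseteq A^\sim$? By Lemma \ref{lem:CoexSubset}(c) every convex combination of $A,A^\perp,0,I$ has larger-or-equal $\sim$-set, so among diagonal effects the candidates with $B^\sim\subseteq A^\sim$ ought to be exactly those of the form $sP$ or $(sP)^\perp$ with $s\geq t$; the containment $B^\sim\subseteq A^\sim$ for $B=sP$, $s\geq t$, follows since $sP$ is a convex combination of $tP$ and $P=\lim$, more directly since $tP\leq sP$... but $\leq$ is not what we need — instead note $B^\sim\subseteq A^\sim$ should be established via Corollary \ref{cor:RankOneStrengthFunct}: $A^\sim\cap\FiniteRank_1$ is determined by $\Lambda(A,\cdot)+\Lambda(A^\perp,\cdot)$, and one computes directly (Busch–Gudder) that for $A=\mathrm{Diag}(t,0)$ this strength sum, evaluated at $P_{\cos\alpha\,e_1+\sin\alpha\,e_2}$, dominates pointwise the corresponding sum for $sP=\mathrm{Diag}(s,0)$ when $s\geq t$. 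The converse — that these are the \emph{only} $B$ with $B^\sim\subseteq A^\sim$ — is handled by Lemma \ref{lem:BlockDiagSca}(iii): writing $A=\mathrm{Diag}(t,0)$ and invoking $A^\sim\subseteq B^\sim$ is false unless... actually one applies (iii) in the form that $A^\sim\cap\FiniteRank_1\subseteq B^\sim\cap\FiniteRank_1$ forces $B^\sim\cap\FiniteRank_1$ to be at least as large; combined with $B$ diagonal and the explicit description of which diagonal pairs give nested strength-sums, one pins down $B\in\{sP,(sP)^\perp:s\geq t\}$.

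\textbf{Main line: (i)$\Rightarrow$(ii) and the structure of the $\sim$-sets.}
Assume $A=tP$ with $0<t<1$, $P\in\Proj_1(\C^2)$ (the case $A^\perp\in\FiniteRank_1$ is symmetric by Corollary \ref{cor:PerpCoex}). First I would produce one $B$ with $B^\sim\subsetneq A^\sim$: take $B=P$ itself; then $B^\sim=B^c=P^c$ by Lemma \ref{lem:properties}(b), and one checks $P^c\subsetneq A^\sim$ — the inclusion holds by Corollary \ref{cor:RankOneStrengthFunct}-type reasoning (every $C$ commuting with $P$ is coexistent with $tP$), and it is strict because, e.g., $tP$ is coexistent with many non-commuting rank-one effects (use Corollary \ref{cor:0-neighbourhood}: all $C\leq\varepsilon I$ are in $(tP)^\sim$, since $t\in\sigma(tP)$... wait, need $0\notin\sigma$ or $1\notin\sigma$; here $1\notin\sigma(tP)=\{0,t\}$, so indeed a whole norm-neighbourhood of $0$ lies in $A^\sim$ but not in $P^c$). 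Then, for the nestedness in (ii): given $B_1,B_2$ with $B_i^\sim\subsetneq A^\sim$, by the previous paragraph's analysis each $B_i$ is (up to $\perp$) of the form $s_iP$ with $s_i\geq t$, and the strength-sum functions for $s_1P$ and $s_2P$ are pointwise comparable (the family $\{\Lambda(sP,\cdot)+\Lambda((sP)^\perp,\cdot):s\in[0,1]\}$ is totally ordered pointwise — a direct Busch–Gudder computation), hence $B_1^\sim\cap\FiniteRank_1$ and $B_2^\sim\cap\FiniteRank_1$ are nested, and then $B_1^\sim,B_2^\sim$ are nested by Corollary \ref{cor:CoexSet2Dim} (which upgrades nesting of the rank-one parts to nesting of the full $\sim$-sets in finite dimensions — modulo checking the inclusion version of that corollary, which follows from its proof).

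\textbf{The converse (ii)$\Rightarrow$(i).}
Here I assume $A$ is non-scalar and \emph{not} of the form $sP$ or $(sP)^\perp$ for any rank-one $P$ and $s\in(0,1)$ — equivalently $A=\mathrm{Diag}(\lambda_1,\lambda_2)$ with $\lambda_1\neq\lambda_2$ and $\{\lambda_1,\lambda_2\}\neq\{\lambda,0\},\{\lambda,1\}$, i.e. both $\lambda_i\in(0,1)$ and $\lambda_1+\lambda_2\neq 1$... actually the excluded set is: $A\in\Proj_1$, or $A^\perp\in\Proj_1$, or $A\in\FiniteRank_1\cup\FiniteRank_1^\perp$; the remaining non-scalar $A$ have $\lambda_1,\lambda_2\in(0,1)$ with $\lambda_1\neq\lambda_2$. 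For such $A$ I would exhibit two effects $B_1,B_2$ with $B_i^\sim\subsetneq A^\sim$ but neither $B_1^\sim\subseteq B_2^\sim$ nor $B_2^\sim\subseteq B_1^\sim$, contradicting (ii). Natural candidates: $B_1=\lambda_1P_1+\lambda_2P_2$ shifted toward one projection and $B_2$ toward the other, or more simply $B_1=$ a rank-one truncation like $\mathrm{Diag}(\lambda_1',\lambda_2)$ and $B_2=\mathrm{Diag}(\lambda_1,\lambda_2')$ with the perturbations chosen (via Lemma \ref{lem:CoexSubset}(c), taking convex combinations with $0,I,A,A^\perp$) to keep $B_i^\sim\subseteq A^\sim$ while making their strength-sum graphs cross. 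The main obstacle I anticipate is precisely this last construction: verifying strict inclusion $B_i^\sim\subsetneq A^\sim$ together with the non-nestedness requires a careful choice and a genuine (if elementary) analysis of the two-parameter family of functions $\calT_{\mathrm{Diag}(\mu_1,\mu_2)}$ from the proof of Lemma \ref{lem:BlockDiagSca}, showing two of them can cross strictly inside $(0,1)$ while both lying below $\calT_A$; the convexity of $A^\sim$ (Lemma \ref{lem:WOTclosed}) and the $F(x,y)=F(1-x,1-y)$ injectivity already worked out there should make this manageable. An alternative, possibly cleaner route for (ii)$\Rightarrow$(i) is contrapositive via the explicit "moreover" clause: if $A$ is in the good class (i), the moreover clause gives a totally ordered chain of $\sim$-sets below $A^\sim$, forcing (ii); whereas if $A$ is non-scalar but not in (i), then $A$ dominates (in the $\sim$-sense) at least two incomparable rank-one-type effects $sP$, $sQ$ for two \emph{distinct} rank-one projections $P,Q$ (namely the two spectral projections of $A$, with suitable $s$), and one shows $(sP)^\sim$ and $(sQ)^\sim$ are incomparable by evaluating strength sums at $P$ and at $Q$ — this seems the least computational path and is the one I would try to push through first.
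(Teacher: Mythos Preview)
Your broad strategy --- diagonalise, reduce to comparing strength-sum functions $T_B(\alpha)\le T_A(\alpha)$, then for (ii)$\Rightarrow$(i) exhibit two incomparable sub-$\sim$-sets coming from the two spectral projections --- is the same as the paper's. But two concrete ingredients are missing from your sketch, and one step is wrong as stated.

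\textbf{The discontinuity argument.} For the ``moreover'' clause (and hence (i)$\Rightarrow$(ii)) you need to show: if $B^\sim\subseteq (tP)^\sim$ with $B$ non-scalar, then $B$ or $B^\perp$ equals $sP$ for some $s\ge t$. You write that this ``pins down'' from an explicit description of nested strength-sums, but you never say how to rule out $B=\mathrm{Diag}(\mu_1,\mu_2)$ with both $\mu_i\in(0,1)$. The paper's device is clean: from the bullet points in the proof of Lemma~\ref{lem:BlockDiagSca}, $T_{tP}$ has a \emph{jump discontinuity} (limit $1-t$, value $1$) at the endpoint corresponding to $P$, while $T_B$ is continuous there whenever $\mu_1,\mu_2\in(0,1)$. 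Since $T_B\le T_{tP}$ pointwise and $T_B$ equals $1$ at the endpoint, $T_B$ must also jump there, forcing one of $\mu_i\in\{0,1\}$. This is the step your sketch leaves unjustified.

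\textbf{The choice of $B_1,B_2$ in (ii)$\Rightarrow$(i).} Your ``cleaner route'' using the two spectral projections is exactly what the paper does, but the values of $s$ matter. Writing $A=\lambda_1 P+\lambda_2 P^\perp$ with $1>\lambda_1>\lambda_2>0$, the paper takes $B_1=\lambda_1 P$ and $B_2=(1-\lambda_2)P^\perp$; the point is that $A$ lies in the parallelogram $\mathrm{conv}\{0,I,\lambda_1 P,(\lambda_1 P)^\perp\}$ (and similarly for $B_2$), so Lemma~\ref{lem:CoexSubset}(c) gives $B_j^\sim\subseteq A^\sim$. Incomparability of $B_1^\sim,B_2^\sim$ then follows from the ``moreover'' clause already proved. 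You should also note that the case $A\in\Proj_1(\C^2)$, where (i) fails, is disposed of by Lemma~\ref{lem:2dprojchar}: no $B$ with $B^\sim\subsetneq A^\sim$ exists, so the existential part of (ii) fails.

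\textbf{A wrong step.} Your claim that nesting of $B_1^\sim\cap\FiniteRank_1$ and $B_2^\sim\cap\FiniteRank_1$ upgrades to nesting of $B_1^\sim,B_2^\sim$ ``by Corollary~\ref{cor:CoexSet2Dim}'' is not valid: that corollary characterises \emph{equality}, not inclusion. Fortunately you don't need it --- once you know $B_i$ or $B_i^\perp=s_iP$, the full inclusion $(s_2P)^\sim\subseteq(s_1P)^\sim$ for $s_1\le s_2$ comes directly from Lemma~\ref{lem:CoexSubset}(c), since $s_1P\in\mathrm{conv}\{0,s_2P\}$.
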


\begin{proof}
	First, notice that by Theorem \ref{thm:CoexSet} and Lemma \ref{lem:CoexSubset} (c) we have
	$$
	(sP)^\sim \subseteq (tP)^\sim \quad\iff\quad t\leq s \qquad (P\in\Proj_1(\C^2),\; t, s \in (0,1]).
	$$ 

	\emph{(i)$\Longrightarrow$(ii):}
	If we have $B^\sim \subseteq (tP)^\sim$ with some rank-one projection $P$, $t \in (0,1]$ and qubit effect $B$, then by Lemma \ref{lem:CoexSubset} (b) we obtain $P\in B^c$ and $B\notin \Sca(\C^2)$.
	Furthermore, since $B^\sim\cap\FiniteRank_1(\C^2) \subseteq (tP)^\sim\cap\FiniteRank_1(\C^2)$, by Corollary \ref{cor:RankOneStrengthFunct} we obtain
	$$
	T_B(\alpha) \leq T_{tP}(\alpha) \qquad (0\leq \alpha \leq \tfrac{\pi}{2}),
	$$ 
	where we use the notation from the proof of Lemma \ref{lem:BlockDiagSca}.
	Thus, the discontinuity of $T_{tP}(\alpha)$ at either $\alpha = 0$, or $\alpha = \tfrac{\pi}{2}$, implies the discontinuity of $T_B(\alpha)$ at the same $\alpha$. 
	Whence we conclude either $B=sP$, or $B=I-sP$ with some $t\leq s\leq 1$.

	\emph{(ii)$\Longrightarrow$(i):}
	By Lemma \ref{lem:2dprojchar}, (ii) cannot hold for elements of $\Proj_1(\C^2)$, so we only have to check that if $A, A^\perp \notin \FiniteRank_1(\C^2) \cup \Sca(\C^2)$, then (ii) fails.
	Suppose that the spectral decomposition of $A$ is $\lambda_1 P + \lambda_2 P^\perp$ where $1 > \lambda_1>\lambda_2 > 0$.
	Then by Lemma \ref{lem:CoexSubset} (c) we find that $\left(\lambda_1 P\right)^\sim \subseteq A^\sim$ and $\left((1-\lambda_2) P^\perp\right)^\sim \subseteq A^\sim$ (see Figure \ref{fig:parallelogram}), but by the previous part neither $(\lambda_1 P)^\sim \subseteq \left((1-\lambda_2) P^\perp\right)^\sim$, nor $\left((1-\lambda_2) P^\perp\right)^\sim \subseteq (\lambda_1 P)^\sim$ holds.
\end{proof}

	\begin{figure}[h!]
	\begin{center}
		\begin{tikzpicture}[scale=2]
		%\draw[fill,lightgray] (0,0)--(-0.4,0.8)--(0,2)--(0.4,1.2)--(0,0);
		\draw[fill] (0,0) circle (0.02);
		\node[below] at (0,0) {$0$};
		\draw[fill] (-1,1) circle (0.02);
		\node[left] at (-1,1) {$P$};
		\draw[fill] (1,1) circle (0.02);
		\node[right] at (1,1) {$P^\perp$};
		\draw[fill] (0,2) circle (0.02);
		\node[above] at (0,2) {$I$};
		\draw (0,0)--(1,1)--(0,2)--(-1,1)--(0,0);
		\draw[fill] (-0.4,0.8) circle (0.02);
		\node[left] at (-0.4,0.85) {$A$};
		\draw[fill] (0.4,1.2) circle (0.02);
		\node[right] at (0.4,1.2) {$A^\perp$};
		\draw[dashed] (-0.4,0.8)--(-0.6,0.6);
		\draw[fill] (-0.6,0.6) circle (0.02);
		\node[left] at (-0.6,0.55) {$\lambda_1 P$};
		\draw[dashed] (0.4,1.2)--(0.8,0.8);
		\draw[fill] (0.8,0.8) circle (0.02);
		\node[below] at (1.2,0.8) {$(1-\lambda_2) P^\perp$};
		\end{tikzpicture}
		\caption{The figure shows all effects commuting with $A \in \Effect(\C^2)\setminus \Sca(\C^2)$, whose spectral decomposition is $A = \lambda_1 P + \lambda_2 P^\perp$ with $1>\lambda_1>\lambda_2>0$.}
		\label{fig:parallelogram}
	\end{center}
	\end{figure}
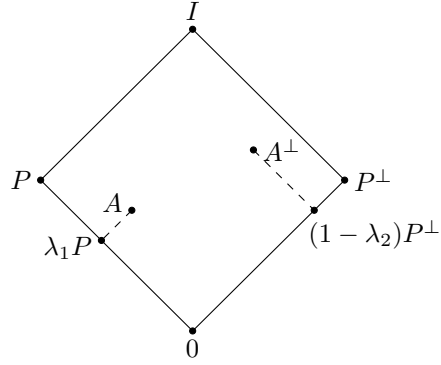

For a visualisation of $(tP)^\sim\cap\FiniteRank_1(\C^2)$ see Section \ref{sec:Visual}.
Before we proceed with the proof of Theorem \ref{thm:main} for qubit effects, we need a few more lemmas about rank-one projections acting on $\C^2$.

\begin{lemma}\label{lem:P0}
	For all $P, Q\in \Proj_1(\C^2)$ we have
	\begin{equation}\label{eq:P0}
		\|P-Q\|^2 = -\det(P-Q) = 1-\tr PQ = 1 - \|P^\perp-Q\|^2.
	\end{equation}
\end{lemma}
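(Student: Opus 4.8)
The plan is to verify the chain of equalities in \eqref{eq:P0} by a direct computation, exploiting the fact that for rank-one projections on $\C^2$ the operator $P-Q$ is a self-adjoint traceless $2\times 2$ matrix, which makes all the quantities in sight elementary functions of the two eigenvalues of $P-Q$. First I would observe that $\tr(P-Q)=\tr P-\tr Q=1-1=0$, so if $\pm\tau$ denote the eigenvalues of the self-adjoint operator $P-Q$, then $\det(P-Q)=-\tau^2$ and $\|P-Q\|=\tau$ (here $\tau\ge 0$). This already gives $\|P-Q\|^2=-\det(P-Q)$, the first equality.

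For the middle equality I would compute $\det(P-Q)$ directly. Expanding, $\det(P-Q)=\det P-\det Q+(\text{cross terms})$; since $P,Q$ are rank-one projections, $\det P=\det Q=0$, and a short calculation with the cofactor expansion (or the identity $\det(X-Y)=\det X-\tr(\mathrm{adj}(X)Y)+\det Y$ for $2\times 2$ matrices, where $\mathrm{adj}(X)=(\tr X)I-X$ when $\tr X$ is involved) yields $\det(P-Q)=-\tr(PQ)+\text{something that vanishes}$. More concretely, writing $P=P_x$, $Q=P_y$ with unit vectors $x,y$, one has $\tr PQ=|\langle x,y\rangle|^2$, and a direct evaluation of the $2\times 2$ determinant of $P_x-P_y$ in an orthonormal basis containing $x$ gives $-\bigl(1-|\langle x,y\rangle|^2\bigr)$. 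Hence $-\det(P-Q)=1-\tr PQ$, the second equality.

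The last equality, $1-\tr PQ=1-\|P^\perp-Q\|^2$, reduces to $\|P^\perp-Q\|^2=\tr PQ$. Since $P^\perp=I-P$ is again a rank-one projection on $\C^2$ (as $\dim\C^2=2$), the already-established relation applies to the pair $P^\perp,Q$, giving $\|P^\perp-Q\|^2=1-\tr(P^\perp Q)=1-\tr\bigl((I-P)Q\bigr)=1-\tr Q+\tr PQ=1-1+\tr PQ=\tr PQ$, as desired. I would then assemble the four expressions into the single displayed chain.

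I do not expect any serious obstacle here; the only mildly delicate point is making sure the eigenvalue/norm bookkeeping for the traceless self-adjoint matrix $P-Q$ is done cleanly (so that $\|P-Q\|$ is genuinely the larger eigenvalue in absolute value, which for a traceless $2\times2$ matrix is automatic since the eigenvalues are $\pm\tau$), and being careful that $P^\perp$ really is rank one — this is exactly where the hypothesis $\dim H=2$ enters and why the lemma is stated on $\C^2$. Everything else is routine linear algebra in two dimensions.
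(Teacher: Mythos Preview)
Your proposal is correct and follows essentially the same approach as the paper: both use the tracelessness of $P-Q$ to obtain the first equality, then compute $\det(P-Q)$ in a basis adapted to $P$ (the paper via an explicit angular parametrisation, you via the inner product $|\langle x,y\rangle|^2=\tr PQ$), and finally derive the last equality from $\tr P^\perp Q = 1-\tr PQ$. The only cosmetic difference is that the paper writes out concrete matrix entries with $\vartheta,\mu$, whereas you keep the computation slightly more coordinate-free.
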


\begin{proof}
	Since $\tr (P-Q) = 0$, the eigenvalues of the self-adjoint operator $P-Q$ are $\lambda$ and $-\lambda$ with some $\lambda \geq 0$.
	Hence we have $\|P-Q\|^2 = -\det(P-Q)$.
	Applying a unitary similarity if necessary, we may assume without loss of generality that $(1,0) \in \Image P$.
	Obviously, there exist $0\leq \vartheta\leq \tfrac{\pi}{2}$ and $0\leq \mu \leq 2\pi$ such that $(\cos\vartheta, e^{i\mu}\sin\vartheta) \in \Image Q$.
	Thus the matrix forms of $P$ and $Q$ in the standard basis are
	\begin{equation}\label{eq:mxformPQ1}
		P = P_{(1,0)}
		= \left[
		\begin{matrix}
			1 \\ 
			0
		\end{matrix}
		\right] \cdot
		\left[
		\begin{matrix}
			1 \\
			0
		\end{matrix}
		\right]^*
		= \left[
		\begin{matrix}
			1 & 0 \\
			0 & 0
		\end{matrix}
		\right]
	\end{equation}
	and
	\begin{equation}\label{eq:mxformPQ2}
		Q = P_{(\cos\vartheta, e^{i\mu}\sin\vartheta)}
		= \left[
		\begin{matrix}
			\cos\vartheta \\
			e^{i\mu}\sin\vartheta
		\end{matrix}
		\right] \cdot
		\left[
		\begin{matrix}
			\cos\vartheta \\
			e^{i\mu}\sin\vartheta
		\end{matrix}
		\right]^*
		=
		\left[
		\begin{matrix}
			\cos^2\vartheta & e^{-i\mu}\cos\vartheta\sin\vartheta \\
			e^{i\mu}\cos\vartheta\sin\vartheta & \sin^2\vartheta
		\end{matrix}
		\right],
	\end{equation}
	where we used the notation of the Busch--Gudder theorem.
	Now, an easy calculation gives us $\det(P-Q) = -\sin^2\vartheta$ and $\tr PQ = \cos^2\vartheta$. 
	Hence the second equation in \eqref{eq:P0} is proved, and the third one follows from $\tr P^\perp Q = 1 - \tr PQ$.
\end{proof}

For $P\in\Proj_1(\C^2)$ and $s\in [0,1]$, let us use the following notation:
$$
	\calM_{P,s} := \left\{ Q\in\Proj_1(\C^2) \colon \|P-Q\| = s \right\}.
$$
Next, we examine this set.

\begin{lemma}\label{lem:P1}
	For all $P\in \Proj_1(\C^2)$ the following statements are equivalent:
	\begin{itemize}
		\item[\textup{(i)}] $s = \sin\tfrac{\pi}{4}$,
		\item[\textup{(ii)}] there exists an $R\in \calM_{P,s}$ such that $R^\perp \in \calM_{P,s}$,
		\item[\textup{(iii)}] for all $R\in \calM_{P,s}$ we have also $R^\perp \in \calM_{P,s}$.
	\end{itemize}
\end{lemma}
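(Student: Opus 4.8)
The plan is to reduce everything to the explicit parametrisation of rank-one projections on $\C^2$ that was already set up in Lemma \ref{lem:P0}. Recall from there that for $P, Q\in\Proj_1(\C^2)$ we have $\|P-Q\|^2 = 1-\tr PQ$, and that after a unitary similarity we may assume $P = P_{(1,0)}$, while an arbitrary $Q$ has the form $Q = P_{(\cos\vartheta, e^{i\mu}\sin\vartheta)}$ for some $\vartheta\in[0,\tfrac{\pi}{2}]$, $\mu\in[0,2\pi]$, with $\tr PQ = \cos^2\vartheta$. Hence $\|P-Q\| = \sin\vartheta$, and so $Q\in\calM_{P,s}$ if and only if $\sin\vartheta = s$, i.e.\ $\calM_{P,s}$ is exactly the "latitude circle'' $\{P_{(\cos\vartheta, e^{i\mu}\sin\vartheta)}\colon \mu\in[0,2\pi]\}$ for the unique $\vartheta\in[0,\tfrac{\pi}{2}]$ with $\sin\vartheta = s$ (and for $s\in\{0,1\}$ it is a single point, $P$ or $P^\perp$).

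The cyclic implication to prove is (iii)$\Longrightarrow$(ii)$\Longrightarrow$(i)$\Longrightarrow$(iii). The first implication is immediate once one notes $\calM_{P,s}\neq\emptyset$ for every $s\in[0,1]$ (pick any $\vartheta$ with $\sin\vartheta=s$), so (iii) applied to any such $R$ yields (ii). For (ii)$\Longrightarrow$(i): suppose $R\in\calM_{P,s}$ and $R^\perp\in\calM_{P,s}$. Then $\|P-R\| = \|P-R^\perp\| = s$, and by the last equality in \eqref{eq:P0} we have $\|P-R^\perp\|^2 = 1-\|P-R\|^2$; combining gives $s^2 = 1-s^2$, so $s^2 = \tfrac12$, i.e.\ $s = \sin\tfrac{\pi}{4}$. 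For (i)$\Longrightarrow$(iii): if $s = \sin\tfrac\pi4$ and $R\in\calM_{P,s}$, then again $\|P-R^\perp\|^2 = 1-\|P-R\|^2 = 1-s^2 = s^2$, so $R^\perp\in\calM_{P,s}$.

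In fact the whole lemma collapses to the single algebraic identity $\|P-R^\perp\|^2 = 1-\|P-R\|^2$ from Lemma \ref{lem:P0}, so there is essentially no obstacle here; the only point requiring a moment's care is the degenerate cases $s\in\{0,1\}$, where $\calM_{P,0} = \{P\}$ and $\calM_{P,1} = \{P^\perp\}$, and one checks directly that (ii) and (iii) both fail there (e.g.\ $P^\perp\notin\calM_{P,0}$ since $\|P-P^\perp\| = 1\neq 0$), consistently with (i) failing. I would present it exactly in the cyclic order above, invoking Lemma \ref{lem:P0} for the key identity and spending one sentence on the trivial cases.
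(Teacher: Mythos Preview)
Your proof is correct and in fact cleaner than the paper's. You exploit directly the identity $\|P-R\|^2 + \|P-R^\perp\|^2 = 1$ coming from \eqref{eq:P0} (via $P^\perp - R = -(P - R^\perp)$), which reduces the whole lemma to the equation $s^2 = 1-s^2$. The paper instead parametrises the latitude circle $\calM_{P,\sin\vartheta}$ by the phase $\mu$, computes the distance between two of its elements as $\tfrac12|e^{i\mu_1}-e^{i\mu_2}|\sin(2\vartheta)$, and then argues that this can reach the value $1$ (i.e.\ the two elements can be ortho-complementary) for every $\mu_1$ if and only if $\vartheta=\tfrac{\pi}{4}$. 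Both arguments are elementary, but yours isolates the one relevant identity and avoids the explicit matrix computation; the paper's route has the minor side benefit of giving the exact formula for distances within $\calM_{P,s}$, though that formula is not used elsewhere.
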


\begin{proof}
	One could use the Bloch representation (see Section \ref{sec:Visual}), however, let us give here a purely linear algebraic proof.
	Note that for any $R_1, R_2 \in \Proj_1(\C^2)$ we have $\|R_1-R_2\| = 1$ if and only if $R_2 = R_1^\perp$.
	Without loss of generality we may assume that $P$ has the matrix form of \eqref{eq:mxformPQ1}.
	Then for any $0\leq \vartheta \leq \tfrac{\pi}{2}$ and $R_1, R_2 \in \calM_{P,\sin\vartheta}$ we have
	$$
		R_1
		= \left[
		\begin{matrix}
			\cos^2\vartheta & e^{-i\mu_1}\cos\vartheta\sin\vartheta \\
			e^{i\mu_1}\cos\vartheta\sin\vartheta & \sin^2\vartheta
		\end{matrix}
		\right]
		\quad\text{and}\quad
		R_2
		= \left[
		\begin{matrix}
			\cos^2\vartheta & e^{-i\mu_2}\cos\vartheta\sin\vartheta \\
			e^{i\mu_2}\cos\vartheta\sin\vartheta & \sin^2\vartheta
		\end{matrix}
		\right]
	$$
	with some $\mu_1,\mu_2\in \R$.
	Hence, we get
	\begin{align*}
		\|R_1-R_2\| & = \sqrt{1-\tr R_1R_2} = \sqrt{\sin^2\vartheta \cos^2\vartheta (2-e^{i(\mu_1-\mu_2)}-e^{i(\mu_2-\mu_1)})} \\
		& = |e^{i\mu_1}-e^{i\mu_2}| \cos\vartheta \sin\vartheta = \tfrac{1}{2} |e^{i\mu_1}-e^{i\mu_2}| \sin(2 \vartheta).
	\end{align*}
	Notice that the right-hand side is always less than or equal to 1.
	Moreover, for any $\mu_1\in\R$ there exist a $\mu_2\in\R$ such that $\|R_1-R_2\|=1$ if and only if $\vartheta = \tfrac{\pi}{4}$.
	This completes the proof.
\end{proof}

\begin{lemma}\label{lem:P2}
	Let $P,Q\in \Proj_1(\C^2)$ and $s,t\in(0,1)$. 
	Then the following are equivalent:
	\begin{itemize}
		\item[\textup{(i)}] $tP\sim sQ$
		\item[\textup{(ii)}] either $Q=P$, or $Q=P^\perp$, or 
		$$
			s \leq \frac{1}{\tfrac{1}{1-t}\|P^\perp-Q\|^2+\|P-Q\|^2}.
		$$
	\end{itemize}
\end{lemma}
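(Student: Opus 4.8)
The plan is to reduce everything to a computation of strength functions. Applying Corollary \ref{cor:RankOneStrengthFunct} with the effect $A := tP$ and the rank-one projection $Q$ (and scalar $s$), we get that $tP \sim sQ$ holds if and only if
\[
	s \leq \Lambda(tP, Q) + \Lambda\big((tP)^\perp, Q\big) = \Lambda(tP, Q) + \Lambda(I - tP, Q),
\]
so it suffices to evaluate the right-hand side with the Busch--Gudder theorem. Write $P = P_x$ and $Q = P_y$ for unit vectors $x, y \in \C^2$ and work in the orthogonal decomposition $\C^2 = \C x \oplus (\C x)^\perp$.

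First I would dispose of the two degenerate alternatives. If $Q = P$ then $\Lambda(tP, P) = t$ and $\Lambda(I - tP, P) = 1 - t$, whereas if $Q = P^\perp$ then $\Lambda(tP, P^\perp) = 0$ (as $y \perp x$) and $\Lambda(I - tP, P^\perp) = 1$ (as $I - tP$ acts as the identity on $\C y$); in both cases the bound above reads $s \leq 1$, so $tP \sim sQ$ holds unconditionally, matching the first two options in (ii).

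In the remaining \emph{generic} case $Q \notin \{P, P^\perp\}$, i.e.\ $y \notin \C x$ and $y \notin (\C x)^\perp$. Since $t > 0$ we have $\Image\big((tP)^{1/2}\big) = \C x \not\ni y$, so Busch--Gudder gives $\Lambda(tP, Q) = 0$. On the other hand $I - tP = (1-t)P + P^\perp$ is invertible (here $t < 1$ is used), with $(I - tP)^{-1/2} = (1-t)^{-1/2}P + P^\perp$, hence
\[
	\big\| (I - tP)^{-1/2} y \big\|^2 = \tfrac{1}{1-t}\|Py\|^2 + \|P^\perp y\|^2 = \tfrac{1}{1-t}\tr(PQ) + \tr(P^\perp Q),
\]
which by Lemma \ref{lem:P0} equals $\tfrac{1}{1-t}\|P^\perp - Q\|^2 + \|P - Q\|^2$. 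Feeding this back into Busch--Gudder and then into the displayed equivalence yields exactly the third alternative of (ii).

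The argument is essentially mechanical once the reduction via Corollary \ref{cor:RankOneStrengthFunct} is in place; the only point requiring a little care — and the closest thing to an obstacle — is verifying that the Busch--Gudder formula is applied on the correct ranges, namely that $y$ lies in $\Image((I-tP)^{1/2}) = \C^2$ (true by invertibility of $I - tP$) but not in $\Image((tP)^{1/2}) = \C x$ (true since $Q \neq P$), and keeping track of the two boundary cases where the clean formula degenerates.
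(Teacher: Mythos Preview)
Your argument is correct and takes a genuinely different route from the paper. The paper invokes an external fact (\cite[Lemma 2]{Mol0}) that two rank-one effects with different images are coexistent if and only if their sum is an effect, and then reduces the question to the nonnegativity of $\det(I - tP - sQ)$, computed in explicit $2\times 2$ matrix coordinates. You instead stay entirely within the machinery already set up in the paper: Corollary~\ref{cor:RankOneStrengthFunct} converts the coexistence $tP \sim sQ$ into the strength-function inequality $s \le \Lambda(tP,Q) + \Lambda(I-tP,Q)$, and then the Busch--Gudder theorem together with Lemma~\ref{lem:P0} evaluates both terms directly. Your approach is arguably more self-contained (no appeal to \cite{Mol0}) and makes transparent why the expression $\tfrac{1}{1-t}\|P^\perp - Q\|^2 + \|P-Q\|^2$ arises, namely as $\|(I-tP)^{-1/2}y\|^2$; the paper's determinant computation is shorter but less conceptual and relies on the extra coexistence criterion for rank-one effects.
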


\begin{proof}
	The case when $Q\in\{P,P^\perp\}$ is trivial, so from now on we assume otherwise.
	Recall that two rank-one effects with different images are coexistent if and only if their sum is an effect, see \cite[Lemma 2]{Mol0}.
	Therefore, (i) is equivalent to $I-tP-sQ \geq 0$.
	Since $\tr(I-tP-sQ) = 2-t-s > 0$, the latter is further equivalent to $\det(I-tP-sQ) \geq 0$.
	Without loss of generality we may assume that $P$ and $Q$ have the matrix forms written in \eqref{eq:mxformPQ1} and \eqref{eq:mxformPQ2} with $0<\vartheta<\tfrac{\pi}{2}$. 
	Then a calculation gives
	$$
		\det(I-tP-sQ) = s(t-1)\sin^2\vartheta - s\cos^2\vartheta +1-t = 1-t-s+ts\|P-Q\|^2.
	$$
	From the latter we get that $\det(I-tP-sQ) \geq 0$ holds if and only if
	$$
		s\leq \frac{1-t}{1-t\|P-Q\|^2},
	$$
	which, by \eqref{eq:P0} is equivalent to (ii).
\end{proof}

Note that we have
$$
	0 < \frac{1}{\tfrac{1}{1-t}\|P^\perp-Q\|^2+\|P-Q\|^2} < 1 \qquad (t\in (0,1), P,Q\in \Proj_1(\C^2), Q\notin \{P,P^\perp\}).
$$
We need one more lemma.

\begin{lemma}\label{lem:P3}
	Let $P,Q\in \Proj_1(\C^2)$. 
	Then there exists a projection $R\in \Proj_1(\C^2)$ such that
	$$
		\|P-R\| = \|Q-R\| = \sin\tfrac{\pi}{4}.
	$$
\end{lemma}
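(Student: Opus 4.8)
The plan is to reduce everything to a calculation in terms of the trace (equivalently the norm distance) using Lemma \ref{lem:P0}, and then to exhibit the required projection $R$ explicitly in coordinates. First I would dispose of the trivial case: if $\|P-Q\|=1$, i.e.\ $Q=P^\perp$, then we need a projection $R$ equidistant from both $P$ and $P^\perp$ at distance $\sin\tfrac{\pi}{4}$; by Lemma \ref{lem:P0} the condition $\|P-R\|=\sin\tfrac{\pi}{4}$ already forces $\|P^\perp-R\|^2 = 1 - \sin^2\tfrac{\pi}{4} = \sin^2\tfrac{\pi}{4}$, so \emph{any} $R$ with $\tr PR = \tfrac12$ works. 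So the real content is the case $Q\notin\{P,P^\perp\}$.

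Next I would normalise the coordinates. Applying a unitary similarity, put $P = P_{(1,0)}$ and $Q = P_{(\cos\vartheta,\, e^{i\mu}\sin\vartheta)}$ as in \eqref{eq:mxformPQ1}--\eqref{eq:mxformPQ2}, with $0 < \vartheta < \tfrac{\pi}{2}$; and after a further diagonal unitary we may take $\mu = 0$, so $Q = P_{(\cos\vartheta,\sin\vartheta)}$ sits in the "real" plane. Write the candidate as $R = P_{(\cos\psi,\, e^{i\varphi}\sin\psi)}$. Using $\|P-R\|^2 = 1 - \tr PR = \sin^2\psi$ (Lemma \ref{lem:P0}), the first equation $\|P-R\| = \sin\tfrac{\pi}{4}$ just says $\sin^2\psi = \tfrac12$, i.e.\ $\psi = \tfrac{\pi}{4}$, so $R = P_{\frac{1}{\sqrt2}(1,\, e^{i\varphi})}$ and the only free parameter left is $\varphi$. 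Then a direct computation of $\tr QR$ with $Q,R$ in these forms gives
\begin{equation*}
	\tr QR = \tfrac12\bigl(\cos\vartheta + e^{i\varphi}\sin\vartheta\bigr)\bigl(\cos\vartheta + e^{-i\varphi}\sin\vartheta\bigr) = \tfrac12\bigl(1 + 2\cos\vartheta\sin\vartheta\cos\varphi\bigr) = \tfrac12\bigl(1 + \sin(2\vartheta)\cos\varphi\bigr),
\end{equation*}
so $\|Q-R\|^2 = 1 - \tr QR = \tfrac12 - \tfrac12\sin(2\vartheta)\cos\varphi$. We want this to equal $\tfrac12$, which holds precisely when $\cos\varphi = 0$, e.g.\ $\varphi = \tfrac{\pi}{2}$. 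Hence $R = P_{\frac{1}{\sqrt2}(1,\,i)}$ (in the chosen coordinates) does the job.

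There is no real obstacle here; the only thing to be a little careful about is the normalisation step, namely checking that after the two unitary conjugations one genuinely has $Q$ in the stated real form with $\vartheta\in(0,\tfrac\pi2)$, and that a unitary conjugation preserves all three quantities in the statement (it does, since $\|\cdot\|$ and $\Proj_1(\C^2)$ are unitarily invariant). One could alternatively phrase the whole argument geometrically via the Bloch sphere: $P,Q$ correspond to unit vectors $\mathbf p,\mathbf q\in S^2$, the condition $\|P-R\|=\sin\tfrac\pi4$ means the Bloch vector $\mathbf r$ of $R$ makes a fixed angle (a right angle, since $\tr PR = \tfrac12 \iff \mathbf p\cdot\mathbf r = 0$) with $\mathbf p$, so $\mathbf r$ ranges over a great circle; the same for $\mathbf q$; and two great circles on $S^2$ always intersect. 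I would likely include the coordinate computation as the primary argument since the excerpt's Lemma \ref{lem:P2} already works in exactly this linear-algebraic language, and merely remark on the Bloch-sphere picture.
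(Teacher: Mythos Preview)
Your proof is correct and follows essentially the same route as the paper: normalise $P=P_{(1,0)}$, force $\psi=\tfrac{\pi}{4}$ so that $R=P_{\frac{1}{\sqrt2}(1,e^{i\varphi})}$, and then solve $\tr QR=\tfrac12$ for the remaining phase (the paper keeps $\mu$ and finds $z=ie^{i\mu}$, which is exactly your $\varphi=\tfrac{\pi}{2}$ after your extra normalisation $\mu=0$). The paper likewise notes the Bloch-sphere alternative but opts for the coordinate computation.
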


\begin{proof}
	Again, one could use the Bloch representation, however, let us give here a purely linear algebraic proof.
	We may assume without loss of generality that $P$ and $Q$ are of the form \eqref{eq:mxformPQ1} and \eqref{eq:mxformPQ2}.
	Then for any $z\in\C$, $|z|=1$ the rank-one projection
	$$
		R = 
		\frac{1}{\sqrt{2}}\left[
		\begin{matrix}
			1 \\
			z
		\end{matrix}
		\right] \cdot
		\left( \frac{1}{\sqrt{2}}\left[
		\begin{matrix}
			1\\
			z
		\end{matrix}
		\right]\right)^*
		= \frac{1}{2}\left[
		\begin{matrix}
			1 & \overline{z} \\
			z & 1
		\end{matrix}
		\right]
	$$
	satisfies $\|P-R\| = \sin\tfrac{\pi}{4}$.
	In order to complete the proof we only have to find a $z$ with $|z|=1$ such that $\tr RQ = \tfrac{1}{2}$, which is an easy calculation.
	Namely, we find that $z=ie^{i\mu}$ is a suitable choice.
\end{proof}

Now, we are in the position to prove our second main result in the low-dimensional case.

\begin{proof}[Proof of Theorem \ref{thm:main} in two dimensions]
	The proof is divided into the following three steps: 
	\begin{itemize}
		\item[1] we show some basic properties of $\phi$, in particular, that it preserves commutativity in both directions,
		\item[2] we show that $\phi$ maps pairs of rank-one projections with distance $\sin\tfrac{\pi}{4}$ into pairs of rank-one projections with the same distance,
		\item[3] we finish the proof by examining how $\phi$ acts on rank-one projections and rank-one effects.
	\end{itemize} 
	
	\smallskip

	\emph{STEP 1:} First of all, the properties of $\phi$ imply 
	\begin{equation*}
		\phi(A)^\sim = \phi(A^\sim) \quad (A\in \Effect(\C^2)),
	\end{equation*}
	and
	\begin{equation*}
		B^\sim \subseteq A^\sim \;\;\iff\;\; \phi(B)^\sim \subseteq \phi(A)^\sim \quad (A,B\in \Effect(\C^2)).
	\end{equation*}
	Hence, it is straightforward from Lemma \ref{lem:properties} that there exists a bijection $g\colon [0,1] \to [0,1]$ such that
	\begin{equation}\label{eq:Scafixed}
		\phi(t I) = g(t)I \quad (t\in[0,1]).
	\end{equation} 
	Also, by Lemma \ref{lem:2dprojchar} we easily infer 
	\begin{equation*}
		\phi(\Proj_1(\C^2)) = \Proj_1(\C^2),
	\end{equation*}
	thus, in particular, we get
	\begin{equation*}
		\phi(P^c) = \phi(P^\sim) = \phi(P)^\sim = \phi(P)^c \quad (P\in \Proj_1(\C^2)).
	\end{equation*}
	By Theorem \ref{thm:CoexSet} we also obtain
	\begin{equation*}
		\phi(A^\perp) = \phi(A)^\perp \quad (A\in \Effect(\C^2)\setminus \Sca(\C^2)).
	\end{equation*}
	Now, we observe that $\phi$ preserves commutativity in both directions. Indeed we have the following for every $A,B \in \Effect(\C^2)\setminus \Sca(\C^2)$:
	\begin{align*}
		AB = BA &\,\iff\, A^\sim\cap\Proj_1(\C^2) = B^\sim\cap\Proj_1(\C^2) = \{P,P^\perp\} \text{ for some } P\in\Proj_1(\C^2)  \\
		&\,\iff\, \phi(A)^\sim\cap\Proj_1(\C^2) = \phi(B)^\sim\cap\Proj_1(\C^2) = \{Q,Q^\perp\} \text{ for some } Q\in\Proj_1(\C^2) \\
		&\,\iff\, \phi(A)\phi(B) = \phi(B)\phi(A).
	\end{align*}
	Note that we easily get the same conclusion using \eqref{eq:Scafixed} if any of the two effects is a scalar effect.

	Next, notice that Lemma \ref{lem:F1order} implies 
	\begin{equation*}
		A \,\text{or}\, A^\perp \in \FiniteRank_1(\C^2)\setminus\Proj_1(\C^2)
		\;\;\iff\;\; \phi(A) \,\text{or}\, \phi(A)^\perp \in \FiniteRank_1(\C^2)\setminus\Proj_1(\C^2).
	\end{equation*}
	Therefore, by interchanging the $\phi$-images of $tP$ and $I-tP$ for some $0<t<1$ and $P\in\Proj_1(\C^2)$, we may assume without loss of generality that
	\begin{equation*}
		\phi\left(\FiniteRank_1(\C^2)\setminus\Proj_1(\C^2)\right) = \FiniteRank_1(\C^2)\setminus\Proj_1(\C^2).
	\end{equation*}
	Hence we obtain the following for all rank-one projections $P$:
	\begin{equation*}
		\phi\left(\{tP, tP^\perp\colon 0<t\leq 1\}\right) = \phi\left(P^c\cap\FiniteRank_1(\C^2)\right) 
		= \phi(P)^c\cap \FiniteRank_1(\C^2) = \{t\phi(P), t\phi(P)^\perp\colon 0<t\leq 1\}.
	\end{equation*}
	Thus, again by interchanging the $\phi$-images of $P$ and $P^\perp$ for some $P\in\Proj_1(\C^2)$, and using Lemma \ref{lem:F1order}, we may assume without loss of generality that for every $P\in \Proj_1(\C^2)$ there exists a strictly increasing bijective map $f_P\colon (0,1]\to (0,1]$ such that
	\begin{equation}\label{eq:fPt}
		\phi(tP) = f_P(t) \phi(P) \quad (0<t\leq1, P\in\Proj_1(\C^2)).
	\end{equation}

	\smallskip

	\emph{STEP 2:}
	We define the following set for any qubit effect of the form $tP$, $0<t<1, P\in\Proj_1(\C^2)$:
	\begin{equation}\label{eq:EtP}
		\E_{tP} := \left\{ \frac{1}{ \tfrac{1}{1-t} \|P^\perp-Q\|^2 + \|P-Q\|^2 } Q \colon Q\in\Proj_1(\C^2)\setminus\{P, P^\perp\} \right\}.
	\end{equation}
	(For a visualisation of $\E_{tP}$ see Section \ref{sec:Visual}.)
	Using Lemma \ref{lem:P2} we see that
	\begin{equation*}
		\E_{tP} = \big((tP)^\sim \setminus \cup\{(sP)^\sim\colon t<s<1\}\big) \cap\FiniteRank_1(\C^2) \qquad (0<t<1, P\in\Proj_1(\C^2)).
	\end{equation*}
	By the properties of $\phi$ we obtain
	\begin{equation}\label{eq:phiellipsoidtP}
		\phi(\E_{tP}) = \E_{\phi(tP)} = \E_{f_P(t)\phi(P)} \qquad (0<t<1, P\in\Proj_1(\C^2)).
	\end{equation}
	
	Next, using the set introduced in \eqref{eq:EtP}, we prove the following property of $\phi$:
	\begin{equation}\label{eq:phiprojpi/4}
		\|P-Q\| = \sin \tfrac{\pi}{4} \;\iff\; \|\phi(P)-\phi(Q)\| = \sin \tfrac{\pi}{4} \qquad (P,Q\in\Proj_1(\C^2)).
	\end{equation}
	By a straightforward calculation we get that
	\begin{equation*}
		\E_{tP}\cap\E_{rP^\perp} = \left\{ \frac{1-t}{1-t \cdot s(t,r)^2}Q \colon Q\in\Proj_1(\C^2), \|P-Q\| = s(t,r) \right\} \qquad (t,r\in (0,1), P\in\Proj_1(\C^2))
	\end{equation*}
	where 
	\begin{equation*}
		s(t,r) := \sqrt{\frac{\tfrac{t}{1-t}}{\tfrac{t}{1-t}+\tfrac{r}{1-r}}}.
	\end{equation*}
	Note that $s(t,r) = \sin\tfrac{\pi}{4}$ holds if and only if $t= r$.
	By Lemma \ref{lem:P1}, this is further equivalent to the following:
	\begin{align*}
		\forall\; A_1 \in \E_{tP}\cap\E_{rP^\perp}, \; \exists\, A_2 \in \E_{tP}\cap\E_{rP^\perp}, A_1\neq A_2\colon (A_1)^\sim\cap\Proj_1(\C^2) = (A_2)^\sim\cap\Proj_1(\C^2).
	\end{align*}
	Notice that by \eqref{eq:phiellipsoidtP} this is equivalent to the following:
	\begin{align*}
		\forall\; B_1 \in \E_{f_P(t)\phi(P)}\cap\E_{f_{P^\perp}(r)\phi(P)^\perp}, \; \exists\, B_2 \in \E_{f_P(t)\phi(P)}\cap\E_{f_{P^\perp}(r)\phi(P)^\perp}, B_1\neq B_2\colon \\
		(B_1)^\sim\cap\Proj_1(\C^2) = (B_2)^\sim\cap\Proj_1(\C^2),
	\end{align*}
	which is further equivalent to $f_P(t) = f_{P^\perp}(r)$.
	
	Hence we can conclude a few important properties of $\phi$. 
	First, we have
	\begin{equation*}
		f_P(t) = f_{P^\perp}(t) \quad (0< t \leq 1, P\in\Proj_1(\C^2)).
	\end{equation*}
	Second, since for every $0<t<1$ and $P\in\Proj_1(\C^2)$ we have
	\begin{align*}
		&\left\{ f_Q\left(\tfrac{1-t}{1-t/2}\right)\phi(Q) \colon Q\in\Proj_1(\C^2), \|P-Q\| = \sin\tfrac{\pi}{4} \right\} = \phi\left(\E_{tP}\cap\E_{tP^\perp}\right) \nonumber \\
		&= \E_{f_P(t)\phi(P)}\cap\E_{f_P(t)\phi(P)^\perp} = \left\{ \tfrac{1-f_P(t)}{1-f_P(t)/2}R \colon R\in\Proj_1(\C^2), \|\phi(P)-R\| = \sin\tfrac{\pi}{4} \right\},
	\end{align*}
	therefore using \eqref{eq:fPt} gives \eqref{eq:phiprojpi/4}.
	
	Furthermore, we also obtain
	\begin{equation}\label{eq:fQ}
		f_Q\left(\tfrac{1-t}{1-t/2}\right) = \tfrac{1-f_P(t)}{1-f_P(t)/2} \qquad \left(0< t < 1, P,Q\in\Proj_1(\C^2), \|P-Q\| = \sin\tfrac{\pi}{4}\right).
	\end{equation}
	By Lemma \ref{lem:P3}, for all $Q_1, Q_2\in\Proj_1(\C^2)$ there exists a rank-one projection $P$ such that
	\begin{equation*}
		\|Q_1-P\| = \|Q_2-P\| = \sin\tfrac{\pi}{4}.
	\end{equation*}
	Therefore, applying \eqref{eq:fQ} and noticing that $t\mapsto\tfrac{1-t}{1-t/2}$ is a strictly decreasing bijection of $(0,1)$ gives that 
	\begin{equation*}
		f_{Q_1}(t) = f_{Q_2}(t) \quad (t\in (0,1), Q_1, Q_2\in\Proj_1(\C^2)).
	\end{equation*}
	Thus we conclude that there exists a strictly increasing bijection $f\colon (0,1]\to (0,1]$ such that
	\begin{equation}\label{eq:fPeqf}
		\phi(tP) = f(t) \phi(P) \quad (0<t\leq1, P\in\Proj_1(\C^2)).
	\end{equation}
	We also observe that \eqref{eq:fQ} implies
	\begin{equation}\label{eq:frule}
		f\left( \tfrac{1-t}{1-t/2} \right) = \tfrac{1-f(t)}{1-f(t)/2},
	\end{equation}
	therefore we notice that 
	\begin{equation}\label{eq:ffixpoint}
		f\left( 2-\sqrt{2} \right) = 2-\sqrt{2},
	\end{equation}
	which is a consequence of the fact that the unique solution of the equation $t = \tfrac{1-t}{1-t/2}$, $0<t<1$, is $t = 2-\sqrt{2}$.

	\smallskip
	
	\emph{STEP 3:}
	Next, applying \cite[Theorem 2.3]{Ge} gives that there exists a unitary or antiunitary operator $U\colon \C^2\to\C^2$ such that we have
	\begin{equation*}
		U^*\phi(P)U \in \{P,P^\perp\} \quad (P\in \Proj_1(\C^2)).
	\end{equation*}
	Since either both $U^*\phi(\cdot)U$ and $\phi(\cdot)$ satisfy our assumptions simultaneously, or none of them does, therefore without loss of generality we may assume that we have
	\begin{equation*}
		\phi(P) \in \{P,P^\perp\} \quad (P\in \Proj_1(\C^2)).
	\end{equation*}
	We now claim that
	\begin{equation}\label{eq:PorPperp}
		\text{either}\; \phi(P) = P \;\; (P\in \Proj_1(\C^2)), \; \text{or} \; \phi(P) = P^\perp \;\; (P\in \Proj_1(\C^2))
	\end{equation}
	Let us assume otherwise, then there exist two rank-one projections $P$ and $Q$ such that $\|P-Q\| < \sin\tfrac{\pi}{4}$, $\phi(P) = P$ and $\phi(Q) = Q^\perp$.
	Note that $\|P-Q^\perp\| = \sqrt{1-\|P-Q\|^2} > \sin\tfrac{\pi}{4} > \|P-Q\|$. 
	By \eqref{eq:phiellipsoidtP} and \eqref{eq:ffixpoint} we have
	\begin{align}\label{eq:fid1}
		\left\{\tfrac{\sqrt{2}-1}{1-(2-\sqrt{2})\|P-R\|^2} R\colon R\in\Proj_1(\C^2)\setminus\{P,P^\perp\} \right\} = \E_{(2-\sqrt{2})P} = \phi\left(\E_{(2-\sqrt{2})P}\right) \nonumber \\
		= \left\{f\left(\tfrac{\sqrt{2}-1}{1-(2-\sqrt{2})\|P-R\|^2}\right) \phi(R)\colon R\in\Proj_1(\C^2)\setminus\{P,P^\perp\} \right\}.
	\end{align}
	Therefore putting first $R=Q$ and then $R=Q^\perp$ gives
	\begin{align*}
		\phi\left( \tfrac{\sqrt{2}-1}{1-(2-\sqrt{2})\|P-Q\|^2} Q \right) & = f\left(\tfrac{\sqrt{2}-1}{1-(2-\sqrt{2})\|P-Q\|^2}\right) \phi(Q) \\
		& = f\left(\tfrac{\sqrt{2}-1}{1-(2-\sqrt{2})\|P-Q\|^2}\right) Q^\perp = \tfrac{\sqrt{2}-1}{1-(2-\sqrt{2})\|P-Q^\perp\|^2} Q^\perp
	\end{align*}
	and
	\begin{align*}
		\phi\left( \tfrac{\sqrt{2}-1}{1-(2-\sqrt{2})\|P-Q^\perp\|^2} Q^\perp \right) & = f\left(\tfrac{\sqrt{2}-1}{1-(2-\sqrt{2})\|P-Q^\perp\|^2}\right) \phi(Q^\perp) \\
		& = f\left(\tfrac{\sqrt{2}-1}{1-(2-\sqrt{2})\|P-Q^\perp\|^2}\right) Q = \tfrac{\sqrt{2}-1}{1-(2-\sqrt{2})\|P-Q\|^2} Q.
	\end{align*}
	But this implies that $f$ interchanges two different numbers which contradicts to its strict increasingness -- proving our claim \eqref{eq:PorPperp}.

	Note that for every $0\leq \vartheta \leq \tfrac{\pi}{2}$ and $0\leq \mu < 2\pi$ we have
	\begin{equation*}
		(P_{(\cos\vartheta, e^{i\mu} \sin\vartheta)})^\perp
		= \left[ \begin{matrix}
			\sin^2\vartheta & -e^{-i\mu} \cos\vartheta \sin\vartheta \\
			-e^{i\mu} \cos\vartheta \sin\vartheta & \cos^2\vartheta \\
		\end{matrix}\right]
		= \left[ \begin{matrix}
			0 & 1 \\
			-1 & 0 \\
		\end{matrix}\right]
		(P_{(\cos\vartheta, e^{i\mu} \sin\vartheta)})^t 
		\left[ \begin{matrix}
			0 & 1 \\
			-1 & 0 \\
		\end{matrix}\right]^*
	\end{equation*}
	where $\cdot^t$ stands for the transposition, and we used the notation of the Busch--Gudder theorem.
	It is well-known, and can be verified by an easy computation, that we have $A^t = KAK^*$ for every qubit effect $A$, where $K$ is the coordinate-wise conjugation antiunitary operator: $K(z_1,z_2) = (\overline{z_1},\overline{z_2})$ $(z_1,z_2\in\C)$.
	Therefore from now on we may assume without loss of generality that we have
	\begin{equation}\label{eq:fixedP}
		\phi(P) = P \quad (P\in\Proj_1(\C^2)),
	\end{equation}
	i.e.~$\phi$ fixes all rank-one projections.
	
	Finally, observe that \eqref{eq:fid1} and \eqref{eq:fixedP} implies
	\begin{equation*}
		f\left(\tfrac{\sqrt{2}-1}{1-(2-\sqrt{2}) \tau }\right) = \tfrac{\sqrt{2}-1}{1-(2-\sqrt{2}) \tau} \quad (0<\tau<1),
	\end{equation*}
	thus we obtain $\phi(tP) = tP$ for all $P\in\Proj_1(\C^2)$ and $\sqrt{2}-1 < t < 1$. 
	But this further implies 
	\begin{align*}
		\left\{\tfrac{1-t}{1-t\|P-Q\|^2} Q\colon Q\in\Proj_1(\C^2)\setminus\{P,P^\perp\} \right\} = \E_{tP} = \phi\left(\E_{tP}\right)
		= \left\{ f\left(\tfrac{1-t}{1-t\|P-Q\|^2}\right) Q\colon Q\in\Proj_1(\C^2)\setminus\{P,P^\perp\} \right\}
	\end{align*}
	for all $\sqrt{2}-1 < t < 1$, from which we conclude  
	\begin{equation}\label{eq:F1fix}
		\phi(tP) = tP \quad (0<t<1),
	\end{equation}
	i.e.~$\phi$ fixes all rank-one effects.
	From here we only need to apply Corollary \ref{cor:CoexSet2Dim} and transform back to our original $\phi$ to complete the proof.
\end{proof}

%-------------------------------------------------------------------------------------------------------
%-------------------------------------------------------------------------------------------------------
%-------------------------------------------------------------------------------------------------------
%-------------------------------------------------------------------------------------------------------%-------------------------------------------------------------------------------------------------------

\section{Proof of Theorem \ref{thm:main} in the general case}

Here we prove the general case of our main theorem, utilising the above proved low-dimensional case.
We start with two lemmas.

\begin{lemma}\label{lem:ProjChar0}
	Let $P \in \Proj(H)\setminus \Sca(H)$ and $A \in \Effect(H) \setminus \{ P, P^\perp \}$.
	Then there exists a rank-one effect $R\in \FiniteRank_1(H)$ such that $R \sim A$ but $R \not\sim P$.
\end{lemma}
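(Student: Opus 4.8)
The plan is to reduce the lemma, via the Busch--Gudder theorem, to a short linear-algebra statement about the ranges of $A^{1/2}$ and $(I-A)^{1/2}$. First I would unpack what is being asked of $R$. Writing $R=tQ$ with $Q\in\Proj_1(H)$ and $0<t\le 1$, the requirement $R\not\sim P$ is equivalent, by Lemma~\ref{lem:properties}(b), to $Q$ not commuting with $P$; writing $Q=P_x$ with $\|x\|=1$, this just says that $x$ is not an eigenvector of $P$, i.e.\ $x\notin\Image P\cup\Ker P$. On the other hand, by Corollary~\ref{cor:RankOneStrengthFunct}, $R=tQ\sim A$ holds as soon as $0<t\le\Lambda(A,Q)+\Lambda(A^\perp,Q)$, and such a $t$ exists precisely when $\Lambda(A,Q)+\Lambda(A^\perp,Q)>0$. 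Thus the lemma is equivalent to the existence of a unit vector $x$ with $x\notin\Image P\cup\Ker P$ and $\Lambda(A,P_x)+\Lambda(A^\perp,P_x)>0$; one then takes $t=\min\{1,\Lambda(A,P_x)+\Lambda(A^\perp,P_x)\}$ and $R=tP_x$.

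Next I would translate the strength-function condition using the Busch--Gudder theorem: $\Lambda(A,P_x)+\Lambda(A^\perp,P_x)>0$ exactly when $x\in\Image(A^{1/2})\cup\Image((I-A)^{1/2})$. So it suffices to prove that this union is \emph{not} contained in $\Image P\cup\Ker P$. I would argue by contradiction, assuming the containment. Since $\Image(A^{1/2})$ is a linear subspace and a complex vector space is never the union of two proper subspaces, $\Image(A^{1/2})$ lies wholly in $\Image P$ or wholly in $\Ker P$, and the same for $\Image((I-A)^{1/2})$. Taking closures and using $\overline{\Image(A^{1/2})}=\overline{\Image A}=(\Ker A)^\perp$ (and the analogue for $I-A$), each of $\overline{\Image A}$ and $\overline{\Image(I-A)}$ is contained in $\Image P$ or in $\Ker P$.

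Finally I would use the identity $x=Ax+(I-A)x$, which forces $\Image A+\Image(I-A)=H$. Hence $\overline{\Image A}$ and $\overline{\Image(I-A)}$ cannot both lie in $\Image P$ (else $P=I$) nor both in $\Ker P$ (else $P=0$), both being excluded since $P\notin\Sca(H)$; so one lies in $\Image P$ and the other in $\Ker P$. In the case $\overline{\Image A}\subseteq\Image P$ and $\overline{\Image(I-A)}\subseteq\Ker P$ we get $Px=P(Ax)+P((I-A)x)=Ax$ for all $x$, i.e.\ $P=A$; the symmetric case gives $P=I-A=A^\perp$. Either way this contradicts $A\notin\{P,P^\perp\}$, completing the proof. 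I expect this argument to go through without real difficulty; the one point to get exactly right is the passage to the ranges $\Image(A^{1/2})$, $\Image((I-A)^{1/2})$ together with the observation that coexistence of $A$ with the rank-one effect $tQ$ restricts $A$ only ``along $\C\cdot x$'', after which the elementary non-union-of-two-subspaces fact handles every configuration uniformly --- in particular the borderline case where $A$ is itself a projection and these ranges are ``thin''.
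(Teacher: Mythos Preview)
Your argument is correct and in fact cleaner than the paper's. Both proofs start the same way---contrapositive, Busch--Gudder, and the observation that the issue is whether $\Image(A^{1/2})\cup\Image((I-A)^{1/2})$ can sit inside $\Image P\cup\Ker P$---but then diverge. The paper passes to the closed set $\{Q\in\Proj_1(H):\Image Q\subset(\Ker A)^\perp\cup(\Ker(I-A))^\perp\}$ and runs a \emph{topological} argument: it shows this set is connected when $A$ is not a projection, while $\supp(\Lambda(P,\cdot)+\Lambda(P^\perp,\cdot))$ has two components, forcing the connected set into one component and contradicting the presence of a full orthonormal family; the projection case is then handled separately by matching components. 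Your route is purely algebraic: the ``a vector space is not the union of two proper subspaces'' trick forces each of $\Image(A^{1/2})$, $\Image((I-A)^{1/2})$ wholly into $\Image P$ or $\Ker P$, and then the identity $x=Ax+(I-A)x$ together with $Px=P(Ax)+P((I-A)x)$ pins $A$ down as $P$ or $P^\perp$ in one stroke. This avoids any case split on whether $A$ is a projection and sidesteps the connectedness machinery entirely, at no real cost.
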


\begin{proof}
	Assume that $A \in \Effect(H)$ such that $A^\sim \cap \FiniteRank_1(H) \subseteq P^\sim = P^c$ holds. 
	We have to show that then either $A=P$, or $A=P^\perp$.
	Clearly, $A$ is not a scalar effect.
	By Corollary \ref{cor:RankOneStrengthFunct} we obtain that 
	\begin{equation*}
		\Lambda(A,Q) + \Lambda(A^\perp,Q) \leq \Lambda(P,Q) + \Lambda(P^\perp,Q) \qquad (Q\in\Proj_1(H)).
	\end{equation*}
	Notice that the set
	\begin{align*}
		\supp \left(\Lambda(P,\cdot) + \Lambda(P^\perp,\cdot)\right) := \left\{ Q\in\Proj_1(H) \colon \Lambda(P,Q) + \Lambda(P^\perp,Q) > 0 \right\}
	\end{align*}
	has two connected components (with respect to the operator norm topology), namely
	\begin{align}\label{eq:ConnComp}
		\left\{ Q\in\Proj_1(H) \colon \Image Q \subset \Image P \right\} \;\; \text{and} \;\; \left\{ Q\in\Proj_1(H) \colon \Image Q \subset \Ker P \right\}.
	\end{align}
	However, by the Busch--Gudder theorem we obtain that
	\begin{align*}
		\left\{ Q\in\Proj_1(H) \colon \Image Q \subset \Image A \cup \Image (I-A) \right\}
		& \subseteq \left\{ Q\in\Proj_1(H) \colon \Image Q \subset \Image A^{1/2} \cup \Image (I-A)^{1/2} \right\} \\
		& \subseteq \supp \left(\Lambda(A,\cdot) + \Lambda(A^\perp,\cdot)\right) \subseteq \supp \left(\Lambda(P,\cdot) + \Lambda(P^\perp,\cdot)\right).
	\end{align*}
	Since $\supp \left(\Lambda(P,\cdot) + \Lambda(P^\perp,\cdot)\right)$ is a closed set, we obtain
	\begin{align}\label{eq:ClosedRel}
		\left\{ Q\in\Proj_1(H) \colon \Image Q \subset (\Image A)^- \cup (\Image (I-A))^- \right\}
		& = \left\{ Q\in\Proj_1(H) \colon \Image Q \subset (\Ker A)^\perp \cup (\Ker (I-A))^\perp \right\} \nonumber \\
		& \subseteq \supp \left(\Lambda(P,\cdot) + \Lambda(P^\perp,\cdot)\right).
	\end{align}
	Notice that the left-hand side of \eqref{eq:ClosedRel} is connected if and only if $A$ is not a projection, in which case it must be a subset of one of the components of the right-hand side.
	However, this is impossible because the left-hand side contains a maximal set of pairwise orthogonal rank-one projections.
	Therefore $A\in\Proj(H)$, and in particular $\supp \left(\Lambda(A,\cdot) + \Lambda(A^\perp,\cdot)\right)$ has two connected components.
	From here using \eqref{eq:ConnComp} for both $A$ and $P$ we easily complete the proof.	
\end{proof}

We introduce a new relation on $\Effect(H)\setminus \Sca(H)$. 
For $A,B \in \Effect(H)\setminus \Sca(H)$ we write $A \prec B$ if and only if for every $C\in A^\sim \setminus \Sca(H)$ there exists a $D\in B^\sim \setminus \Sca(H)$ such that $C^\sim \subseteq D^\sim$.
Clearly, for every non-scalar effect $B$ we have $B \prec B$ and $B^\perp \prec B$. 
In particular $\prec$ is a reflexive relation, but it is not antisymmetric.
It is also straightforward from the definition that $\prec$ is a transitive relation, i.e. $A \prec B$ and $B \prec C$ imply $A \prec C$.

We proceed with characterising non-trivial projections in terms of the relation of coexistence.

\begin{lemma}\label{lem:ProjChar}
Assume that $A \in \Effect(H)\setminus \Sca(H)$. Then the following two statements are equivalent:
	\begin{itemize}
		\item[\textup{(i)}] $A \in \Proj (H)$,
		\item[\textup{(ii)}] $\# \{ B \in \Effect (H)\setminus \Sca(H) \colon B \prec A \} = 2$.
	\end{itemize}
\end{lemma}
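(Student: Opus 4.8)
The statement to be proved is Lemma~\ref{lem:ProjChar}. Observe first that $\prec$ is reflexive and that $A^\perp\prec A$ always holds, while $A\neq A^\perp$ whenever $A$ is non-scalar (otherwise $A=\tfrac12 I$); hence the set $S_A:=\{B\in\Effect(H)\setminus\Sca(H)\colon B\prec A\}$ always contains the two distinct elements $A$ and $A^\perp$, so the content of the lemma is that $A$ is a projection precisely when $S_A$ contains nothing else. The plan is to prove (i)$\Rightarrow$(ii) directly and (ii)$\Rightarrow$(i) by contraposition.

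For (i)$\Rightarrow$(ii): assume $A=P$ is a non-trivial projection and suppose, towards a contradiction, that some non-scalar $B\notin\{P,P^\perp\}$ satisfies $B\prec P$. By Lemma~\ref{lem:ProjChar0} there is a rank-one effect $R=tQ$ (with $Q\in\Proj_1(H)$, $0<t\le1$) such that $R\sim B$ but $R\not\sim P$; note $R$ is non-scalar as $\dim H\ge2$. Applying the definition of $\prec$ to $C:=R$ produces a non-scalar $D\in P^\sim$ with $R^\sim\subseteq D^\sim$. By Lemma~\ref{lem:properties}(b), $P^\sim=P^c$, so $D$ commutes with $P$; by Lemma~\ref{lem:CoexSubset}(a), $D\in R''=(tQ)''=Q''$, and a standard computation gives $Q''=\C Q+\C I$. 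Hence $D=aQ+bQ^\perp$ with $a,b\in[0,1]$, and since $D$ is non-scalar $a\neq b$, so $Q=(a-b)^{-1}(D-bI)$ is a polynomial in $D$ and therefore commutes with $P$ as well. Thus $Q\sim P$, and since $tQ$ is a convex combination of $Q$ and $0$, Lemma~\ref{lem:CoexSubset}(c) yields $Q^\sim\subseteq(tQ)^\sim=R^\sim$; as $P\in Q^\sim$ this forces $R\sim P$, a contradiction. Therefore $S_P=\{P,P^\perp\}$.

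For (ii)$\Rightarrow$(i) I argue the contrapositive: let $A$ be non-scalar and not a projection, and exhibit a third element of $S_A$. Since $A$ is non-scalar one can choose $c\in(0,1)$ with $0\neq E_A([0,c])\neq I$: if for every $c\in(0,1)$ we had $E_A([0,c])\in\{0,I\}$, then by monotonicity of $c\mapsto E_A([0,c])$ there would be a single threshold $c^\ast$ with $E_A([0,c])=0$ for $c<c^\ast$ and $=I$ for $c>c^\ast$, whence $\sigma(A)\subseteq\{c^\ast\}$ and $A=c^\ast I$ is scalar, a contradiction. Put $B:=E_A([0,c])$, a non-trivial projection, and write $A=A_1\oplus A_2$ with respect to $\Image B\oplus\Ker B$; then $\sigma(A_1)\subseteq[0,c]$ and $\sigma(A_2)\subseteq[c,1]$, so $1\notin\sigma(A_1)$ and $0\notin\sigma(A_2)$ (a one-dimensional block gives a scalar $A_j$, which is even better). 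Corollary~\ref{cor:0-neighbourhood} (together with the triviality in the one-dimensional case) supplies a single $\eta_0\in(0,1]$ with $\eta_0 X\sim A_1$ for all effects $X$ on $\Image B$ and $\eta_0 Y\sim A_2$ for all effects $Y$ on $\Ker B$. Now take any $C\in B^\sim\setminus\Sca(H)$; by Corollary~\ref{cor:ref} $C$ commutes with $B$, so $C=C_1\oplus C_2$, and then $\eta_0 C=\eta_0 C_1\oplus\eta_0 C_2\sim A$ by Lemma~\ref{lem:dirsum}. Moreover $\eta_0 C$ is non-scalar and, being a convex combination of $C$ and $0$, satisfies $C^\sim\subseteq(\eta_0 C)^\sim$ by Lemma~\ref{lem:CoexSubset}(c). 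Hence $D:=\eta_0 C$ witnesses $B\prec A$. Since $B$ is a non-trivial projection while neither $A$ nor $A^\perp$ is one, $B\notin\{A,A^\perp\}$, so $S_A$ has at least three elements and (ii) fails.

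The routine part is (i)$\Rightarrow$(ii), which is mostly bookkeeping on top of Lemma~\ref{lem:ProjChar0}; the heart of the argument is the construction in (ii)$\Rightarrow$(i). The key idea there is to cut the spectrum of the non-projection $A$ by a spectral projection $B$ so that one block of $A$ avoids the eigenvalue $1$ and the other avoids $0$: this is exactly what makes a \emph{single} dilation factor $\eta_0$ coexist with both blocks simultaneously, which in turn forces $C^\sim\subseteq(\eta_0 C)^\sim=D^\sim$ with $D=\eta_0 C\sim A$. I expect the only mildly delicate point to be the elementary spectral-theory fact guaranteeing such a $c$, but this reduces at once to the observation that a self-adjoint operator whose spectrum is not a single point cannot be scalar.
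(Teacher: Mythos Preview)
Your proof is correct and follows essentially the same strategy as the paper's. The implication (i)$\Rightarrow$(ii) is virtually identical: both invoke Lemma~\ref{lem:ProjChar0} to produce a rank-one $tQ$ in $B^\sim\setminus P^\sim$, push it through the definition of $\prec$ to a non-scalar $D\in P^c$ with $(tQ)^\sim\subseteq D^\sim$, use $D\in Q''$ to force $Q\in P^c$, and reach the contradiction $tQ\sim P$.

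For (ii)$\Rightarrow$(i) the idea is again the same---manufacture a non-trivial spectral projection of $A$ and show it lies below $A$ for $\prec$ by a uniform scaling argument---but your execution is slightly more streamlined than the paper's. The paper splits $H$ into three spectral pieces $H_A([0,\varepsilon])\oplus H_A((\varepsilon,1-\varepsilon])\oplus H_A((1-\varepsilon,1])$ and verifies $\varepsilon C\sim A$ by exhibiting the explicit decomposition required in Lemma~\ref{lem:MANIA}. You instead make a single cut at $c$, obtain $1\notin\sigma(A_1)$ and $0\notin\sigma(A_2)$, and then let Corollary~\ref{cor:0-neighbourhood} (applied on each block) together with Lemma~\ref{lem:dirsum} do the work of MANIA for you. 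This buys a shorter argument with one fewer block and no need for the auxiliary claim that some $H_A((\varepsilon,1-\varepsilon])$ is non-trivial; the price is the extra dependence on Corollary~\ref{cor:0-neighbourhood}, which is itself a trivial consequence of MANIA, so nothing is really lost.
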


\begin{proof}
\emph{(i)$\Longrightarrow$(ii):}
Suppose that $B \in \Effect(H)\setminus \Sca(H)$, $B\neq A$, $B\neq A^\perp$ and $B \prec A$. 
We need to show that this assumption leads to a contradiction.
By Lemma \ref{lem:ProjChar0} there exists a rank one effect $tQ$, with some $Q\in\Proj_1(H)$ and $t \in (0,1]$, such that $tQ \sim B$ but $tQ\not\sim A$. 
From $B \prec A$ we know that there exists a non-scalar effect $D$ such that
\begin{equation*}
	(tQ)^\sim \subseteq D^\sim \ \ \ {\rm and} \ \ \ D  \sim A.
\end{equation*}
By Lemma \ref{lem:CoexSubset} (a) we have 
$$
D \in (tQ)''\cap \Effect(H) = Q''\cap \Effect(H) = \left\{ sQ + r Q^\perp \in \Effect(H) \colon s,r \in [0,1] \right\},
$$
where the latter equation is easy to see (even in non-separable Hilbert spaces). 
Since we also have $D\in A^c$, we obtain $Q\in A^c$, hence the contradiction $tQ\in A^c = A^\sim$.

\emph{(ii)$\Longrightarrow$(i):}
Here we use contraposition, so let us assume that $A \in \left(\Effect(H)\setminus\Proj(H)\right)\setminus \Sca(H)$. 
We shall construct a non-trivial projection $P$ (which is obviously different from both $A$ and $A^\perp$) such that $P\prec A$.
First, notice that there exists an $0 < \varepsilon < \tfrac{1}{2}$ such that $H_A\left( \left( \varepsilon, 1-\varepsilon \right] \right) \notin \left\{ \{0\}, H \right\}$.
Indeed, otherwise an elementary examination of the spectrum gives that $\sigma(A) \subseteq \{\varepsilon_0,1-\varepsilon_0\}$ holds with some $0 < \varepsilon_0 < \tfrac{1}{2}$. 
As $A$ is non-scalar, we actually get $\sigma(A) = \{\varepsilon_0,1-\varepsilon_0\}$, which implies that $H_A\left( \left( \varepsilon_0, 1-\varepsilon_0 \right] \right)$ is a non-trivial subspace.

Let us now consider the orthogonal decomposition $H = H_1 \oplus H_2 \oplus H_3$ where
\begin{equation*}
	H_1 = H_A\left( \left[ 0, \varepsilon \right] \right), \;\;
	H_2 = H_A\left( \left( \varepsilon, 1-\varepsilon \right] \right) \;\; \text{and} \;\;
	H_3 = H_A\left( \left( 1-\varepsilon, 1 \right] \right).
\end{equation*}
With respect to this orthogonal decomposition we have
\begin{equation*}
	A =  
	\left[ \begin{matrix} 
		A_1 & 0 & 0 \\
		0 & A_2 & 0 \\ 
		0 & 0 & A_3 
		\end{matrix} \right] \in \Effect(H_1 \oplus H_2 \oplus H_3).
\end{equation*}
Since coexistence is invariant under taking the ortho-complements, we may assume without loss of generality that $H_3\neq\{0\}$. 
Let us set 
\begin{equation*}
	P = \left[ \begin{matrix} 
			I & 0 & 0 \\
			0 & I & 0 \\ 
			0 & 0 & 0 
		\end{matrix} \right] \notin \Sca(H_1 \oplus H_2 \oplus H_3).
\end{equation*}
Our goal is to show that $P \prec A$. 
Let $C$ be an arbitrary non-scalar effect coexistent with $P$. Then, since $C$ and $P$ commute, the matrix form of $C$ is
\begin{equation*}
	C = \left[ \begin{matrix} 
		C_{11} & C_{12} & 0 \\
		C_{12}^* & C_{22} & 0 \\ 
		0 & 0 & C_{33} 
		\end{matrix} \right] \in \Effect(H_1 \oplus H_2 \oplus H_3).
\end{equation*}
Consider the effect $D := \varepsilon\cdot C$ and notice that
\begin{equation*}
	\varepsilon \cdot \left[ \begin{matrix} 
		C_{11} & C_{12} & 0 \\
		C_{12}^* & C_{22} & 0 \\ 
		0 & 0 & 0 
		\end{matrix} \right] \leq I-A \;\; \text{and} \;\; 
	\varepsilon \cdot \left[ \begin{matrix} 
		0 & 0 & 0 \\
		0 & 0 & 0 \\ 
		0 & 0 & C_{33} 
		\end{matrix} \right] \leq A.
\end{equation*}
Clearly, by Lemmas \ref{lem:MANIA} and \ref{lem:CoexSubset} we have $D\sim A$ and $C^\sim \subseteq D^\sim$, which completes the proof.
\end{proof}

Next, we characterise commutativity preservers on $\Proj(H)$.
We note that the following theorem has been proved before implicitly in \cite{MoS1} for separable spaces, and was stated explicitly in \cite[Theorem 2.8]{MoS2}. 
In order to prove the theorem for general spaces, one only has to use the ideas of \cite{MoS1}, however, we decided to include the proof for the sake of completeness and clarity.

\begin{theorem}\label{thm:ProjComm}
Let $H$ be a Hilbert space of dimension at least three and $\phi\colon \Proj(H)\to \Proj(H)$ be a bijective mapping that preserves commutativity in both directions, i.e.
\begin{equation}\label{eq:ProjComm}
	PQ = QP \;\; \iff \;\; \phi(P)\phi(Q) = \phi(Q)\phi(P) \qquad (P,Q \in \Proj(H)).
\end{equation}
Then there exists a unitary or antiunitary operator $U\colon H\to H$ such that
\begin{equation*}
	\phi(P) \in \{ UPU^*, UP^\perp U^* \} \qquad (P \in \Proj(H)).
\end{equation*}
\end{theorem}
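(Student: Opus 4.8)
The plan is to exploit the correspondence between commutativity of projections and the order/orthogonality structure of the underlying projective lattice, and then invoke a version of the fundamental theorem of projective geometry. First I would observe that for a pair of rank-one projections $P_x, P_y \in \Proj_1(H)$ one has $P_x P_y = P_y P_x$ if and only if $\Image P_x = \Image P_y$ or $\Image P_x \perp \Image P_y$. Hence $\phi$ restricted to $\Proj_1(H)$ must map orthogonal-or-equal pairs to orthogonal-or-equal pairs in both directions; however we first have to argue that $\phi(\Proj_1(H)) = \Proj_1(H)$. This is an order-theoretic reconstruction: a projection $P$ is rank-one exactly when its \emph{commutant within $\Proj(H)$} is maximal in a suitable sense — more precisely, $P$ has rank one iff $\{P\}^{cc}\cap\Proj(H) = \{0, P, P^\perp, I\}$, i.e. the smallest ``commutative block'' containing $P$ is a $4$-element Boolean algebra, and this condition is phrased purely via \eqref{eq:ProjComm}. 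Dually, $\phi$ and $\phi^{-1}$ both preserve ranks of this type, so $\phi(\Proj_1(H)) = \Proj_1(H)$, and similarly $\phi$ preserves the family of co-atoms (corank-one projections); since $\dim H \ge 3$ these two families are distinguishable, so without loss of generality (composing with $\perp$ if necessary is \emph{not} available globally, but we can track the ambiguity) we may treat $\phi$ on $\Proj_1(H)$.

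Next I would reconstruct orthogonality of rank-one projections from commutativity. Two distinct $P, Q \in \Proj_1(H)$ are orthogonal iff they commute; and if they commute, the $4$-element block they generate is $\{0, P, Q, P\vee Q\}$ wait — here $P\vee Q$ has rank two, but what matters is: $P\perp Q$ iff there is a third rank-one projection $R$ commuting with both such that $\{P,Q,R\}$ are pairwise commuting and pairwise distinct and generate a $2^3$-Boolean algebra. So ``$\phi$ preserves orthogonality of rank-one projections in both directions.'' Now I would apply Uhlhorn's version of Wigner's theorem (the variant requiring only preservation of orthogonality, valid for $\dim H \ge 3$): there is a unitary or antiunitary $U$ with $\phi(P_x) = U P_x U^*$ for all $P_x \in \Proj_1(H)$. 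Replacing $\phi$ by $A \mapsto U^*\phi(A)U$, we may assume $\phi$ fixes every rank-one projection.

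Finally I would bootstrap from rank-one projections to all projections. A general projection $P$ is the supremum of the rank-one projections below it; equivalently, $P$ is determined by the set $\{R \in \Proj_1(H) : R \le P\}$, and $R \le P$ can be detected via commutativity as: $R$ commutes with $P$ and every rank-one projection commuting with both $R$ and $P$ either commutes with... — the cleanest route is: $R \le P$ iff $R$ commutes with $P$ and $R \not\le P^\perp$, and ``$R \le P$ or $R \le P^\perp$'' is exactly ``$R$ commutes with $P$'' for $R$ rank-one. So for each $P$, the pair $\{\,\{R : R\le P\},\ \{R : R \le P^\perp\}\,\}$ is an unordered pair of orthogonally-closed subspaces, and $\phi$ preserves this structure; since $\phi$ fixes all rank-one projections, $\phi(P)$ must have the same downward rank-one set as $P$ or as $P^\perp$, forcing $\phi(P) \in \{P, P^\perp\}$. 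Undoing the conjugation by $U$ gives the statement.

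The main obstacle I anticipate is the very first reduction: proving $\phi(\Proj_1(H)) = \Proj_1(H)$ and, more delicately, handling the $\perp$-ambiguity consistently — a priori $\phi$ could send some rank-one projections to rank-one projections and others to corank-one projections, and ruling this out uses $\dim H \ge 3$ together with a careful lattice-theoretic characterization of rank that is uniform across the whole family. Everything after Uhlhorn's theorem is routine; getting into a position to apply Uhlhorn (i.e. establishing orthogonality-preservation on $\Proj_1(H)$ purely from \eqref{eq:ProjComm}) is where the real work lies.
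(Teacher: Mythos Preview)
Your overall architecture matches the paper's proof exactly: characterise rank-one (or corank-one) projections in terms of commutativity alone, reduce to a bijection of $\Proj_1(H)$ preserving orthogonality, apply Uhlhorn's theorem, and then recover a general projection from the set of rank-one projections it commutes with. The final bootstrapping step and the invocation of Uhlhorn are fine.

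There is, however, a genuine gap in your first reduction. Your proposed criterion ``$P$ has rank one iff $\{P\}^{\pc\pc} = \{0,P,P^\perp,I\}$'' (double commutant taken inside $\Proj(H)$) does not work: for \emph{every} non-trivial projection $P$ one has $P^{\pc} = \{Q_1\oplus Q_2 : Q_i\in\Proj(H_i)\}$ with $H_1=\Image P$, $H_2=\Ker P$, and hence $P^{\pc\pc}=\{0,P,P^\perp,I\}$ regardless of $\rank P$. So this condition cannot single out rank-one (or corank-one) projections. The paper repairs this by working with \emph{pairs}: for commuting $P,Q$ one computes $\#\{P,Q\}^{\pc\pc}=2^k$, where $k$ counts how many of the four subspaces $\Image P\cap\Image Q$, $\Image P\cap\Ker Q$, $\Ker P\cap\Image Q$, $\Ker P\cap\Ker Q$ are non-zero, and then observes that $P$ or $P^\perp$ lies in $\Proj_1(H)$ precisely when $\#\{P,Q\}^{\pc\pc}\in\{4,8\}$ for every $Q\in P^{\pc}$.

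Finally, the rank-one/corank-one ambiguity you flag as the main obstacle is in fact harmless. Since $P^{\pc}=Q^{\pc}$ holds iff $Q\in\{P,P^\perp\}$, one gets $\phi(P^\perp)=\phi(P)^\perp$ for all $P$; hence for each complementary pair $\{P,P^\perp\}$ with $P\in\Proj_1(H)$ exactly one of $\phi(P),\phi(P^\perp)$ is rank-one, and swapping the images on selected pairs preserves \eqref{eq:ProjComm}. So one may assume $\phi(\Proj_1(H))=\Proj_1(H)$ without any global consistency argument.
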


\begin{proof}
	For an arbitrary set $\calM \subseteq \Proj(H)$ let us use the following notations: $\calM^\pc := \calM^c \cap \Proj(H)$ and $\calM^{\pc\pc} := (\calM^\pc)^\pc$.
	By the properties of $\phi$ we immediately get $\phi(\calM^\pc) = \phi(\calM)^\pc$ and $\phi(\calM^{\pc\pc}) = \phi(\calM)^{\pc\pc}$ for all subset $\calM$.
	
	Next, let $P$ and $Q$ be two arbitrary commuting projections.
	Then (for instance by the Halmos's two projections theorem) we have
	\begin{align*}
		P = 
		\left[\begin{matrix}
			I & 0 & 0 & 0 \\
			0 & I & 0 & 0 \\
			0 & 0 & 0 & 0 \\
			0 & 0 & 0 & 0 \\
		\end{matrix}\right] \;\; \text{and} \;\;
		Q = 
		\left[\begin{matrix}
			I & 0 & 0 & 0 \\
			0 & 0 & 0 & 0 \\
			0 & 0 & I & 0 \\
			0 & 0 & 0 & 0 \\
		\end{matrix}\right] \in \Bdd(H_1\oplus H_2 \oplus H_3 \oplus H_4)
	\end{align*}
	where $H_1 = \Image P \cap \Image Q$, $H_2 = \Image P \cap \Ker Q$, $H_3 = \Ker P \cap \Image Q$, $H_4 = \Ker P \cap \Ker Q$ and $H = H_1\oplus H_2 \oplus H_3 \oplus H_4$.
	Note that some of these subspaces might be trivial.
	We observe that
	\begin{align*}
		\{P,Q\}^{\pc\pc} = (\{P,Q\}^\pc)^\pc 
		&= \left\{ 
			\left[\begin{matrix}
				R_1 & 0 & 0 & 0 \\
				0 & R_2 & 0 & 0 \\
				0 & 0 & R_3 & 0 \\
				0 & 0 & 0 & R_4 \\
			\end{matrix}\right] \colon R_j \in \Proj(H_j),\; j=1,2,3,4
		 \right\}^\pc \\
		 &= \left\{ 
			\left[\begin{matrix}
				\lambda_1 I & 0 & 0 & 0 \\
				0 & \lambda_2 I & 0 & 0 \\
				0 & 0 & \lambda_3 I & 0 \\
				0 & 0 & 0 & \lambda_4 I \\
			\end{matrix}\right] \colon \lambda_j \in \{0,1\},\; j=1,2,3,4
		 \right\}.
	\end{align*}
	Hence we conclude that $\#\{P,Q\}^{\pc\pc} = 2^{\#\{j\colon H_j \neq \{0\}\}}$.
	In particular, $\#\{P,Q\}^{\pc\pc} = 2$ if and only if $P,Q\in\{0,I\}$, and $\#\{P,Q\}^{\pc\pc} = 4$ if and only if either $P\notin\{0,I\}$ and $Q\in\{I,0,P,P^\perp\}$, or $Q\notin\{0,I\}$ and $P\in\{I,0,Q,Q^\perp\}$.
	
	Now, we easily conclude the following characterisation of rank-one and co-rank-one projections:
	\begin{equation*}
		P \;\text{or}\; P^\perp \in \Proj_1(H) \;\;\iff\;\; \#\{P,Q\}^{\pc\pc} \in \{4,8\} \;\;\text{holds for all}\; Q\in P^\pc.
	\end{equation*}
	This implies that
	\begin{equation*}
		\phi(\{P \colon P \;\text{or}\; P^\perp \in \Proj_1(H)\}) = \{P \colon P \;\text{or}\; P^\perp \in \Proj_1(H)\}.
	\end{equation*}
	Note that we also have $\phi(P^\perp) = \phi(P)^\perp$ for every $P\in \Proj(H)$, as $P^\pc = Q^\pc$ holds exactly when $P=Q$ or $P+Q = I$.
	Since changing the images of some pairs of ortho-complemented projections to their orto-complementations does not change the property \eqref{eq:ProjComm}, we may assume without loss of generality that $\phi(\Proj_1(H)) = \Proj_1(H)$.
	It is easy to see that two rank-one projections commute if and only if either they coincide, or they are orthogonal to each other.
	Thus, as $\dim H \geq 3$, Uhlhorn's theorem \cite{U} gives that there exist a unitary or antiunitary operator $U\colon H\to H$ such that 
	\begin{equation*}
		\phi(P) = UPU^* \qquad (P \in \Proj_1(H)).
	\end{equation*}
	
	Finally, note that for every projection $Q\in\Proj(H)$ we have
	\begin{equation*}
		Q^\pc\cap\Proj_1(H) = \{ P\in\Proj_1(H) \colon \Image P \subset \Image Q \cup \Ker Q \},
	\end{equation*}
	from which we easily complete the proof.
\end{proof}

Before we prove Theorem \ref{thm:main} in the general case, we need one more technical lemma for non-separable Hilbert spaces.
We will use the notation $\Effect_{fs}(H)$ for the set of all effects whose spectrum has finitely many elements.

\begin{lemma}\label{lem:ccFinSpec}
	For all $A\in \Effect_{fs}(H)$ we have 
	\begin{equation*}
		A^{cc} = A'' \cap \Effect(H) = \{ p(A)\in \Effect(H) \colon p \text{ is a polynomial} \}.
	\end{equation*}
\end{lemma}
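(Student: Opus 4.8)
We must show three sets coincide: $A^{cc}$, $A'' \cap \Effect(H)$, and the set of all effect-valued polynomials in $A$. The inclusion $\{p(A)\in\Effect(H)\} \subseteq A''\cap\Effect(H)$ is immediate since any polynomial in $A$ commutes with everything that commutes with $A$. For $A''\cap\Effect(H)\subseteq A^{cc}$, note that $A^c = A'\cap\Effect(H) \subseteq A'$, so $(A')' \subseteq (A^c)'$; intersecting with $\Effect(H)$ gives $A''\cap\Effect(H)\subseteq A^{cc}$ — but one must be slightly careful, since $A^{cc} = (A^c)^c$ is by definition a double \emph{commutant-inside-$\Effect(H)$}, not the double commutant in $\Bdd(H)$. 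The cleanest route is to observe that $A^{cc} \subseteq A^{c\prime}\cap\Effect(H)$ trivially, and conversely anything in $A^{c\prime}$ that is also an effect lies in $A^{cc}$; so it suffices to compare $A^{c\prime}$ with $A''$. The key point, which I would isolate as the first step, is that $A^c$ and $A'$ generate the same commutant: since $A$ is self-adjoint with finite spectrum, $A' $ is a von Neumann algebra generated by the finitely many spectral projections $E_A(\{\lambda\})$, $\lambda\in\sigma(A)$, and each such projection is an effect, hence lies in $A^c$. Therefore $A^c$ contains a generating set for the von Neumann algebra $A'$, whence $(A^c)' = A''$.

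**Carrying it out.** First I would write $\sigma(A) = \{\lambda_1,\dots,\lambda_n\}$ and $P_i := E_A(\{\lambda_i\})$, so that $A = \sum_{i=1}^n \lambda_i P_i$ with the $P_i$ mutually orthogonal projections summing to $I$. Each $P_i\in\Proj(H)\subseteq\Effect(H)$ and $P_iA = AP_i$, so $P_i\in A^c$. The $\ast$-algebra generated by $\{P_1,\dots,P_n\}$ is all of $\linspan\{P_1,\dots,P_n\}$ (a finite-dimensional abelian $\ast$-algebra containing $I$), and this algebra equals $\{p(A)\colon p\text{ polynomial}\}$ because a polynomial in $A$ is $\sum_i p(\lambda_i)P_i$ and, conversely, by Lagrange interpolation every $\sum_i c_i P_i$ is $p(A)$ for a suitable polynomial $p$. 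By the double commutant theorem (or just directly, since the algebra is finite-dimensional hence WOT-closed and contains $I$), its commutant's commutant is itself: $\{P_1,\dots,P_n\}'' = \linspan\{P_1,\dots,P_n\}$. On the other hand $\{P_1,\dots,P_n\}' = A'$: the inclusion $\supseteq$ is clear since each $P_i$ is a polynomial in $A$, and $\subseteq$ holds because $T$ commuting with every $P_i$ commutes with $A=\sum\lambda_i P_i$. Hence $A'' = \{P_1,\dots,P_n\}'' = \linspan\{P_1,\dots,P_n\}$, and intersecting with $\Effect(H)$ gives exactly $\{p(A)\in\Effect(H)\}$.

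**Closing the loop.** It remains to connect $A^{cc}$ to this. Since $\{P_1,\dots,P_n\}\subseteq A^c \subseteq A'$, taking commutants reverses inclusions: $A'' \subseteq (A^c)' \subseteq \{P_1,\dots,P_n\}' = A'$, and taking commutants once more, $A'' = \{P_1,\dots,P_n\}'' \supseteq (A^c)'' \supseteq A''$, so $(A^c)' = A'$ and $(A^c)'' = A''$. Now $A^{cc} = (A^c)^c = (A^c)^c$; I would argue $A^{cc} = (A^c)'\cap\Effect(H)$: indeed $(A^c)^c = \{C\in\Effect(H)\colon CB=BC\ \forall B\in A^c\} = (A^c)'\cap\Effect(H) = A'\cap\Effect(H) = A^c$... wait — more carefully, $(A^c)' = A'$ gives $A^{cc} = (A^c)'\cap\Effect(H)$ only if I first note $A^{cc}$ is defined as operators in $\Effect(H)$ commuting with all of $A^c$, i.e. $A^{cc} = (A^c)'\cap\Effect(H) = A'\cap\Effect(H)$. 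But that is just $A^c$ again, which is wrong in general — so the subtlety is that $(A^c)'$ need not equal $A'$ as sets of \emph{all} bounded operators unless $A^c$ generates $A'$ as an algebra, which it does here precisely because it contains the $P_i$. So in fact $A^{cc} = A'\cap\Effect(H) = \linspan\{P_i\}\cap\Effect(H) = \{p(A)\in\Effect(H)\}$, and all three sets agree.

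**Main obstacle.** The genuine content is entirely in the finite-spectrum hypothesis: it guarantees the spectral projections are finite in number and each is itself an effect, so $A^c$ literally contains a $\ast$-generating set of the von Neumann algebra $A'$. The only thing to be careful about is not to confuse the "commutant inside $\Effect(H)$" operations ($\cdot^c$, $\cdot^{cc}$) with genuine operator commutants; the clean statement is $A^{cc} = A'\cap\Effect(H)$, valid here because $(A^c)'=A'$. For operators with infinite spectrum this breaks down — spectral projections for one-point sets may vanish — which is exactly why the lemma is stated only for $\Effect_{fs}(H)$, and no separability is needed since the argument is purely algebraic in the finitely many $P_i$.
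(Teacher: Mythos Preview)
Your approach is essentially the same as the paper's: both identify the spectral projections $P_1,\dots,P_n$, show $A'' = \linspan\{P_1,\dots,P_n\}$, and use Lagrange interpolation for the polynomial description. The paper just compresses this into a one-line chain of equalities by directly computing $A^c$ as the set of block-diagonal effects and $A^{cc}$ as the effects with scalar blocks.

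There is, however, a genuine error in your ``Closing the loop'' paragraph. From $\{P_i\}\subseteq A^c$ you deduce ``$\{P_i\}''\supseteq (A^c)''$'', but the double commutant \emph{preserves} inclusions, so the correct conclusion is $\{P_i\}''\subseteq (A^c)''$, and your sandwich argument collapses. Worse, your final line asserts $A^{cc}=A'\cap\Effect(H)=\linspan\{P_i\}\cap\Effect(H)$; the first equality is exactly the wrong one you yourself flagged two sentences earlier (it would give $A^{cc}=A^c$), and the second equality is false since $A'\cap\Effect(H)$ consists of \emph{all} block-diagonal effects, not just those with scalar blocks. You appear to have meant $A''$ rather than $A'$ in that line, but the argument leading to it does not establish $(A^c)'=A''$.

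The clean fix is to argue that $(A^c)'=A''$ directly: since $A^c=A'\cap\Effect(H)$ and every element of the $\ast$-algebra $A'$ is a complex linear combination of elements of $A'\cap\Effect(H)$ (write any self-adjoint $T\in A'$ as a scalar multiple of an effect plus a scalar), the linear span of $A^c$ is all of $A'$, whence $(A^c)'=(A')'=A''$. Then $A^{cc}=(A^c)'\cap\Effect(H)=A''\cap\Effect(H)=\linspan\{P_i\}\cap\Effect(H)$ as desired. Alternatively, as the paper does, simply observe that an effect commuting with every block-diagonal effect must commute with every rank-one projection subordinate to each $P_j$, forcing it to be scalar on each block.
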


\begin{proof}
	We only have to observe the following for all $A\in \Effect(H)$ with $\#\sigma(A) = n \in\N$, where $E_1, \dots E_n$ are the spectral projections and $H_j = \Image E_j$ $(j=1,2,\dots n)$:
	\begin{align*}
		A^{cc} & = \left( \bigcap_{j=1}^n E_j^c \right)^c = \left\{ \bigoplus_{j=1}^n B_j\colon B_j \in \Effect\left(H_j\right) \text{ for all } j \right\}^c = \left\{ \sum_{j=1}^n \mu_j E_j \colon \mu_j \in [0,1] \text{ for all } j \right\} \\
		&= \left\{ \bigoplus_{j=1}^n T_j \colon T_j \in \Bdd(H_j) \text{ for all } j \right\}' \cap \Effect(H) = \left( \bigcap_{j=1}^n E_j' \right)' \cap \Effect(H) = A'' \cap \Effect(H).
	\end{align*}
\end{proof}

Now, we are in the position to prove our second main theorem in the general case.

\begin{proof}[Proof of Theorem \ref{thm:main} for spaces of dimension at least three]
The proof will be divided into the following steps:
\begin{itemize}
	\item[1] we show that $\phi$ maps $\Effect_{fs}(H)$ onto itself,
	\item[2] we prove that $\phi$ has the form \eqref{eq:nonScaPhi} on $\Effect_{fs}(H)\setminus \Sca(H)$,
	\item[3] we show that $\phi$ has the form \eqref{eq:nonScaPhi} on $\Effect(H)\setminus \Sca(H)$.
\end{itemize} 

\smallskip

\emph{STEP 1:} 
First, similarly as in the previous section, we easily get the existence of a bijective function $g\colon [0,1] \to [0,1]$ such that
\begin{equation*}
	\phi(t I) = g(t)I \qquad (t\in[0,1]).
\end{equation*}
Of course, the properties of $\phi$ imply $\phi(A)^\sim = \phi(A^\sim)$ for all $A\in \Effect(H)$, and also
\begin{equation}\label{eq:philesssim}
	B^\sim \subseteq A^\sim \;\;\iff\;\; \phi(B)^\sim \subseteq \phi(A)^\sim \qquad (A,B\in \Effect(H)).
\end{equation}
From the latter it follows that
\begin{equation}\label{eq:philesssim}
	B \prec A \;\;\iff\;\; \phi(B) \prec \phi(A) \qquad (A,B\in \Effect(H)\setminus \Sca(H)).
\end{equation}
Hence by Lemma \ref{lem:ProjChar} we obtain 
\begin{equation*}
	\phi(P(H)\setminus\{0,I\}) = P(H)\setminus\{0,I\}, 
\end{equation*}
and therefore Lemma \ref{lem:properties} (b) implies that the restriction $\phi|_{P(H)\setminus\{0,I\}}$ preserves commutativity in both directions. 
Applying Theorem \ref{thm:ProjComm} then gives that up to unitary--antiunitary equivalence and element-wise ortho-complementation, we have
\begin{equation}\label{eq:ProjFix}
	\phi(P) = P \qquad (P\in P(H)\setminus\{0,I\}).
\end{equation}
From now on we may assume without loss of generality that this is the case.

Next, by the spectral theorem \cite[Theorem IX.2.2]{Conway} we have
\begin{equation*}
	A^c = \bigcap_{\Delta\in\mathcal{B}_{[0,1]}} E_A(\Delta)^c = \bigcap_{\Delta\in\mathcal{B}_{[0,1]}} E_A(\Delta)^\sim \qquad (A\in \Effect(H)).
\end{equation*}
Therefore we obtain
\begin{align*}
	\phi(A^c) 
	= \bigcap_{\Delta\in\mathcal{B}_{[0,1]}} \phi(E_A(\Delta))^\sim 
	= \bigcap_{\Delta\in\mathcal{B}_{[0,1]}} E_A(\Delta)^\sim = A^c \quad (A\in \Effect(H)),
\end{align*}
and thus also
\begin{equation*}
	\phi(A^{cc}) = \phi\left(\bigcap_{B\in A^c} B^c\right) = \bigcap_{B\in A^c} \phi\left(B^c\right) = \bigcap_{B\in A^c} B^c = A^{cc} \qquad (A\in \Effect(H)).
\end{equation*}
In particular, we have
\begin{equation*}
	\phi(A)\in A^{cc} \qquad (A\in \Effect(H)).
\end{equation*} 
Hence for all $A\in \Effect_{fs}(H)$ there exists a polynomial $p_A$ such that $p_A(\sigma(A)) \subset [0,1]$ and
\begin{align*}
	\phi(A) = p_A(A) \qquad (A\in \Effect_{fs}(H)).
\end{align*}
As a similar statement holds for $\phi^{-1}$, we immediately get $\phi(\Effect_{fs}(H)) = \Effect_{fs}(H)$.
Also, notice that $\#\sigma(\phi(A)) = \#\sigma(p_A(A)) \leq \#\sigma(A)$ and $\#\sigma(\phi^{-1}(A)) \leq \#\sigma(A)$ hold for all $A\in \Effect_{fs}(H)$.
Whence we obtain 
\begin{equation}\label{eq:SpecCard}
	\#\sigma(\phi(A))= \#\sigma(A) \qquad (A\in \Effect_{fs}(H)).
\end{equation}
In particular, the restriction $p_A|_{\sigma(A)}$ is injective.

\smallskip

\emph{STEP 2:}
Now, let $M$ be an arbitrary two-dimensional subspace of $H$ and let $P_M\in\Proj(H)$ be the orthogonal projection onto $M$. 
Consider two arbitrary effects $A,B\in (P_M)^\sim \cap \Effect_{fs}(H)$ which therefore have the following matrix representations:
\begin{equation*}
	A = \left[\begin{matrix}
		A_M & 0 \\
		0 & A_{M^\perp}
	\end{matrix}\right]
	\quad\text{and}\quad
	B = \left[\begin{matrix}
		B_M & 0 \\
		0 & B_{M^\perp}
	\end{matrix}\right] \in \Effect_{fs}(M \oplus M^\perp).
\end{equation*}
Obviously,
\begin{equation*}
	\phi(A) = p_A(A) = \left[\begin{matrix}
		p_A(A_M) & 0 \\
		0 & p_A(A_{M^\perp})
	\end{matrix}\right]
	\quad\text{and}\quad
	\phi(B) = p_B(B) = \left[\begin{matrix}
		p_B(B_M) & 0 \\
		0 & p_B(B_{M^\perp})
	\end{matrix}\right].
\end{equation*}
Note that by \eqref{eq:SpecCard}, the polynomial $p_A$ acts injectively on $\sigma(A)$, therefore 
\begin{equation*}
	A_M\in \Sca(M) \;\;\iff\;\; p_A(A_M)\in \Sca(M),
\end{equation*}
and of course, similarly for $B$.
We observe that by Lemma \ref{lem:dirsum} the following two equations hold:
\begin{align}\label{eq:2dim1}
	A^\sim
	\bigcap
	\left[\begin{matrix}
		I & 0 \\
		0 & 0
	\end{matrix}\right]^\sim
	\bigcap
	\left(
		\bigcap_{P\in\Proj_1(M^\perp)}
		\left[\begin{matrix}
			0 & 0 \\
			0 & P
		\end{matrix}\right]^\sim
	\right)	
	=
	\left\{
		\left[\begin{matrix}
			D & 0 \\
			0 & \lambda I
		\end{matrix}\right] 
		\colon D\sim A_M, \lambda\in[0,1]
	\right\}
\end{align}
and
\begin{align}\label{eq:2dim2}
	\phi(A)^\sim
	\bigcap
	\left[\begin{matrix}
		I & 0 \\
		0 & 0
	\end{matrix}\right]^\sim
	\bigcap
	\left(
		\bigcap_{P\in\Proj_1(M^\perp)}
		\left[\begin{matrix}
			0 & 0 \\
			0 & P
		\end{matrix}\right]^\sim
	\right)	
	=
	\left\{
		\left[\begin{matrix}
			D & 0 \\
			0 & \lambda I
		\end{matrix}\right] 
		\colon D\sim p_A(A_M), \lambda\in[0,1]
	\right\}.
\end{align}
It is important to observe that by \eqref{eq:ProjFix} the set in \eqref{eq:2dim2} is the $\phi$-image of \eqref{eq:2dim1}.
Thus we obtain the following equivalence if $A_M \notin \Sca(M)$:
\begin{align}\label{eq:2x2cornercoex}
	B_M \in \{A_M, A_M^\perp\}
	\;&\iff\;
	A_M^\sim = B_M^\sim \nonumber
\\
	&\iff\;
	\left\{
		\left[\begin{matrix}
			D & 0 \\
			0 & \lambda I
		\end{matrix}\right] 
	\colon D\sim A_M, \lambda\in[0,1]\right\}
	=
	\left\{
		\left[\begin{matrix}
			E & 0 \\
			0 & \mu I
		\end{matrix}\right] 
	\colon E\sim B_M, \mu\in[0,1]\right\} \nonumber
\\
	&\iff\;
	\left\{
		\left[\begin{matrix}
			D & 0 \\
			0 & \lambda I
		\end{matrix}\right] 
	\colon D\sim p_A(A_M), \lambda\in[0,1]\right\}
	=
	\left\{
		\left[\begin{matrix}
			E & 0 \\
			0 & \mu I
		\end{matrix}\right] 
	\colon E\sim p_B(B_M), \mu\in[0,1]\right\} \nonumber
\\
	&\iff\;
	\left(p_A(A_M)\right)^\sim = \left(p_B(B_M)\right)^\sim \nonumber
\\
	&\iff\;
	p_B(B_M) \in \left\{p_A(A_M), I-p_A(A_M) \right\}.
\end{align}

Now, we are in the position to use the previously proved two-dimensional version.
Let
\begin{equation*}
	\mathfrak{E}(M) := \left\{ \left\{D,D^\perp\right\} \colon D\in \Effect(M)\setminus \Sca(M) \right\} \cup \{\Sca(M)\},
\end{equation*}
and let us say that two elements of $\mathfrak{E}(M)$ are coexistent, in notation $\approx$, if either one of them is $\Sca(M)$, or the two elements are $\{D,D^\perp\}$ and $\{E,E^\perp\}$ with $D\sim E$.
Clearly, the bijective restriction 
\begin{equation*}
	\phi|_{(P_M)^\sim \cap \Effect_{fs}(H)}\colon (P_M)^\sim \cap \Effect_{fs}(H) \to (P_M)^\sim \cap \Effect_{fs}(H)
\end{equation*}
induces a well-defined bijection on $\mathfrak{E}(M)$ by 
\begin{equation*}
	\Sca(M)\mapsto \Sca(M), \; \{A_M,A_M^\perp\}\mapsto \{p_A(A_M), p_A(A_M)^\perp\} \qquad (A_M \notin \Sca(M)).
\end{equation*}
Notice that this map also preserves the relation $\approx$ in both directions.
Indeed, for all $A,B\in (P_M)^\sim \cap \Effect_{fs}(H)$, $A_M,B_M \notin \Sca(H)$ we have
\begin{align*}
	\{A_M, A_M^\perp\} \approx \{B_M, B_M^\perp\} 
	&\;\iff\; \hat{A} := \left[\begin{matrix}
			A_M & 0 \\
			0 & 0
		\end{matrix}\right] \sim 
		\hat{B} := \left[\begin{matrix}
			B_M & 0 \\
			0 & 0
		\end{matrix}\right] \\
	&\;\iff\; \left[\begin{matrix}
			p_{\hat{A}}(A_M) & 0 \\
			0 & p_{\hat{A}}(0) I
		\end{matrix}\right] \sim 
		\left[\begin{matrix}
			p_{\hat{B}}(B_M) & 0 \\
			0 & p_{\hat{B}}(0) I
		\end{matrix}\right]\\
	&\;\iff\; \{p_{\hat{A}}(A_M), p_{\hat{A}}(A_M)^\perp\} \approx \{p_{\hat{B}}(B_M), p_{\hat{B}}(B_M)^\perp\} \\
	&\;\iff\; \{p_{A}(A_M), p_{A}(A_M)^\perp\} \approx \{p_{B}(B_M), p_{B}(B_M)^\perp\}.
\end{align*}
Therefore, using the two-dimensional version of Theorem \ref{thm:main}, we obtain a unitary or antiunitary operator $U_M\colon M\to M$ such that 
\begin{equation*}
	p_A(A_M) \in \{U_M (A_M) U_M^*, U_M (A_M)^\perp U_M^*\} \qquad (A\in (P_M)^\sim \cap \Effect_{fs}(H), \; A_M\notin \Sca(M))
\end{equation*}
and
\begin{equation*}
	p_A(A_M) \in \Sca(M) \qquad (A\in (P_M)^\sim \cap \Effect_{fs}(H), \; A_M\in \Sca(M)).
\end{equation*}

Observe that this implies the following: for any pair of orthogonal unit vectors $x,y\in M$ we must have either $U_M(\C\cdot x) = \C\cdot x$ and $U_M(\C\cdot y) = \C\cdot y$, or $U_M(\C\cdot x) = \C\cdot y$ and $U_M(\C\cdot y) = \C\cdot x$.
As $U_M$ is continuous, we have either the first case for all orthogonal pairs $\C\cdot x,\C\cdot y$, or the second for every such pair.
But a similar statement holds for all two-dimensional subspaces, therefore it is easy to show that the second possibility cannot occur. 
Consequently, we have $U_M(\C\cdot x) = \C\cdot x$ for all unit vectors $x\in M$, from which it follows that $U_M$ is a scalar multiple of the identity operator.
Thus we obtain the following for every two-dimensional subspace $M$:
\begin{equation*}
	p_A(A_M) \in \{A_M, A_M^\perp\} \qquad (A\in (P_M)^\sim \cap \Effect_{fs}(H), \; A_M\notin \Sca(M)).
\end{equation*} 
From here it is rather straightforward to obtain 
\begin{equation}\label{eq:EfsPhi}
	\phi(A) = p_A(A) \in \{A, A^\perp\} \qquad (A\in \Effect_{fs}(H)\setminus \Sca(M)).
\end{equation} 

\smallskip

\emph{STEP 3:}
Observe that \eqref{eq:EfsPhi} holds for every $A\in \FiniteRank(H)$, therefore an application of Theorem \ref{thm:CoexSet} and Corollary \ref{cor:PerpCoex} completes the proof in the separable case.
As for the general case, let us consider an arbitrary effect $A\in \Effect(H)\setminus \Effect_{fs}(H)$ and an orthogonal decomposition $H = \oplus_{i\in\mathcal{I}} H_i$ such that each $H_i$ is a separable invariant subspace of $A$. 
By \eqref{eq:ProjFix} and Lemma \ref{lem:properties} (b), each $H_i$ is an invariant subspace also for $\phi(A)$, in particular, we have
\begin{equation*}
	A = \oplus_{i\in\mathcal{I}} A_i, \;\;\text{and}\;\; \phi(A) = \oplus_{i\in\mathcal{I}} \mathcal{A}_i \in \Effect(\oplus_{i\in\mathcal{I}} H_i).
\end{equation*}
Without loss of generality we may assume from now on that there exists an $i_0\in\mathcal{I}$ so that $A_{i_0}$ is not a scalar effect.

Now, let $i\in\mathcal{I}$, $F\in\FiniteRank(H)$ and $\Image F \subseteq H_{i}$ be arbitrary.
Then by \eqref{eq:EfsPhi} we have
\begin{equation*}
	A_{i} \sim P_{i}F|_{H_{i}} \;\;\iff\;\; A\sim F \;\;\iff\;\; \phi(A)\sim F \;\;\iff\;\; \mathcal{A}_{i} \sim P_{i}F|_{H_{i}}.
\end{equation*}
In particular, $A_{i}^\sim\cap\FiniteRank(H_{i}) = \mathcal{A}_{i}^\sim\cap\FiniteRank(H_{i})$, therefore by Theorem \ref{thm:CoexSet} we get that for all $i$ we have either $A_i, \mathcal{A}_i\in \Sca(H)$, or $A_i = \mathcal{A}_i$, or $\mathcal{A}_i = A_i^\perp$.
By considering $A^\perp$ instead of $A$ if necessary, we may assume that we have $A_{i_0} = \mathcal{A}_{i_0}$.
Finally, for any $i_1\in\mathcal{I}\setminus\{i_0\}$ let us consider the orthogonal decomposition $H = \oplus_{i\in\mathcal{I}\setminus\{i_0,i_1\}} H_i \oplus (H_{i_0} \oplus H_{i_1}) $.
Similarly as above, we then get $A_{i_0}\oplus A_{i_1} = \mathcal{A}_{i_0} \oplus \mathcal{A}_{i_1}$, and the proof is complete.
\end{proof}

%-------------------------------------------------------------------------------------------------------

\section{A remark on the qubit case}\label{sec:Visual}

Here we visualise the set $A^\sim\cap\FiniteRank_1(\C^2)$ for a general rank-one qubit effect $A$.
	First, let us introduce Bloch's representation.
	Consider the following vector space isomorphism between the space of all 2$\times$2 Hermitian matrices $\Bdd_{sa}(\C^2)$ and $\R^4$, see also \cite{BuS}:
	\begin{equation*}
		\rho\colon \Bdd_{sa}(\C^2) \to \R^4, \quad \rho(A) = \rho(x_0\sigma_0+x_1\sigma_1+x_2\sigma_2+x_3\sigma_3) = (x_0,x_1,x_2,x_3),
	\end{equation*}
	where
	\begin{equation*}
		\sigma_0 = \left[\begin{matrix}
			1 & 0 \\
			0 & 1
		\end{matrix}\right], \;
		\sigma_1 = \left[\begin{matrix}
			0 & 1 \\
			1 & 0
		\end{matrix}\right], \;
		\sigma_2 = \left[\begin{matrix}
			0 & -i \\
			i & 0
		\end{matrix}\right], \;
		\sigma_3 = \left[\begin{matrix}
			1 & 0 \\
			0 & -1
		\end{matrix}\right]
	\end{equation*}
	are the Pauli matrices.
	Clearly, we have $\rho(0) = (0,0,0,0)$, $\rho(I) = (1,0,0,0)$.
	The Bloch representation is usually defined as the restriction $\rho|_{\Proj_1(\C^2)}$ which maps $\Proj_1(\C^2)$ onto a sphere of the three-dimensional affine subspace $\{ (1/2,x_1,x_2,x_3) \colon x_j\in\R, j=1,2,3 \}$ with centre at $(1/2,0,0,0)$ and radius $1/2$.
	Indeed, as the general form of a rank-one projection in $\C^2$ is
	\begin{equation*}
		P_{(\cos\vartheta, e^{i\mu} \sin\vartheta)} =
		\left[ \begin{matrix}
			\cos\vartheta \\
			e^{i\mu} \sin\vartheta
		\end{matrix}\right]
		\left[ \begin{matrix}
		\cos\vartheta \\
		e^{i\mu} \sin\vartheta
		\end{matrix}\right]^*
		=
		\left[ \begin{matrix}
		\cos^2\vartheta & e^{-i\mu} \cos\vartheta \sin\vartheta \\
		e^{i\mu} \cos\vartheta \sin\vartheta & \sin^2\vartheta \\
		\end{matrix}\right]
	\end{equation*}
	where $0\leq \vartheta \leq \tfrac{\pi}{2}$ and $0\leq \mu < 2\pi$, a not too hard calculation gives that
	\begin{equation}\label{eq:ProjSphere1}
		\rho(P_{(\cos\vartheta, e^{i\mu} \sin\vartheta)}) = \tfrac{1}{2} \cdot ( 1, \cos\mu \sin 2\vartheta, \sin\mu \sin 2\vartheta, \cos 2\vartheta ).
	\end{equation}
	Recall the remarkable angle doubling property of the Bloch representation, namely, we have $\|P-Q\| = \sin\theta$ if and only if the angle between the vectors $\rho(P) - \tfrac{1}{2}e_0$ and $\rho(Q) - \tfrac{1}{2}e_0$ is exactly $2\theta$.

	Next, we call a positive (semi-definite) element of $\Bdd_{sa}(\C^2)$ a density matrix if its trace is 1, or in other words, if it is a convex combination of some rank-one projections.
	Therefore $\rho$ maps the set of all $2\times 2$ density matrices onto the closed ball of the three-dimensional affine subspace $\{ (1/2,x_1,x_2,x_3) \colon x_j\in\R, j=1,2,3 \}$ with centre at $(1/2,0,0,0)$ and radius $1/2$.
	Hence, we see that the cone of all positive (semi-definite) $2\times 2$ matrices is mapped onto the infinite cone spanned by $(0,0,0,0)$ and the aforementioned ball. 
	Thus $\rho$ maps $\Effect(\C^2)$ onto the intersection of this cone and its reflection through the point $\rho(\tfrac{1}{2}I) = (\tfrac{1}{2},0,0,0)$.
	\begin{figure}[h]
		\begin{center}
			\includegraphics[scale=0.25]{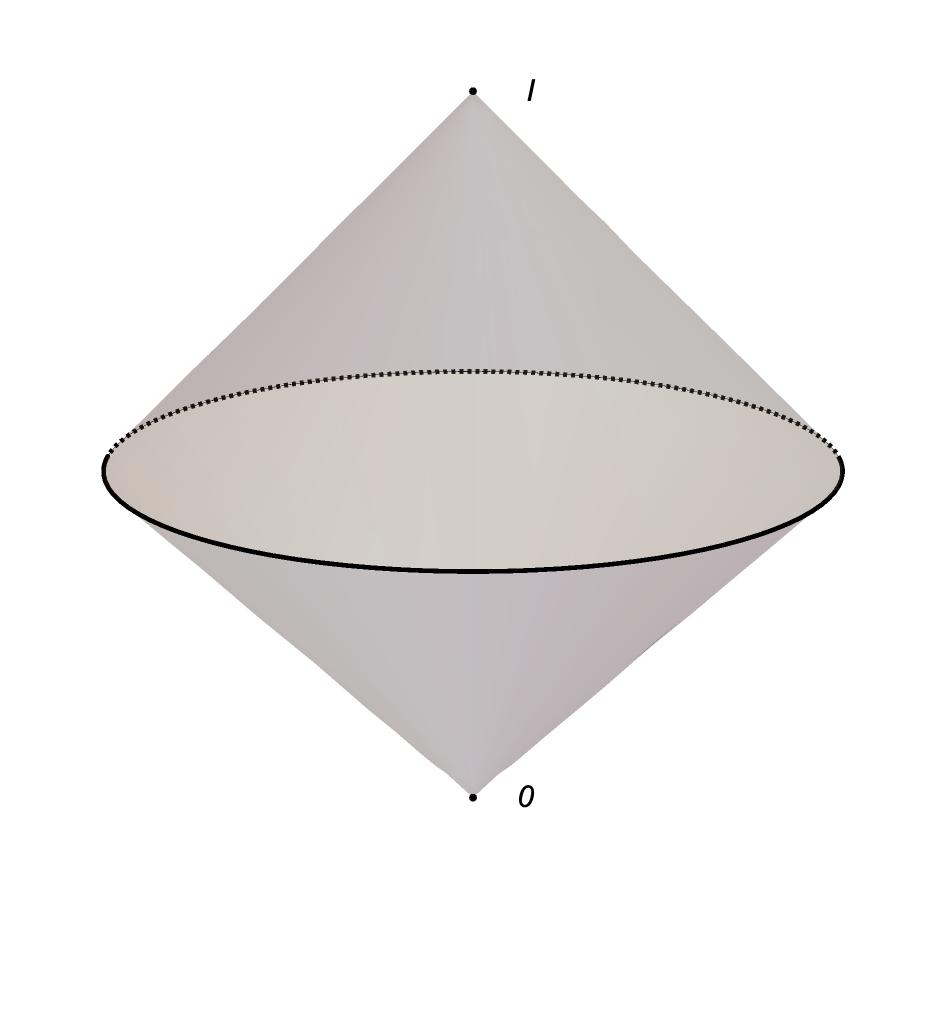}
			\caption{Illustration of $\rho(\Effect(\C^2))\cap S_\mu$. The circle is $\rho(\Proj_1(\C^2))\cap S_\mu$.}
			\label{fig:2}
		\end{center}
	\end{figure}

	We can re-write \eqref{eq:ProjSphere1} as follows:
	\begin{equation}\label{eq:ProjSphere}
		\rho(P_{(\cos\vartheta, e^{i\mu} \sin\vartheta)}) 
		= \tfrac{1}{2} \cdot ( e_0 + \sin 2\vartheta \cdot e_\mu + \cos 2\vartheta\cdot e_3),
	\end{equation}
	where
	$$
		e_0 := (1,0,0,0), \; e_\mu := (0, \cos\mu, \sin\mu, 0), \; e_3 := (0, 0, 0, 1)
	$$
	is an orthonormal system in $\mathbb{R}^4$.
	Let $S_\mu$ be the three-dimensional subspace spanned by $e_0, e_\mu, e_3$.
	Then the set $\rho(\Effect(\C^2))\cap S_\mu$ can be visualised as a double cone of $\R^3$, by regarding $e_0, e_\mu, e_3$ as the standard basis of $\R^3$, see Figure \ref{fig:2}.
	Note that $\rho(\Proj_1(\C^2))\cap S_\mu$ is the circle where the boundaries of the two cones meet.

	We continue with visualising the set $(tP_{(1, 0)})^\sim$ for an arbitrary $0<t<1$.
	Note that then visualising $(tP)^\sim$ for a general rank-one projection $P$ is very similar, we simply have to apply a unitary similarity (which by well-known properties of the Bloch representation, acts as a rotation on the sphere $\rho(\Proj_1(\C^2))$).
	Equation \eqref{eq:TA} gives the following:
	\begin{align}\label{eq:tP10F1Coex}
		&\Lambda \left( tP_{(1, 0)}, P_{(\cos\vartheta, e^{i\mu} \sin\vartheta)} \right) + \Lambda \left( I-tP_{(1,0)}, P_{(\cos\vartheta, e^{i\mu} \sin\vartheta)} \right) \nonumber \\
		&= \frac{1}{ \tfrac{1}{t} \cos^2\vartheta + \left(\tfrac{1}{0}\right) \sin^2\vartheta } + \frac{1}{ \tfrac{1}{1-t} \cos^2\vartheta + \sin^2\vartheta }
		= \left\{ \begin{matrix}
			\frac{1}{ \tfrac{1}{1-t} \cos^2\vartheta + \sin^2\vartheta } & \text{if}\; \vartheta > 0 \\
			1 & \text{if}\; \vartheta = 0 \\
		\end{matrix} \right..
	\end{align}	
	Now, let us consider the vector
	$$
		u = (2-t) \cdot e_0 + t \cdot e_3,
	$$
	which is orthogonal to 
	$$
		\rho\left((1-t)P_{(1,0)}-P_{(1,0)}^\perp\right) = -\tfrac{1}{2}\left[ t\cdot e_0 + (t-2)\cdot e_3\right].
	$$
	From here a bit tedious computation gives
	\begin{equation*}
		\left\langle u, \; \frac{1}{ \tfrac{1}{1-t} \cos^2\vartheta + \sin^2\vartheta } \cdot \rho\left( P_{(\cos\vartheta, e^{i\mu} \sin\vartheta)} \right) - \rho\left( P_{(1,0)}^\perp \right) \right\rangle = 0 \quad (0\leq \vartheta\leq \tfrac{\pi}{2}).
	\end{equation*}
	Therefore by Corollary \ref{cor:RankOneStrengthFunct} and \eqref{eq:tP10F1Coex} we conclude that $\rho\left((t P_{(1,0)})^\sim \cap \FiniteRank_1(\C^2)\right)$ is the union of the line segment $\{\rho\left(s P_{(1,0)}\right) \colon 0<s\leq 1 \} = \{ \tfrac{s}{2} e_0 + \tfrac{s}{2} e_3 \colon 0<s\leq 1 \}$ and of the area on the boundary of $\rho(\Effect(\C^2))$ which is either on, or below the affine hyperplane whose normal vector is $u$ and which contains $\rho(P_{(1,0)}^\perp)$, see Figure \ref{fig:3}.
	We note that using the notation of \eqref{eq:EtP}, the ellipse on the boundary is exactly the set 
	$$
	\left(\rho(\E_{tP_{(1,0)}})\cup \big\{(1-t)\cdot\rho(P_{(1,0)}) ,\rho(P_{(1,0)}^\perp)\big\}\right)\cap S_\mu.
	$$
	Therefore $\rho(\E_{tP_{(1,0)}})$ is a punctured ellipsoid.
	\begin{figure}[h]
		\begin{center}
			\includegraphics[scale=0.25]{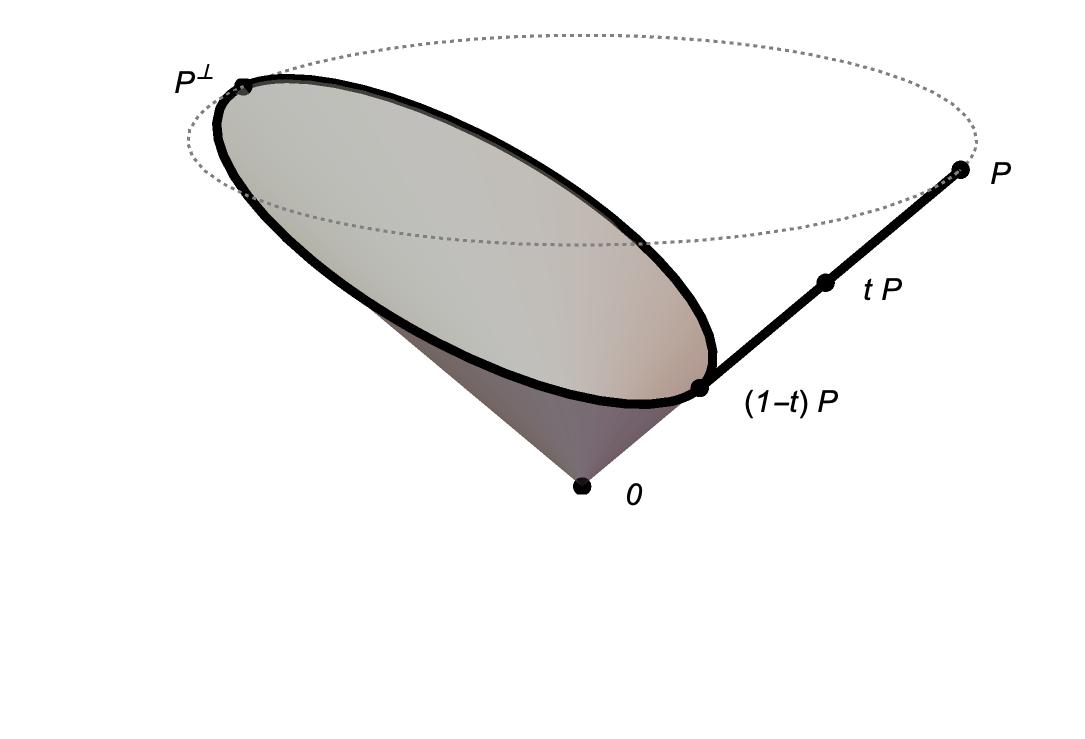}
			\caption{Illustration of $\rho\left((tP_{(1, 0)})^\sim\right)\cap\rho\left(\FiniteRank_1(\C^2)\right)\cap S_\mu$ (thick ellipse, thick line segment and the shaded area). The dotted circle is $\rho(\Proj_1(\C^2))\cap S_\mu$.}
			\label{fig:3}
		\end{center}
	\end{figure}

	If one illustrates the set $\rho\left((A)^\sim\right)\cap\rho\left(\FiniteRank_1(\C^2)\right)\cap S_\mu$ with $A, A^\perp \notin \Sca(\C^2) \cup \FiniteRank_1(\C^2)$ in the way as above, then one gets a set on the boundary of the cone which is bounded by a continuous closed curve containing the $\rho$-images of the spectral projections.

\section{Final remarks and open problems}

First, we prove the analogue of Lemma \ref{lem:2dprojchar} for finite dimensional spaces of dimension at least three.

\begin{lemma}\label{lem:E01minimal}
	Let $H$ be a Hilbert space with $2\leq \dim H < \infty$ and $A\in \Effect(H)$. 
	Then the following are equivalent:
	\begin{itemize}
		\item[\textup{(i)}] $0,1 \in \sigma(A)$,
		\item[\textup{(ii)}] there exists no effect $B\in \Effect(H)$ such that $B^\sim \subsetneq A^\sim$.
	\end{itemize}
\end{lemma}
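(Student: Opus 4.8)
The plan is to prove the two implications separately: $(\mathrm{ii})\Rightarrow(\mathrm{i})$ by contraposition, which is the easy direction, and $(\mathrm{i})\Rightarrow(\mathrm{ii})$ by reducing everything to two-dimensional diagonal blocks via Corollary \ref{cor:kulon}, where the explicit strength-function formula \eqref{eq:TA} and Lemma \ref{lem:2dprojchar} take over. For $(\mathrm{ii})\Rightarrow(\mathrm{i})$, suppose $0\notin\sigma(A)$ or $1\notin\sigma(A)$; I must exhibit $B$ with $B^\sim\subsetneq A^\sim$. If $A\in\Sca(H)$ then $A^\sim=\Effect(H)$ by Lemma \ref{lem:properties}(a), and since $\dim H\geq 2$ any non-scalar $B$ works. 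If $A\notin\Sca(H)$, then using $(A^\perp)^\sim=A^\sim$ (Corollary \ref{cor:PerpCoex}) I may assume $1\notin\sigma(A)$; set $\mu:=\max\sigma(A)\in(0,1)$ and $B:=\mu^{-1}A\in\Effect(H)$. This $B$ is non-scalar with $B\notin\{A,A^\perp\}$ (indeed $B=A$ forces $A=0$ and $B=A^\perp$ forces $A=\tfrac{\mu}{1+\mu}I$), while $A=\mu B+(1-\mu)\cdot 0$ is a convex combination of $B,B^\perp,0,I$, so $B^\sim\subseteq A^\sim$ by Lemma \ref{lem:CoexSubset}(c). Since condition (iii) of Corollary \ref{cor:CoexSet2Dim} fails for the pair $A,B$, that corollary gives $A^\sim\cap\FiniteRank_1(H)\neq B^\sim\cap\FiniteRank_1(H)$, which together with $B^\sim\subseteq A^\sim$ forces $B^\sim\subsetneq A^\sim$.

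For $(\mathrm{i})\Rightarrow(\mathrm{ii})$, assume $0,1\in\sigma(A)$ and $B^\sim\subseteq A^\sim$; I want $B^\sim=A^\sim$. By Lemma \ref{lem:CoexSubset}(b) there is a Borel $f\colon[0,1]\to[0,1]$ with $B=f(A)$, so fixing an eigenbasis $e_1,\dots,e_n$ of $A$ with $Ae_m=\alpha_m e_m$ we get $Be_m=\beta_m e_m$ with $\beta_m=f(\alpha_m)$; in particular $A$ and $B$ are simultaneously diagonal and $\alpha_m=\alpha_{m'}$ implies $\beta_m=\beta_{m'}$. Pick $p,q$ with $\alpha_p=0$, $\alpha_q=1$. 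For any indices $m,m'$, Corollary \ref{cor:kulon} applied to the decomposition $H=(\C e_m\oplus\C e_{m'})\oplus(\C e_m\oplus\C e_{m'})^\perp$ gives $\diag(\beta_m,\beta_{m'})^\sim\subseteq\diag(\alpha_m,\alpha_{m'})^\sim$ in $\Effect(\C^2)$; intersecting with $\FiniteRank_1(\C^2)$ and invoking Corollary \ref{cor:RankOneStrengthFunct} and \eqref{eq:DiagStrength}, this is equivalent to $T_{\diag(\beta_m,\beta_{m'})}(\alpha)\leq T_{\diag(\alpha_m,\alpha_{m'})}(\alpha)$ for all $\alpha\in[0,\tfrac{\pi}{2}]$, in the notation of the proof of Lemma \ref{lem:BlockDiagSca}. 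Taking $m=p$, $m'=q$: since $\diag(0,1)\in\Proj_1(\C^2)$, Lemma \ref{lem:2dprojchar} says its coexistence set is minimal, so the inclusion is an equality, and Corollary \ref{cor:CoexSet2Dim} then yields $(\beta_p,\beta_q)\in\{(0,1),(1,0)\}$. Replacing $B$ by $B^\perp=(1-f)(A)$ if necessary — this leaves $B^\sim$ unchanged and merely swaps the target conclusion $B=A$ for $B=A^\perp$ — I may assume $\beta_p=0$ and $\beta_q=1$.

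It then remains to show $\beta_m=\alpha_m$ for every $m$. For $\alpha_m\in\{0,1\}$ this holds because $\beta_m=f(\alpha_m)$ equals $\beta_p=0$ or $\beta_q=1$ accordingly. For $\alpha_m\in(0,1)$: first $\beta_m\notin\{0,1\}$, since $\beta_m=0$ would give $\Effect(\C^2)=\diag(0,0)^\sim\subseteq\diag(0,\alpha_m)^\sim$, contradicting that $\diag(0,\alpha_m)$ is non-scalar (Lemma \ref{lem:properties}(a)), and $\beta_m=1$ is excluded identically via $\C e_q\oplus\C e_m$; then, with $\alpha_m,\beta_m\in(0,1)$, formula \eqref{eq:TA} gives $T_{\diag(0,\beta_m)}(\alpha)=(\cos^2\alpha+\tfrac{1}{1-\beta_m}\sin^2\alpha)^{-1}$ for $0<\alpha<\tfrac{\pi}{2}$ (and likewise with $\alpha_m$), so $T_{\diag(0,\beta_m)}\leq T_{\diag(0,\alpha_m)}$ reduces to $\beta_m\geq\alpha_m$, while $T_{\diag(1,\beta_m)}(\alpha)=(\cos^2\alpha+\tfrac{1}{\beta_m}\sin^2\alpha)^{-1}$ for $0<\alpha<\tfrac{\pi}{2}$ and $T_{\diag(1,\beta_m)}\leq T_{\diag(1,\alpha_m)}$ gives $\beta_m\leq\alpha_m$. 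Hence $\beta_m=\alpha_m$ for all $m$, so $B=A$ for the possibly-replaced $B$, i.e. the original $B$ lies in $\{A,A^\perp\}$ and $B^\sim=A^\sim$. The only genuinely fiddly step is this last one — keeping the two-by-two strength functions straight and extracting the opposed inequalities $\beta_m\geq\alpha_m$ and $\beta_m\leq\alpha_m$ from the eigenvalues $0$ and $1$ of $A$; everything else is a direct application of results already proved. (When $\dim H=2$ there are no indices with $\alpha_m\in(0,1)$, and the lemma reduces to Lemma \ref{lem:2dprojchar}.)
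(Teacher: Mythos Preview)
Your proof is correct and follows essentially the same architecture as the paper's: for $(\mathrm{ii})\Rightarrow(\mathrm{i})$ both scale $A$ to produce the counterexample $B$, and for $(\mathrm{i})\Rightarrow(\mathrm{ii})$ both diagonalise $A$ and $B$ simultaneously, pass to the $2\times 2$ diagonal blocks via Corollary~\ref{cor:kulon}, pin down the $(0,1)$-block first, and then squeeze each remaining eigenvalue between two opposed inequalities. The only substantive difference is in that last squeeze: the paper invokes Lemma~\ref{lem:F1order} (the ``moreover'' clause) as a black box to read off $\mu_j\geq\lambda_j$ from the block with eigenvalue $0$ and $\mu_i\leq\lambda_i$ from the block with eigenvalue $1$, whereas you unpack the strength-function formula~\eqref{eq:TA} directly---effectively inlining the relevant piece of the proof of Lemma~\ref{lem:F1order}. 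Your use of Lemma~\ref{lem:CoexSubset}(b) to get $B=f(A)$ (rather than just $B\in A''$) is a nice touch, since it cleanly disposes of the repeated-eigenvalue cases $\alpha_m\in\{0,1\}$ that the paper glosses over.
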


\begin{proof}
	If $\dim H = 2$, then (i)$\iff$(ii) was proved in Lemma \ref{lem:2dprojchar}, so from now on we will assume $2 < \dim H < \infty$.
	Also, as the case when $A\in\Sca(H)$ is trivial, we assume otherwise throughout the proof.
	
	\emph{(i)$\Longrightarrow$(ii):}
	Suppose that $0,1 \in \sigma(A)$ and consider an arbitrary effect $B$ with $B^\sim \subseteq A^\sim$.
	By Lemma \ref{lem:CoexSubset}, $A$ and $B$ commute.
	If $0 = \lambda_1 \leq \lambda_2 \leq \dots \leq \lambda_{n-1} \leq \lambda_n = 1$ are the eigenvalues of $A$, then the matrices of $A$ and $B$ written in an orthonormal basis of joint eigenvectors are the following:
	$$
		A = \left[\begin{matrix}
				0 & 0 & \dots & 0 & 0 \\
				0 & \lambda_2 & \dots & 0 & 0 \\
				\vdots & & \ddots & & \vdots \\
				0 & 0 & \dots & \lambda_{n-1} & 0 \\
				0 & 0 & \dots & 0 & 1 \\
			\end{matrix}\right]
		\quad \text{and} \quad
		B = \left[\begin{matrix}
				\mu_1 & 0 & \dots & 0 & 0 \\
				0 & \mu_2 & \dots & 0 & 0 \\
				\vdots & & \ddots & & \vdots \\
				0 & 0 & \dots & \mu_{n-1} & 0 \\
				0 & 0 & \dots & 0 & \mu_n \\
			\end{matrix}\right]
	$$
	with some $\mu_1,\dots \mu_n \in [0,1]$.
	Notice that by Corollary \ref{cor:kulon}, for all $1\leq i < j \leq n$ we have
	\begin{equation}\label{eq:2dsiminlcusion}
		\left[\begin{matrix}
			\mu_i & 0 \\
			0 & \mu_j
		\end{matrix}\right]^\sim
		\subseteq
		\left[\begin{matrix}
			\lambda_i & 0 \\
			0 & \lambda_j
		\end{matrix}\right]^\sim.
	\end{equation}
	In particular, choosing $i = 1, j = n$ implies either $\mu_1 = 0$ and $\mu_n = 1$, or $\mu_1 = 1$ and $\mu_n = 0$.
	Assume the first case. 
	If we set $i=1$, then Lemma \ref{lem:F1order} and \eqref{eq:2dsiminlcusion} imply $\mu_j \geq \lambda_j$ for all $j = 2,\dots, n-1$.
	But on the other hand, setting $j=n$ implies $\mu_i \leq \lambda_i$ for all $i = 2,\dots, n-1$.
	Therefore we conclude $B = A$.
	Similarly, assuming the second case implies $B=A^\perp$.
	
	\emph{(ii)$\Longrightarrow$(i): }
	Assume (i) does not hold, then there exists a positive number $\varepsilon$ such that $\sigma (A) \subseteq [0,1-\varepsilon]$ or $\sigma (A^\perp) \subseteq [0,1-\varepsilon]$.
	Suppose the first possibility holds, then $\tfrac{1}{1-\varepsilon}A \notin \{A,A^\perp\}$ and $\left(\tfrac{1}{1-\varepsilon}A\right)^\sim \subseteq A^\sim$.
	The second case is very similar.
\end{proof}

We only proved the above lemma and Corollary \ref{cor:CoexSet2Dim} in the finite dimensional case. 
The following two questions would be interesting to examine:

\begin{question}
Does the statement of Corollary \ref{cor:CoexSet2Dim} remain true for general infinite dimensional Hilbert spaces?
\end{question}

\begin{question}
Does the statement of Lemma \ref{lem:E01minimal} hold if $\dim H \geq \aleph_0$?
\end{question}

Finally, our first main theorem characterises completely when $A^\sim = B^\sim$ happens for two effects $A$ and $B$. 
However, we gave only some partial results about when $A^\sim \subseteq B^\sim$ occurs, e.g.~Lemma \ref{lem:CoexSubset}.

\begin{question}
How can we characterise the relation $A^\sim \subseteq B^\sim$ for effects $A,B$?
\end{question}

We believe that a complete answer to this latter question would represent a substantial step towards the better understanding of coexistence.

\end{document}